\newtheorem{theorem}{Theorem}[section]
\newtheorem{corollary}[theorem]{Corollary}
\newtheorem{lemma}[theorem]{Lemma}
\newtheorem{proposition}[theorem]{Proposition}
\theoremstyle{definition}
\numberwithin{equation}{section}
\newcommand\pref[1]{~(\ref{#1})}
\def \bC {\mathbb C}
\def \bD {\mathbb D}
\def \bH {\mathbb H}
\def \bN {\mathbb N}
\def \bR {\mathbb R}
\def \bR {\mathbb R}
\def \bR {\mathbb R}
\def \bZ {\mathbb Z}
\def \DD {\mathbf D}
\def \cC {\mathcal C}
\def \cD {\mathcal D}
\def \cG {\mathcal G}
\def \cH {\mathcal H}
\def \cP {\mathcal P}
\def \cS {\mathcal S}
\def \cW {\mathcal W}
\def \fg {\mathfrak g}
\def \fk {\mathfrak k}
\def \fo {\mathfrak o}
\def \fs {\mathfrak s}
\def \fu {\mathfrak u}
\def \fU {\mathfrak U}
\def \RE {\text{\rm Re}\,}
\def \IM {\text{\rm Im}\,}
\def \al {\alpha}
\def \la {\lambda}
\def \ph {\varphi}
\def \eps {\varepsilon}
\def \lan {\langle}
\def \ran {\rangle}
\def \de {\partial}
\def \spaz {\text{\rm span\,}}
\def \half{\frac12}
\def \inv{^{-1}}
\def \deg {\text{\rm deg\,}}
\def \tr {\text{\rm tr\,}}
\def\be{\begin{equation}}
\def\ee{\end{equation}}
\def\bes{\begin{equation*}}
\def\ees{\end{equation*}}
\def\bea{\begin{equation}\begin{aligned}}
\def\eea{\end{aligned}\end{equation}}
\def\beas{\begin{equation*}\begin{aligned}}
\def\eeas{\end{aligned}\end{equation*}}
\newcommand\grpSU{{SU_2}}
\newcommand\grpSO{{SO}}
\newcommand\base{{o}} 
\newcommand\baseuno {X_1}
\newcommand\als[1]{\begin{align*} #1 \end{align*}}
\newcommand\diagonali{\mathcal{B}}
\newcommand\spaziorp{\mathcal{V}}
\newcommand\spaziorpU{\mathcal{U}}
\newcommand\spaziW{\mathcal{W}}
\newcommand\fine{a}
\newcommand\finetau{{\fine_\tau}}
\newcommand\enne{\nu} 
\newcommand\emme{\mu}
\newcommand\Btau{B_\tau}
\newcommand\Smatr{\cS\big(\bR^n,{\rm End}(\spaziorp_\tau)\big)^K}
\newcommand\SmatrH{\cS\big(H,{\rm End}(\spaziorp_\tau)\big)^K}
\newcommand\SmatrW{\cS(\bR^n,\spaziW_\tau^\ell)^K}
\newcommand\LmatrH{L^1\big(H,{\rm End}(\spaziorp_\tau)\big)^K}
\newcommand\LduematrH{L^2\big(H,{\rm End}(\spaziorp_\tau)\big)^K}
\newcommand\Stau{\cS(G)_{\tau}^{{\rm Int}(K)}}
\newcommand\ind{d} 
\newcommand\der{m} 
\newcommand\undue{\kappa}
\newcommand\Fo{F_-} 
\newcommand\Fe{F_+} 
\newcommand\ro{\rho} 
\def\aut#1{\beta_{#1,\tau}} 
\def\auttau#1{\beta_{#1}} 
\def\mass{\omega}
\title[]{Schwartz correspondence\\ for real motion groups in low dimensions}
\author{Francesca Astengo}
\address{Dipartimento di Matematica, Universit\`a di Genova, Via Dodecaneso 35, 16146 Genova, Italy} 
\email{{\tt astengo@dima.unige.it}}
\author{Bianca Di Blasio}
\address{Dipartimento di Matematica e Applicazioni, Universit\`a di Milano Bicocca, Via Cozzi 53, 20125 Milano, Italy } 
\email{{\tt bianca.diblasio@unimib.it}}
\author{Fulvio Ricci}
\address{Scuola Normale Superiore, Piazza dei Cavalieri
7, 56126 Pisa, Italy } 
\email{{\tt fulvio.ricci@sns.it}}
\subjclass[2010]{43A85, 43A90, 22E30}                         
\keywords{Lie groups, groups of polynomial growth, Gelfand pairs, spherical transform, Schwartz space}
\begin{document}

\begin{abstract}

 For a Gelfand pair $(G,K)$ with $G$ a Lie group of polynomial growth and $K$ a compact subgroup, the {\it Schwartz correspondence} states that the spherical transform maps the bi-$K$-invariant Schwartz space ${\mathcal S}(K\backslash G/K)$ isomorphically onto the space ${\mathcal S}(\Sigma_{\mathcal D})$, where $\Sigma_{\mathcal D}$ is an embedded copy of the Gelfand spectrum in ${\mathbb R}^\ell$, canonically associated to a generating system ${\mathcal D}$ of $G$-invariant differential operators on $G/K$, and ${\mathcal S}(\Sigma_{\mathcal D})$ consists of restrictions to $\Sigma_{\mathcal D}$ of Schwartz functions on ${\mathbb R}^\ell$.

Schwartz correspondence is known to hold for a large variety of Gelfand pairs of polynomial growth. In this paper we prove that it holds for the strong Gelfand pair $(M_n,SO_n)$ with $n=3,4$.
The rather trivial case $n=2$ is included in previous work by the same authors. 
\end{abstract}

\maketitle

\baselineskip15pt

\section{Introduction}
Let $(G,K)$ be a Gelfand pair, with $G$ a connected Lie group and $K$ a compact subgroup of it. 
By definition, this means that the convolution algebra $L^1(K\backslash G/K)$ of bi-$K$-invariant integrable functions on $G$, i.e. satisfying
\be\label{bi-K}
f(k_1gk_2)=f(g)\ ,\qquad \forall\,g\in G\ ,\quad k_1,k_2\in K\ ,
\ee 
is commutative, or, equivalently, that the composition algebra $\bD(G/K)$ of $G$-invariant differential operators on $G/K$ is commutative.

 The Gelfand spectrum $\Sigma$ of $L^1(K\backslash G/K)$ is the space of bounded spherical functions on $G$ with the topology induced by the weak* topology on $L^\infty(K\backslash G/K)$. For each choice of a finite generating subset $\cD=\{D_1,\dots, D_\ell\}$ of $\bD(G/K)$, $\Sigma$ can be homeomorphically embedded into $\bC^\ell$, by assigning to each spherical function $\ph\in\Sigma$  the $\ell$-tuple $\xi=(\xi_1,\dots,\xi_\ell)$  if $D_j\ph=\xi_j\ph$ for $j=1,\dots,\ell$. The image $\Sigma_\cD$ of this embedding is a closed subset of $\bC^\ell$ \cite{FR}. 
 
Denoting by $\ph_\xi$ the spherical function associated with $\xi\in\Sigma_\cD$, the spherical transform $\cG f$ of $f\in  L^1(K\backslash G/K)$ can then be regarded as a map defined on $\Sigma_\cD$ by
$$
\cG f(\xi)=\int_Gf(x)\,\ph_\xi(x^{-1})\,dx\ .
$$

\medskip

If $G$ has polynomial volume growth (in which case we also say that the pair $(G,K)$ has polynomial growth) and the chosen generators $D_j\in\cD$ are  symmetric,
the eigenvalues are real, so that $\Sigma_\cD\subset\bR^\ell$. We refer to \cite{ADR3} for a presentation of Gelfand pairs of polynomial growth and the proofs of various preliminary results that will be used in this paper.

Gelfand pairs were originally introduced in the context of spherical analysis on symmetric spaces \cite{Ge}. The interest in pairs of polynomial growth, and in particular of the nilpotent Gelfand pairs mentioned below, is more recent \cite{BeJeRa1, BeJeRa2, KaRi, La1, La2, Ri, Vin, W, Y}.

We say that  {\it Schwartz correspondence} holds for a Gelfand pair $(G,K)$ of polynomial growth if the following property is satisfied:
\begin{enumerate}
\item[(S)] {\it The spherical transform maps the bi-$K$-invariant Schwartz space $\cS(K\backslash G/K)$ isomorphically onto the space $\cS(\Sigma_\cD)$ of restrictions to $\Sigma_\cD$ of Schwartz functions on~$\bR^\ell$.}
\end{enumerate}

This is an intrinsic property of the pair because it does not depend on the choice of the generating system $\cD$ \cite{ADR2, ADR3, FRY1}. 

The range of Gelfand pairs for which property (S) has been proved includes those with  $G$  compact (compact pairs) \cite{ADR3},  various families with $G=K\ltimes H$ and $H$ nilpotent (nilpotent pairs) \cite{ADR1, ADR2, FiR, FRY1, FRY2}, and those with $G=K\ltimes H$ and $K$ abelian  \cite[Thm. 8.1]{ADR3}.
No example in which condition (S) fails has been found so far.

An interesting class of Gelfand pairs for which Schwartz correspondence has not been established yet consists of the {\it strong Gelfand pairs}.

Denoting by ${\rm Int}(K)$ the group of inner automorphisms of $G$ induced by  elements of $K$ and calling $K$-central a function on $G$ that is ${\rm Int}(K)$-invariant,  the pair $(G,K)$ is called a  strong Gelfand pair if the algebra $L^1(G)^{{\rm Int}(K)}$ of $K$-central integrable functions is commutative.
The term ``strong'' is then justified by the obvious fact that bi-$K$-invariant functions are $K$-central.

It is easy to verify that $L^1(G)^{{\rm Int}(K)}\approx L^1\big(K\backslash (K\ltimes_{\rm Int} G)/K\big)$, so that  $(G,K)$ is strong Gelfand  if and only if $(K\ltimes_{\rm Int} G,K)$ is Gelfand.

According to a general structure of Gelfand pairs due to Vinberg 
\cite[Thm.~13.3.20]{W}, Gelfand pairs  $(G,K)$ must have $G=L\ltimes H$, with $H$~nilpotent of step $\le2$ and $L\supseteq K$ orbit-equivalent to $K$. The case of a strong Gelfand pairs corresponds to $L= K\ltimes_{\rm Int} K$.

A careful analysis of the classification in \cite{Y} shows that the non-compact strong Gelfand pairs $(G,K)$  with polynomial growth have $G=K\ltimes H$ and one of the following forms:
\begin{itemize} 
\item $K=SO_n$ and $H=\bR^n$, 
\item $K=U_n$ and $H=\bC^n$ or  the Heisenberg group $H_n$,
\item  direct products of the above.
\end{itemize}

Objects and notions given above for general Gelfand pairs can be reformulated in the following terms for a strong Gelfand pair $(G,K)$, cf. \cite[Sect. 2.1]{ADR3}:
\begin{itemize}
\item $\bD(G)^{{\rm Int}(K)}$ is the algebra of $K$-central differential operators on $G$,
\item the spherical functions are the $K$-central eigenfunction of all operators in  $\bD(G)^{{\rm Int}(K)}$,
normalized so that they take value $1$ at the identity.
\end{itemize} 

Then the Gelfand spectrum $\Sigma$ consists of the bounded spherical functions and,
given a finite generating system $\cD$ of $\bD(G)^{{\rm Int}(K)}$, the embedded copy $\Sigma_\cD$ of  $\Sigma$ is defined as the set of $({\rm dim}\cD)$-tuples of eigenvalues like in the general case, and the spherical transform is defined as in \eqref{bi-K}.
The formulation of Schwartz correspondence for strong Gelfand pairs with polynomial growth is the same as in (S), only with $\cS(K\backslash G/K)$ replaced by $\cS(G)^{{\rm Int}(K)}$.

The cases in which $K$ is abelian are already covered by the positive results in \cite{ADR3}.
The authors have recently proved property (S) for the strong pair $\big(M_2(\bC),U_2\big)$, where $M_2(\bC)=U_2\ltimes\bC^2$ is the complex motion group in two dimensions \cite{ADR4}. 

In this paper we consider the strong pairs $\big(M_n(\bR),SO_n\big)$ with $n=3,4$
and prove the following.
\begin{theorem} \label{main}
The strong Gelfand pairs $\big(M_n(\bR),SO_n\big)$, with $n=3,4$, satisfy property {\rm(S)},  i.e., the spherical transform maps $\cS(M_n(\bR))^{{\rm Int}(SO_n)}$  isomorphically onto $\cS(\Sigma_\cD)$.
\end{theorem}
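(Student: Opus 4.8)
The strategy is the one that has proved successful for previous strong Gelfand pairs of the motion-group type, adapted to the real orthogonal case. First I would realize $\bD(M_n(\bR))^{{\rm Int}(SO_n)}$ explicitly: since $M_n(\bR)=SO_n\ltimes\bR^n$, the $SO_n$-central differential operators decompose according to the action on $\bR^n$ and on the group $SO_n$. A convenient generating system $\cD$ is obtained from the $SO_n$-invariant operators on $\bR^n$ (powers of the Laplacian $\Delta$, or just $\Delta$ and $|x|^2$ — more precisely the radial operators generated by $\Delta$ and the Euler field together with the Casimir-type invariants), the Casimir operators of $\liesu_n$ (i.e. of $SO_n$), and the ``mixed'' operators built from the infinitesimal rotation vector fields paired with the translation generators. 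For $n=3$ one is in the familiar $(SO_3,\bR^3)$ setting where the spherical functions are parametrized by a radial parameter and a spin/angular-momentum parameter, and for $n=4$ one uses $SO_4\sim (SU_2\times SU_2)/\pm$, so the relevant representation theory is completely explicit. The first substantive step is thus to write down $\Sigma_\cD\subset\bR^\ell$ as a semialgebraic set, identify its natural stratification (the generic stratum, plus lower-dimensional strata coming from the discrete spin parameters and from the origin in the $\bR^n$-spectrum), and record the asymptotic geometry of $\Sigma_\cD$ near infinity.

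Next I would set up the two halves of the isomorphism. For the ``easy'' direction — that $\cG$ maps $\cS(M_n(\bR))^{{\rm Int}(SO_n)}$ continuously into $\cS(\Sigma_\cD)$ — the plan is to use the eigenvalue relations $D_j\ph_\xi=\xi_j\ph_\xi$ to turn multiplication of $\cG f$ by polynomials in $\xi$ into the spherical transform of $D_jf$, and decay of derivatives of $\cG f$ into the spherical transform of functions obtained by applying the (finitely many, $SO_n$-central) fundamental vector fields; combined with the description of $\Sigma_\cD$ this yields Schwartz estimates on each stratum, which by the general extension machinery for $\cS(\Sigma_\cD)$ (the characterization via restrictions of Schwartz functions on $\bR^\ell$, as in \cite{ADR3, FRY1}) assemble to an element of $\cS(\Sigma_\cD)$. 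For the inverse direction I would proceed via an $L^2$ Plancherel/inversion formula for the $SO_n$-central spherical transform — which follows from decomposing $L^2(M_n(\bR))^{{\rm Int}(SO_n)}$ over the unitary dual, the irreducibles of $M_n(\bR)$ being induced from characters of $\bR^n$ and organized by the $SO_{n-1}$-orbit structure on spheres — and then show that the inverse transform of a Schwartz function on $\Sigma_\cD$ is $SO_n$-central and Schwartz on $M_n(\bR)$.

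The heart of the matter, and the step I expect to be the main obstacle, is proving that the inverse spherical transform sends $\cS(\Sigma_\cD)$ into $\cS(M_n(\bR))^{{\rm Int}(SO_n)}$ with continuous estimates. The difficulty is the usual one for these problems: the spherical functions $\ph_\xi$ are not uniformly nice in the spectral parameter near the ``singular'' strata of $\Sigma_\cD$ (where the discrete spin parameter interacts with the continuous radial parameter, and especially near the origin of the $\bR^n$-spectrum and at the junctions between strata), so one must control how a Schwartz function that extends smoothly across $\Sigma_\cD$ in $\bR^\ell$ interacts with the non-smooth gluing of the spherical functions. The plan is to handle this by writing the inverse transform as an oscillatory-type integral, integrating by parts using the differential operators in $\cD$ to gain decay in the group variable, and using the explicit recursion relations between spherical functions with neighbouring spin parameters (Clebsch–Gordan / contiguity relations for $SO_3$ and $SO_4$) to show that Schwartz decay across the strata is exactly what is needed to sum the spin series and obtain a Schwartz function on the group. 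For $n=4$ the doubled $SU_2$ structure makes these recursions more involved but still completely explicit, and the polynomial growth of $M_n(\bR)$ guarantees that the a priori bounds needed for the interchange of sum and integral hold. Once continuity in both directions is established, the two maps are mutually inverse on the dense subspace of compactly supported bi-invariant data, hence everywhere, proving Theorem~\ref{main}.
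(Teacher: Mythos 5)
Your plan correctly identifies the $K$-type reduction and the explicit representation theory of $SO_3$ and $SO_4 \cong (SU_2\times SU_2)/\pm$ as the right ingredients, but the proposal misallocates the difficulty between the two directions, and consequently has no concrete strategy for the actual obstruction. What you call the ``easy'' direction --- that $\cG f$ lies in $\cS(\Sigma_\cD)$ --- is the whole content of the theorem: there is no ``general extension machinery'' that turns Schwartz bounds on the separate strata into a Schwartz extension to $\bR^\ell$. The strata are the half-parabolas ($n=3$) or half-lines ($n=4$) of $\Sigma^\tau_\cD$, and they all meet at the origin; the obstruction is precisely whether the restrictions of $\cG f_\tau$ to these curves share a common formal Taylor jet there, with coefficients bounded so the extensions decay rapidly in $\tau$. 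Conversely, what you call the ``heart of the matter'' --- continuity of the inverse spherical transform on Schwartz spaces --- is in fact given for free by Martini's multiplier theorem (\cite[Prop.~4.2.1]{Mar1}, quoted in the paper as Proposition~\ref{nucleo}), together with the open mapping theorem; one can disregard continuity of $\cG$ entirely and only needs surjectivity of $\cG^{-1}$.

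The paper's route through the obstruction you are glossing over is: (a) via the isomorphism $A_\tau$, reduce to $\mathrm{End}(\spaziorp_\tau)$-valued equivariant functions, and show that the algebra $\big(\bD(\bR^n)\otimes\mathrm{End}(\spaziorp_\tau)\big)^{SO_n}$ is generated over the scalar invariant operators by a single element $\DD_\tau$, which yields the decomposition $F=\sum_j\DD_\tau^j g_j$ with scalar radial Schwartz $g_j$ (Proposition~\ref{sommaF2}); this, combined with Hadamard division and Schwartz--Mather, already gives a Schwartz extension for each fixed $\tau$ (Corollary~\ref{estensioneGn}). Then (b) the rapid decay in $\tau$ required by condition (S') of Theorem~\ref{rendiconti} is obtained by directly estimating the unknown Taylor coefficients of the extension: the derivatives of $\cG_\tau f_\tau$ along the spectral curves impose linear systems with Vandermonde-type matrices in the eigenvalues $\auttau{s,\tau}$, and the key lemma (\ref{traslazione} for $n=4$, a parity splitting for $n=3$) bounds the relevant Schur-polynomial cofactor ratios by $1$. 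Your proposal of oscillatory integrals and Clebsch--Gordan contiguity relations does not engage with either (a) or (b), and the closing ``mutually inverse on a dense subspace'' argument is circular: it presupposes the very continuity of $\cG$ into $\cS(\Sigma_\cD)$ that needs to be proved.
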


\medskip

One important tool in the analysis of $K$-central functions for general strong Gelfand pairs is  decomposition into $K$-types.

Given $\tau\in\widehat K$,  let  $f_\tau$ be the $K$-type component of $f\in L^1(G)^{{\rm Int}(K)} $  defined by
\be\label{tipi}
f_\tau(g)=f*_K(d_\tau\overline{\chi_\tau})(g)=(d_\tau\overline{\chi_\tau} )*_Kf(g)=\int_K f(gk\inv)(d_\tau\overline{\chi_\tau})(k)\,dk\ ,
\ee
where $d_\tau$ and $\chi_\tau$ are character and dimension of $\tau$, respectively.
We  say that $f$ has $K$-type $\tau$ if $f_\tau=f$. The algebra  $L^1(G)^{{\rm Int}(K)}$ decomposes as the direct sum of the subalgebras $L^1(G)_\tau^{{\rm Int}(K)}$ of $K$-central functions of $K$-type $\tau$. Morever, for $\tau\ne\tau'$, 
\be\label{orthogonality}
L^1(G)_\tau^{{\rm Int}(K)}*L^1(G)_{\tau'}^{{\rm Int}(K)}=\{0\}\ ,
\ee
and, denoting by $\tau_0$ the trivial representation, $L^1(G)_{\tau_0}^{{\rm Int}(K)}=L^1(K\backslash G/K)$.

It follows that $(G,K)$ is a strong Gelfand pair if and only $(G,K,\tau)$ is a {\it commutative triple} (i.e., $L^1(G)_\tau^{{\rm Int}(K)}$ is commutative) for every $\tau\in\widehat K$~\cite{RS}. If this is the case, then $\Sigma$ is the disjoint union of the spectra $\Sigma^\tau$ of $L^1(G)_\tau^{{\rm Int}(K)}$, consisting of the bounded spherical function that are of type $\tau$.

We can then extend the notion of Schwartz correspondence to the commutative triple $(G,K,\tau)$ as follows.
\begin{enumerate}
\item[(${\rm S}_\tau$)] {\it The $\tau$-spherical transform maps  $\cS(G)^{{\rm Int}(K)}_\tau$ isomorphically onto the space $\cS(\Sigma^\tau_\cD)$,
where $\Sigma^\tau_\cD$ is the Gelfand spectrum of $L^1(G)_\tau^{{\rm Int}(K)}$.
}
\end{enumerate}

It is quite clear that, if the strong Gelfand pair $(G,K)$ satisfies property (S), then property (${\rm S}_\tau$) is satisfied for every $\tau\in\widehat K$. Conversely, if we assume that property (${\rm S}_\tau$) has been proved for every $\tau$, it is still necessary  to take into account the dependence  on $\tau$ of the Schwartz norm estimates involved. The following reduction principle of property (S) to single $K$-types has been used in~\cite{ADR4} for the strong Gelfand pair $\big(M_2(\bC),U_2\big)$ and will be used here to prove Theorem \ref{main}.

\begin{theorem}[{\cite[Prop. 5.2 and Thm. 7.1]{ADR3}}]\label{rendiconti}
Property {\rm  (S)} holds for a strong Gelfand pair $(G,K)$ of polynomial growth if and only if the following condition is satisfied:
\begin{enumerate}
\item[\rm(S')] given $f\in \cS(G)^{{\rm Int}(K)}$ and $N\in\bN$, for each $K$-type component $f_\tau$ of $f$, $\tau\in\widehat K$,   $\cG f_\tau$ admits a Schwartz extension $u_{\tau,N}$ such that $\|u_{\tau,N}\|_{(N)}$ is rapidly decaying in $\tau$.
\end{enumerate}
\end{theorem}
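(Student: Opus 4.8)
The plan is to establish the two implications separately; ${\rm(S)}\Rightarrow{\rm(S')}$ is soft, whereas ${\rm(S')}\Rightarrow{\rm(S)}$ carries the content. Since property {\rm(S)} is independent of the generating system $\cD$, I would first enlarge $\cD$ to a generating system that contains a set of generators of the centre $Z\big(U(\fk_\bC)\big)$, realised as $K$-central differential operators on $G$. On a spherical function of type $\tau$ these act by the infinitesimal character of $\tau$, and since distinct $\tau\in\widehat K$ have pairwise distinct infinitesimal characters, the corresponding block of coordinates defines a proper injective map $\tau\mapsto p_\tau\in\bR^j$ with uniformly discrete image and $|p_\tau|\asymp\langle\tau\rangle$, $\langle\tau\rangle$ being the norm of the highest weight of $\tau$. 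Writing $\pi\colon\bR^\ell\to\bR^j$ for the projection onto this block, one has $\pi(\Sigma^\tau_\cD)=\{p_\tau\}$, so the strata $\Sigma^\tau_\cD$ lie in pairwise disjoint open slabs $\pi\inv\big(B(p_\tau,\delta)\big)$. I then fix once and for all cut-offs $\psi_\tau=\chi_0\big(\delta\inv(\pi(\cdot)-p_\tau)\big)$, with $\chi_0\in C^\infty_c(\bR^j)$ equal to $1$ near the origin and supported in the unit ball, so that $\psi_\tau\equiv1$ on a neighbourhood of $\Sigma^\tau_\cD$, the supports of the $\psi_\tau$ are mutually disjoint, and $\sup_\tau\|\psi_\tau\|_{C^k}<\infty$ for every $k$.

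For ${\rm(S)}\Rightarrow{\rm(S')}$: continuity of $\cG$ provides, for each $N$, an index $N'$ and a Schwartz extension $u_{\tau,N}$ of $\cG f_\tau$ with $\|u_{\tau,N}\|_{(N)}\lesssim\|f_\tau\|_{(N')}$, so it suffices to show that, for fixed $f\in\cS(G)^{{\rm Int}(K)}$ and fixed $N'$, the norm $\|f_\tau\|_{(N')}$ is rapidly decreasing in $\tau$. This is the standard estimate expressing smoothness along $K$: if $\Omega$ denotes the Casimir of $K$ realised as a $K$-central second-order operator on $G$, then $c_\tau^m f_\tau=(\Omega^m f)_\tau$ with $c_\tau\asymp\langle\tau\rangle^2\to\infty$; since $\Omega^m$ maps $\cS(G)^{{\rm Int}(K)}$ continuously into itself with the expected loss of derivatives, and $\|g_\tau\|_{(N')}\lesssim d_\tau\|g\|_{(N')}$ uniformly in $\tau$, taking $m$ large yields $\|f_\tau\|_{(N')}\lesssim_M(1+\langle\tau\rangle)^{-M}\|f\|_{(N'+2m)}$ for every $M$.

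For ${\rm(S')}\Rightarrow{\rm(S)}$ I would argue in four steps. \emph{(i)} $\cG$ maps $\cS(G)^{{\rm Int}(K)}$ continuously into $\cS(\Sigma_\cD)$: the functions $u_{\tau,n}\psi_\tau$ have mutually disjoint supports, $\cG f_\tau$ is carried by $\Sigma^\tau_\cD$ (by the $K$-type orthogonality), and $\cG f=\sum_\tau\cG f_\tau$, so for any choice of $\tau\mapsto n(\tau)$ the function $v=\sum_\tau u_{\tau,n(\tau)}\psi_\tau$ restricts to $\cG f$ on $\Sigma_\cD$; since only one summand is nonzero at each point, every Schwartz seminorm of $v$ is a \emph{supremum} over $\tau$ of a local seminorm of $u_{\tau,n(\tau)}\psi_\tau$, which on $\supp\psi_\tau$ is bounded by a uniform constant times $(1+|p_\tau|)^{n(\tau)}\|u_{\tau,n(\tau)}\|_{(n(\tau))}$. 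Letting $n(\tau)\to\infty$ slowly enough, and using that $\|u_{\tau,n}\|_{(n)}$ decays rapidly in $\tau$ for each fixed $n$, makes all these suprema simultaneously finite, so $v\in\cS(\bR^\ell)$ and $\cG f\in\cS(\Sigma_\cD)$; taking instead $n(\tau)\equiv N$ gives, by the same estimate, continuity of $\cG$ up to the $N$-th seminorm. \emph{(ii)} $\cG$ is injective, being the spherical transform of an $L^1$-algebra. \emph{(iii)} $\cG$ is surjective: given $h=u|_{\Sigma_\cD}$ with $u\in\cS(\bR^\ell)$, the functions $h_\tau:=(u\psi_\tau)|_{\Sigma_\cD}$ lie in $\cS(\Sigma^\tau_\cD)$ and, by the same bump estimate, $\|h_\tau\|_{\cS(\Sigma^\tau_\cD),(M)}\lesssim_{M,M'}(1+|p_\tau|)^{-M'}\|u\|_{(M+M')}$ is rapidly decaying in $\tau$; invoking property $({\rm S}_\tau)$ for every $\tau$, together with a polynomial-in-$\tau$ bound on the inverse $\tau$-spherical transforms, the preimages $f_\tau\in\cS(G)^{{\rm Int}(K)}_\tau$ satisfy $\sum_\tau\|f_\tau\|_{(M)}<\infty$ for all $M$, so $f:=\sum_\tau f_\tau\in\cS(G)^{{\rm Int}(K)}$ and $\cG f=h$. \emph{(iv)} Being a continuous linear bijection between Fréchet spaces, $\cG$ is a topological isomorphism by the open mapping theorem, which is {\rm(S)}.

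The main obstacle is concentrated in step \emph{(iii)} and in the geometric input on which it rests. Geometrically, one has to extract from the description of $\Sigma_\cD$ in \cite{ADR3} the decomposition into the separated slabs above, with uniformly discrete centres $p_\tau$ and $|p_\tau|\asymp\langle\tau\rangle$; this is exactly where the freedom to enlarge $\cD$ is used and where the structure of $\widehat{SO_n}$ enters (for $n=3,4$ it suffices, for instance, to adjoin to $\cD$ the Casimir of $SO_3$, resp.\ the two independent Casimirs coming from $\mathfrak{so}_4\cong\mathfrak{su}_2\oplus\mathfrak{su}_2$). Analytically, reconstructing a single Schwartz preimage of $h$ forces the inverse of each $\tau$-spherical transform to be bounded, on a fixed Schwartz seminorm, with at most polynomial growth in $\tau$; this two-sided polynomial control in $\tau$ of the $\tau$-spherical correspondence is the quantitative heart of the reduction, and — together with the individual correspondences $({\rm S}_\tau)$ — it is exactly what condition {\rm(S')} isolates and what one must verify, $K$-type by $K$-type, in the applications.
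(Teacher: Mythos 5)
The paper does not prove Theorem~\ref{rendiconti}: it is imported from \cite[Prop.~5.2, Thm.~7.1]{ADR3}, and what the paper supplies around it (Proposition~\ref{nucleo}, Corollary~\ref{nucleo-tipo-tau}, and the bump functions of \cite[Lemma~7.2]{ADR3} with $\|u_\tau\|_{(N)}\le C_N(1+|\xi'_\tau|)^{N'}$) reveals the intended route. Your plan coincides with it in the essential point: the only nontrivial content of (S')$\Rightarrow$(S) is your step \emph{(i)} --- glue the extensions $u_{\tau,N}$ over the slices $\{\xi'_\tau\}\times\Sigma^\tau_\cD$ using uniformly controlled cut-offs in the $\xi'$-block, letting rapid decay in $\tau$ beat the polynomial growth of $|\xi'_\tau|$ (your diagonal choice $n(\tau)\to\infty$ is a legitimate variant of the ``for each $N$ an extension of finite $N$-norm'' scheme, using monotonicity of the norms $\|\cdot\|_{(N)}$). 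The converse direction via the Casimir of $K$ is also the standard argument.

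Two remarks, one of which borders on a gap. First, your step \emph{(iii)} is both redundant and not self-contained: surjectivity of $\cG$ onto $\cS(\Sigma_\cD)$ is immediate from the multiplier theorem (Proposition~\ref{nucleo}: $\cG^{-1}u$ is the Schwartz convolution kernel of $u(D_1,\dots,D_\ell)$ for any $u\in\cS(\Sigma_\cD)$), with no need for the slice-by-slice reconstruction or for $({\rm S}_\tau)$; conversely, the ``polynomial-in-$\tau$ bound on the inverse $\tau$-spherical transforms'' that your step \emph{(iii)} and your closing paragraph rely on is \emph{not} contained in (S') --- it is exactly Corollary~\ref{nucleo-tipo-tau}, a consequence of Martini's theorem, valid unconditionally. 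So your final sentence misplaces the quantitative heart: (S') isolates only the forward extension-with-decay property; the inverse control is a separate, already available ingredient, and without invoking it your step \emph{(iii)} would not close. Second, your separation of the strata via infinitesimal characters ($Z(U(\fk_\bC))$, $|p_\tau|\asymp\langle\tau\rangle$) requires $K$ connected; the theorem is stated for arbitrary compact $K$, and the paper's device (the centre of $\bD(K)$, i.e.\ the eigenvalues $\xi'_\tau$ of $\overline{\chi_\tau}$, together with \cite[Lemma~7.2]{ADR3}) is what separates $K$-types in general. For the pairs actually treated here ($K=SO_3, SO_4, U_2$) your version is adequate.
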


In the two cases considered here, $n=3,4$, the copy $\Sigma_\cD$ of the spectrum is embedded in $\bR^n$ and its subsets $\Sigma^\tau_\cD$ lie in distinct parallel 2-dimensional affine subspaces, so that, as long as we treat each $\tau$ separately, $\Sigma^\tau_\cD$ can be regarded as being embedded in $\bR^2$ and $\cG f_\tau$ as a function of two variables only. In this picture $\Sigma^\tau_\cD$ is the union of finitely many half-parabolas if $n=3$, and of finitely many half-lines if $n=4$, all exiting from the origin. The number of half-parabolas, or of half-lines, increases with the dimension of $\tau$. 

In order to obtain property (S') we prove the following two facts:
\begin{enumerate}[\rm(i)]
\item for general $f\in\cS\big(M_n(\bR)\big)^{{\rm Int}(SO_n)}$ and any $\tau\in\widehat{SO_n}$, it is possible to determine a formal two-variable power series $s_\tau(\xi_1,\xi_2)=\sum_{p,q}\frac{a^\tau_{p,q}}{p!q!}\xi_1^p\xi_2^q$  with the property that, for any $N\in\bN$, there are functions $h_{\tau,N}\in\cS(\bR^2)$ with rapidly decaying $N$-th Schwartz norm,  Taylor expansion at $0$ equal to $s_\tau$, and $\cG f_\tau-h_{\tau,N} $
vanishing of infinite order at~$0$
(Propositions~\ref{taylor} and~\ref{jet-extension});
\item an explicit extension formula for $\cG f_\tau$ exists, with the  decay in $\tau$ prescribed in (S'), under the assumption that $\cG f_\tau$ vanishes of infinite order at $0\in\Sigma_\cD^\tau$ (Proposition~\ref{p:nulla}).
\end{enumerate}

Once (i) and (ii) are verified, an application of  \cite[Prop.~4.2.1]{Mar1} gives the desired extensions $u_{\tau,N}$ of $\cG f_\tau$, satisfying condition~(S') (Proposition~\ref{S'}).
Following this approach, the major obstacle consists in proving the existence of the formal power series $s_\tau$ with the properties in (i), because the analysis of $\cG f_\tau$ near 0 only gives, at any degree, a certain number of linear relations among the unknown coefficients $a^\tau_{p,q}$. These relations lead to infinitely many, overdetermined in general, linear systems in the $a^\tau_{p,q}$ and the crucial problem is their solvability, a property apparently impossible to obtain by plain linear algebra.

Once solvability is proved the decay estimates in (i) can be proved in a relatively easy way.
In order to bypass this problem, we introduce an analytic argument to directly obtain $({\rm S}_\tau)$ for every~$\tau$, though not necessarily with the decay estimates required in~(S').

 This argument exploits the identification, via an isomorphism $A_\tau$, of $L^1\big(M_n(\bR)\big)_\tau^{{\rm Int}(SO_n)}$ with the algebra $L^1\big(\bR^n,{\rm End}(\spaziorp_\tau)\big)^{SO_n}$ of ${\rm End}(\spaziorp_\tau)$-valued, $SO_n$-equivariant, integrable functions on $\bR^n$, $\spaziorp_\tau$ being a representation space of $\tau$.
The map $A_\tau$ also gives a correspondence between the two sets of spherical functions and induces a homomorphisms from $\bD\big(\! M_n(\bR)\big)^{{\rm Int}(\grpSO_n)\!}$ onto 
$\big(\bD(\bR^n)\otimes {\rm End}(\spaziorp_\tau)\big)^{\grpSO_n\!}$, 
the algebra of ${\rm End}(\spaziorp_\tau)$-valued, $\grpSO_n$-equivariant differential operators on $\bR^n$.

For both $n=3$ and $n=4$, this last algebra admits a single generator $\mathbf D_\tau$ as a 
$\bD(\bR^n)^{\grpSO_n}$-module.
For $F\in \cS\big(\bR^n,{\rm End}(\spaziorp_\tau)\big)^{SO_n}$ it is then possible to obtain the representation formula
(Proposition~\ref{sommaF2})
\be\label{F=sum}
F=\sum_{j=0}^m\mathbf D_\tau^j g_j\ ,
\ee
 with $m$ depending on $\tau$ and the $g_j$ radial, scalar-valued, Schwartz functions on $\bR^n$. This reduces the problem to the existence of a Schwartz extension for the Hankel transform of each $g_j$, and this is a trivial consequence of the Whitney representation theorem for even functions on $\bR$. 
 
Though the general plan described above corresponds to that already used in~\cite{ADR4}, the detailed treatment presents here some new complications.
In principle, the same strategy of proof of Schwartz correspondence can be extended to real and complex motion pairs in higher dimensions, but the algebraic problems arising in Sections 4 and 5 become much more delicate. 

 \medskip

 Our paper is organized as follows.
In Section 2 we recall some preliminary facts about strong Gelfand pairs of polynomial growth and describe the decomposition into $K$-types.
   Moreover, specializing to the case where  $G=K\ltimes_\delta H$, we describe the isomorphism $A_\tau$ {between $\cS\big(G\big)^{{\rm Int}(K)}_\tau$ and  $\cS\big(H,{\rm End}(\spaziorp_\tau)\big)^{K}$. 
In Section 3 we fix $\tau\in \widehat{\grpSO_n}$ and, for any $n$,  we recall  from \cite{RS} some general facts about the algebra
 $\big(\bD(\bR^n)\otimes {\rm End}(\spaziorp_\tau)\big)^{\grpSO_n}$  and     
the Gelfand spectrum 
of $L^1\big(\bR^n,{\rm End}(\spaziorp_\tau)\big)^{\grpSO_n}$.
 Starting from Section 4, we limit ourselves to the cases $n=3,4$. First 
we choose  a priviliged generating  system 
of   $\big(\bD(\bR^n)\otimes {\rm End}(\spaziorp_\tau)\big)^{\grpSO_n}$
and 
spot the main differences between the cases $n=3$ and $n=4$. Then we fix notation in order to treat the two cases in a unified way as far as possible and  we determine  the Gelfand spectrum $\Sigma^\tau_\cD$.
In Section 5 we apply the results of the previous section to obtain the decomposition  formula~\eqref{F=sum}
 and we show  that property (${\rm S}_\tau$) holds  for each $\tau\in \widehat{\grpSO_n}$.
 In Section 6 we derive, for fixed $\tau$, the linear systems whose solutions   allow us to obtain the infinite order local approximations $\{h_{\tau, N}\}_N$ of $\cG_\tau f_\tau$ with rapid decay, according to (i) above.
At this stage, dealing with the linear systems,  we have to introduce new technical arguments with respect to \cite{ADR4}.
Finally we sum up all   our results proving that the Schwartz correspondence  holds for $\big(M_n(\bR),SO_n\big)$ with $n=3,4$.

\section{Spherical transforms for strong Gelfand pairs}\label{generale}

In this section we fix notation, recall basic facts and prove some statements in the general context of a strong Gelfand pair $(G,K)$ of polynomial growth. For details and unproven statements, we refer to \cite{RS, ADR3}.

The main result in the first subsection is the polynomial growth of the norms of  $\cG_\tau^{-1}$, the inverse of the $\tau$-spherical transform on $\cS(G)^{{\rm Int}(K)}_\tau$. 

In the second subsection we specialize   to the case where $G=K\ltimes_\delta H$.
In the analysis of functions of a given $K$-type, a matrix realization is often used, see \cite{RS} and the references therein. Specifically,
in Proposition~\ref{norme-f-Af}  we prove that $\cS(G)^{{\rm Int}(K)}_\tau$ is algebraically and  topologically isomorphic to  $\SmatrH$, the space of Schwartz  
 ${\rm End}(\spaziorp_\tau)$-valued equivariant functions  on $H$, i.e. satisfying \eqref{equivariance}.

\medskip
\subsection{The $\tau$-spherical transform and its inverse.} 
Let $(G,K) $  be a strong Gelfand pair.
We need notational conventions in order to distinguish among
the different Gelfand structures that intervene.

The plain symbols $\Sigma$ and $\cG$ denote, respectively, Gelfand spectrum and spherical  transform of $(G,K)$ as strong pair, i.e., the set of multiplicative linear functionals of the full algebra $L^1(G)^{{\rm Int}(K)}$ endowed with the weak$^*$ topology. 
The symbols $\Sigma^\tau$ and $\cG_\tau$ denote, respectively, Gelfand spectrum and spherical transform (or $\tau$-{\it spherical transform}) of 
the commutative triple $(G,K,\tau)$, i.e., of the  algebra $L^1(G)^{{\rm Int}(K)}_\tau$.
As usual, each multiplicative linear functional
 is realized by integration with a bounded spherical function. 
It is a fact \cite{RS} that every bounded spherical function is of $K$-type $\tau$ for some, hence unique, 
$\tau\in\widehat K$. Hence $\Sigma$  is the disjoint union of the spectra  $\Sigma^\tau$, consisting of the bounded spherical functions  of type~$\tau$ (or $\tau$-{\it spherical functions}). 
In particular, $\Sigma^{\tau_0}$ and $\cG_{\tau_0}$ are  Gelfand spectrum and spherical transform of the (ordinary) Gelfand pair $(G,K)$.
By \eqref{orthogonality}, the spherical transform $\cG f$ of $f\in L^1(G)^{{\rm Int}(K)}$
 restricts to  $\Sigma^\tau$ as the $\tau$-spherical transform $\cG_\tau f_\tau$
 of its $\tau$-component.
\medskip

If $G$ 
is connected and
denoting by $\bD(G)^{\rm{Int}(K)}$ the algebra of left-invariant differential operators on $G$ that are also ${\rm Int}(K)$-invariant, the bounded spherical functions are the elements of $C^\infty(G)^{{\rm Int}(K)}$ that are eigenfunctions of all elements of $\bD(G)^{\rm{Int}(K)}$, normalized to attain value 1 at the identity.

By the general facts about Gelfand pairs mentioned in the introduction, it follows that, if $G$ has also polynomial growth, the choice of a finite system 
$\cD=\{D_1,\dots,D_\ell\}$ of symmetric
 generators of $\bD(G)^{\rm{Int}(K)}$ determines a homeomorphism $\iota_\cD$ of $\Sigma$ with a closed subset  $\Sigma_\cD$ of  $\bR^{\ell}$.

We shall always assume that $G$ is connected with polynomial growth.

It is convenient to choose $\cD$ of the form $\cD=\cD'\cup\cD''$, where $\cD'=\{D_1,\dots,D_{\ell'}\}$ generates the center of $\bD(K)$. Then the homeomorphism $\iota_\cD=(\iota_\cD',\iota_\cD'')$ maps $\ph\in\Sigma$ to a point 
$$
\xi=(\xi',\xi'')=\big(\iota'_\cD(\ph),\iota''_\cD(\ph)\big)\in\bR^{\ell'}\times\bR^{\ell''}\ ,\qquad(\ell''=\ell-\ell')\ .
$$
When $\ph$ is $\tau$-spherical, the coordinates of $\xi'_\tau=\iota'_\cD(\ph)$ are the eigenvalues of  $\overline{\chi_\tau}$  under 
$\{D_1,\dots,D_{\ell'}\}$ and 
$$
\Sigma_\cD=\bigcup_{\tau\in\widehat K}\{\xi'_\tau\}\times \Sigma^\tau_\cD
$$
where
$$
\Sigma^\tau_\cD=\big\{\iota''_\cD(\ph):\ph\in\Sigma^\tau\big\}\subset\bR^{\ell''}
$$
is homeomorphic to $\Sigma^\tau$. Note also that rapid decay in $\tau$ can be measured in terms of powers of $|\xi'_\tau|$
(see~\cite[formula~(7.2)]{ADR3}).

With a language abuse the spherical transform 
of a function in $\cS(G)^{{\rm Int}(K)}$ will be considered as a function on $\Sigma_\cD\subset\bR^\ell$, and the $\tau$-spherical transform 
of a function in $\cS(G)^{{\rm Int}(K)}_\tau$ will be considered as a function on 
$\Sigma^\tau_\cD\subset\bR^{\ell''}$.
In formulae, for every $f\in \cS(G)^{{\rm Int}(K)}$,
\[
\begin{aligned}
&\cG f (\xi)=\int_G f(x)\, \ph(x^{-1})\, dx \qquad 
\forall  \xi=\iota_\cD(\ph)\in\Sigma_\cD 
\\
&\cG_\tau f_\tau(\xi'') 
=\cG f(\xi'_\tau,\xi'')\qquad 
\forall \xi'' =\iota''_\cD(\ph)\in \Sigma^\tau_\cD\ .
\end{aligned}
\]
In particular, when $f$ is of type $\tau$,
\[
\cG f (\xi'_{\sigma},\xi'')=
\begin{cases}
\cG_\tau f (\xi'') & \text{if }\sigma=\tau
\\
0 & \text{if } \sigma\neq \tau\ .
\end{cases}
\]
\medskip
For a fixed compact, symmetric neighborhood $U$ of the identity $e$ of $G$, define 
$$
|x|=\begin{cases} 0&\text{ if }x=e\\ \min\{n:x\in U^n\}&\text{ if }x\neq e\ .\end{cases}
$$
For our convenience 
we will use the following  Schwartz norms.
For $N\in \mathbb N$,
 \beas
&\|u\|_{(N)}=\max_{|\al |\le N}\|\big(1+|\cdot |\big)^N\partial^\al u \|_{L^2} \qquad \forall u\in\cS(\bR^\ell)
\\
&\|f\|_{(N)}=\max_{|\al|\le N}\|\big(1+|\cdot|\big)^N\, D^\al f\|_{L^2} \qquad \forall f \in \cS(G)^{{\rm Int}(K)}\ ,
\eeas
where for every multiindex $\al=(\al_1,\dots,\al_\ell)$
$$
\partial^\al=\partial_{\xi_1}^{\al_1}\partial_{\xi_2}^{\al_1}\cdots \partial_{\xi_\ell}^{\al_\ell} 
\qquad\text{and}\qquad
D^\al=D_1^{\al_1}D_2^{\al_2}\cdots D_\ell^{\al_\ell} .
$$ 
When $C$ is a closed set of $\bR^\ell$,  $\cS(C)$ denotes the   space  of restrictions to $C$ of Schwartz functions on~$\bR^\ell$, that is
$$ 
\cS(C)=\big\{u_{|_{C}}\,:\,u\in\cS(\bR^\ell)\big\} \cong \cS(\bR^\ell)/\{u\,:\,u_{|_{C}}=0\}\ ,
$$
endowed with the quotient topology, induced by the norms i.e.
\[
\|\psi\|_{(N)}=\inf\{\|u\|_{(N)}\, :\, u_{|_{C}}=\psi\}
\qquad \psi\in \cS(C)\quad N\in \bN \ .
\]

By the Plancherel-Godement Theorem~\cite[p.~193]{W}
 the spherical transform $\cG$ is a unitary operator  from $L^2(G)^{{\rm Int}(K)}$ onto $L^2(\Sigma_\cD, \beta)$, where $\beta$ is the Plancherel  measure on $\Sigma_\cD$. 
 Since the symmetric operators $D_j$ 
are essentially self-adjoint~\cite{Nels-Steins}, the set $\Sigma_\cD\subset \bR^\ell$ is their joint $L^2(G)^{{\rm Int}(K)}$-spectrum (see \cite[Proposition~4.4]{ADR3}).
  In spectral analytic terms, $\cG\inv \psi$ is the convolution kernel of $\psi(D_1,\dots,D_\ell)$.

We can then rephrase a multiplier theorem by 
A.~Martini~\cite[Prop. 4.2.1]{Mar1} 
in the following way.

\begin{proposition}\label{nucleo}
The map $\cG\inv$ is continuous from $ \cS(\Sigma_\cD)$ in $\cS(G)^{{\rm Int}(K)}$, i.e., for every $N\in \bN$ there exist $M=M(N)$ and $C_N$ such that 
$$
\|\cG^{-1}u \|_{(N)} \leq C_N\, \|u\|_{(M)}\qquad \forall u \in  \cS(\Sigma_\cD).
$$
\end{proposition}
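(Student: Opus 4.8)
The plan is to read the statement as a direct transcription of Martini's multiplier theorem \cite[Prop.~4.2.1]{Mar1} into the present notation, using the spectral picture recalled above. First I would fix the normalization: for $u\in\cS(\bR^\ell)$ the function $\cG\inv(u_{|_{\Sigma_\cD}})$ is, by the Plancherel--Godement theorem and \cite[Prop.~4.4]{ADR3}, the convolution kernel of the operator $u(D_1,\dots,D_\ell)$ obtained by joint spectral calculus of the commuting essentially self-adjoint operators $D_1,\dots,D_\ell$ on $L^2(G)^{{\rm Int}(K)}$, whose joint spectrum is exactly $\Sigma_\cD$. If $u_{|_{\Sigma_\cD}}=0$ then this operator is the zero operator, hence $\cG\inv(u_{|_{\Sigma_\cD}})=0$; thus the kernel depends only on $\psi=u_{|_{\Sigma_\cD}}\in\cS(\Sigma_\cD)$, and the map $\psi\mapsto\cG\inv\psi$ is well defined on the quotient $\cS(\Sigma_\cD)\cong\cS(\bR^\ell)/\{u:u_{|_{\Sigma_\cD}}=0\}$.

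Next I would invoke Martini's theorem. Since $G$ has polynomial growth and $\cD=\{D_1,\dots,D_\ell\}$ is a finite system of symmetric generators of $\bD(G)^{{\rm Int}(K)}$, the filtration of $\bD(G)^{{\rm Int}(K)}$ by degree equips $\cD$ with the structure of a weighted subcoercive system of operators on $G$ in the sense of \cite{Mar1} (this is the content of the relevant discussion in \cite{ADR3}); the associated joint functional calculus commutes with the $K$-conjugation action, so it acts on each ${\rm Int}(K)$-isotypic component of $L^2(G)$ and in particular on $L^2(G)^{{\rm Int}(K)}$, producing $K$-central kernels. Martini's result \cite[Prop.~4.2.1]{Mar1} then asserts precisely that, for each $N\in\bN$, there are $M=M(N)\in\bN$ and $C_N>0$ such that $\|\cG\inv u\|_{(N)}\le C_N\|u\|_{(M)}$ for every $u\in\cS(\bR^\ell)$, the norms $\|\cdot\|_{(N)}$ built from the $D^\al$ being equivalent to the standard Schwartz norms on $\cS(G)^{{\rm Int}(K)}$. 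Finally, passing to the quotient: given $\psi\in\cS(\Sigma_\cD)$ and $\eps>0$, pick $u\in\cS(\bR^\ell)$ with $u_{|_{\Sigma_\cD}}=\psi$ and $\|u\|_{(M)}\le\|\psi\|_{(M)}+\eps$; then $\|\cG\inv\psi\|_{(N)}\le C_N(\|\psi\|_{(M)}+\eps)$, and letting $\eps\to0$ gives the claim.

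The only genuine point requiring care is the verification that $\cD$ satisfies the hypotheses of \cite[Prop.~4.2.1]{Mar1} --- namely that it is a weighted subcoercive system on $G$ --- which is where the polynomial growth of $G$ enters decisively and which I would simply quote from \cite{ADR3}; a secondary, purely bookkeeping matter is to check that the weighted $L^2$ Schwartz norms used here match (up to equivalence) those in Martini's statement, and that the kernel of $u(D_1,\dots,D_\ell)$ on the full $L^2(G)$ restricts correctly to give $\cG\inv(u_{|_{\Sigma_\cD}})$ on the subspace $L^2(G)^{{\rm Int}(K)}$. Beyond these citations the argument is essentially formal.
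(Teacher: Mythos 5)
Your proposal is correct and follows the same route as the paper: the paper also obtains this proposition as a direct rephrasing of Martini's multiplier theorem \cite[Prop.~4.2.1]{Mar1}, after identifying $\cG\inv\psi$ with the convolution kernel of $\psi(D_1,\dots,D_\ell)$ via the Plancherel--Godement theorem and the joint $L^2$-spectrum description of $\Sigma_\cD$ from \cite{ADR3}. The extra details you supply (well-definedness on the quotient and passing to the infimum defining the norm on $\cS(\Sigma_\cD)$) are consistent with, and implicit in, the paper's treatment.
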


The use of this proposition is twofold. First, Proposition~\ref{nucleo}, combined with the open mapping theorem for Fr\'echet spaces,
allows us to disregard continuity of $\cG$. In other words, in order to prove that property (S) holds for the strong Gelfand pair $(G,K)$, we only need to prove that {$\cG^{-1}$ is surjective, i.e., for every $f\in \cS(G)^{\text{Int}(K)}$, the spherical transform $\cG f$ extends to a Schwartz function on $\bR^\ell$. 
This will be proved by reduction to $K$--types according to the   criterion in Theorem~\ref{rendiconti}.

The second application of Proposition~\ref{nucleo} consists in establishing an analogous result for a fixed $K$--type $\tau$. This will be used in the proof of our main result {(Proposition~\ref{S'})} as a tool
 in the construction of the extension with norms of rapid decay in $\tau$ as required in Theorem~\ref{rendiconti}.

\begin{corollary}\label{nucleo-tipo-tau} Let $\tau$ be in $\widehat K$.
Every $v\in  \cS(\Sigma^\tau_\cD)$ is the spherical transform of a unique function $\cG^{-1}_\tau v\in  \cS(G)_\tau^{{\rm Int}(K)}$.
Moreover for every $N\in \bN$ there exist $M=M(N)$, $M'=M'(N)$ and $C_N$ such that 
$$
\|\cG^{-1}_\tau v \|_{(N)} \leq C_N\, (1+|\xi'_\tau|)^{M'}\,\, \|v\|_{(M)}\qquad \forall v \in  \cS(\Sigma^\tau_\cD).
$$
\end{corollary}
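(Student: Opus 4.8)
The plan is to reduce the statement to Proposition~\ref{nucleo}. The key observation is that $\{\xi'_\sigma:\sigma\in\widehat K\}$ is a discrete closed subset of $\bR^{\ell'}$: since $\cD'$ can be chosen to contain a positive multiple of the Casimir of $K$, one has $|\xi'_\sigma|\to\infty$, and only finitely many $\sigma$ lie in any bounded region. Hence there is $r_\tau\in(0,1]$, bounded below by a negative power of $1+|\xi'_\tau|$, such that the ball $B(\xi'_\tau,r_\tau)\subset\bR^{\ell'}$ contains no $\xi'_\sigma$ with $\sigma\ne\tau$. I would fix once and for all a bump function $\phi\in C^\infty_c(\bR^{\ell'})$ with $\phi\equiv1$ near the origin and support in the unit ball, and set $\chi_\tau(\xi')=\phi\big(r_\tau^{-1}(\xi'-\xi'_\tau)\big)$, a cutoff equal to $1$ at $\xi'_\tau$ and vanishing at every other $\xi'_\sigma$.

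Given $v\in\cS(\Sigma^\tau_\cD)$ and $N\in\bN$, let $M=M(N)$ be the index furnished by Proposition~\ref{nucleo} and choose an extension $u\in\cS(\bR^{\ell''})$ of $v$ with $\|u\|_{(M)}\le2\|v\|_{(M)}$. Then $w_\tau(\xi',\xi'')=\chi_\tau(\xi')\,u(\xi'')$ lies in $\cS(\bR^\ell)$, and from $\Sigma_\cD=\bigsqcup_\sigma\{\xi'_\sigma\}\times\Sigma^\sigma_\cD$ one sees that the restriction $w_\tau|_{\Sigma_\cD}$ is supported in $\{\xi'_\tau\}\times\Sigma^\tau_\cD$ and equals $v$ there. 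Set $f=\cG^{-1}\big(w_\tau|_{\Sigma_\cD}\big)\in\cS(G)^{{\rm Int}(K)}$. Since $\cG f$ is supported in $\Sigma^\tau$, and since the $K$-type decomposition corresponds to the partition $\Sigma=\bigsqcup_\sigma\Sigma^\sigma$ — each $\cG f_\sigma$ being supported in $\Sigma^\sigma$ by~\eqref{orthogonality} — it follows that $f=f_\tau$ is of type $\tau$ with $\cG_\tau f=v$; injectivity of $\cG$, which by Plancherel--Godement is unitary from $L^2(G)^{{\rm Int}(K)}$ onto $L^2(\Sigma_\cD,\beta)$, gives uniqueness, so that $\cG^{-1}_\tau v:=f$ is well defined.

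For the norm estimate, Proposition~\ref{nucleo} yields $\|\cG^{-1}_\tau v\|_{(N)}\le C_N\,\|w_\tau|_{\Sigma_\cD}\|_{(M)}\le C_N\,\|w_\tau\|_{(M)}$, and it remains to bound $\|w_\tau\|_{(M)}$. On the support of $w_\tau$ one has $|\xi'|\le|\xi'_\tau|+1$, while $\|\partial^{\alpha'}\chi_\tau\|_\infty\le r_\tau^{-|\alpha'|}\|\partial^{\alpha'}\phi\|_\infty$ with $r_\tau^{-1}\le C(1+|\xi'_\tau|)^{c}$; expanding the derivatives of $w_\tau=\chi_\tau\otimes u$ by the Leibniz rule and integrating therefore gives $\|w_\tau\|_{(M)}\le C\,(1+|\xi'_\tau|)^{M'}\,\|u\|_{(M)}$ for a suitable $M'=M'(N)$, and with $\|u\|_{(M)}\le2\|v\|_{(M)}$ the claimed inequality follows.

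I expect the only genuinely non-routine point to be the lower bound on the separation radius $r_\tau$. In full generality it reflects the fact that the tuples $\xi'_\sigma$, polynomial in the highest weight and evaluated at lattice points, cannot become arbitrarily close relative to their size; but in the two cases $n=3,4$ relevant to this paper the pertinent spectra are completely explicit and the points $\xi'_\sigma$ are uniformly separated, so that one may simply take $r_\tau$ equal to a fixed constant ($c=0$) and $M'=M$, and even this mild estimate becomes unnecessary.
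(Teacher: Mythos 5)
Your proof follows essentially the same strategy as the paper's: take a Schwartz extension $v_\ast$ of $v$ on $\bR^{\ell''}$, multiply by a cutoff in the $\xi'$ variables that equals $1$ at $\xi'_\tau$ and vanishes at all other $\xi'_\sigma$, restrict to $\Sigma_\cD$ and apply Proposition~\ref{nucleo}. The one place you diverge is in how you obtain that cutoff: the paper quotes it as a black box from \cite[Lemma~7.2]{ADR3}, with the stated norm bound $\|u_\tau\|_{(N)}\leq C_N(1+|\xi'_\tau|)^{N'}$ built in, whereas you build it by hand from a rescaled bump supported in $B(\xi'_\tau,r_\tau)$. This is a perfectly legitimate alternative, but it shifts the burden onto the separation lower bound $r_\tau\gtrsim(1+|\xi'_\tau|)^{-c}$, which you correctly identify as the non-routine point and then hedge on. The corollary as stated (and as used in Section~2) is in the generality of an arbitrary strong Gelfand pair of polynomial growth, so the general-case appeal to ``polynomials at lattice points cannot cluster'' is not a proof; that is precisely what \cite[Lemma~7.2]{ADR3} is invoked for. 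Your observation that in the $n=3,4$ cases the $\xi'_\sigma$ are explicit and uniformly separated (for $n=3$, $\xi'_{\tau_\mu}=\mu(\mu+1)$, and similarly for $n=4$) is correct and would let you take $r_\tau$ constant, but as written your argument either needs the cited lemma or needs to be restricted to the explicit cases. Also note one small point: your $w_\tau|_{\Sigma_\cD}$ is indeed in $\cS(\Sigma_\cD)$ as the restriction of the Schwartz function $w_\tau$, and your uniqueness argument via Plancherel--Godement is a valid substitute for the paper's appeal to semisimplicity of $L^1(G)^{{\rm Int}(K)}_\tau$.
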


\begin{proof}  The starting point is the existence, for each $\tau\in \widehat K$, of a Schwartz function
$u_\tau\in \cS(\bR^{\ell'})$ such that $u_\tau(\xi'_\tau)=1$, $u_\tau(\xi'_\sigma)=0$
if $\sigma\in\widehat K$, $\sigma\ne\tau$ and
for every $N$ there exist $ N'=N'(N)$ and $C_N$ such that
\[
\|u_\tau\|_{(N)}\leq C_N\, (1+|\xi'_\tau|)^{N'}.
\]
This fact has been proved in~\cite[Lemma 7.2]{ADR3}.

Suppose now we are given $v\in  \cS(\Sigma^\tau_\cD)$ and let   $v_\ast\in\cS(\bR^{\ell''})$ be an extension of it. Define 
a Schwartz function $ u_\ast$ on  $\cS(\bR^\ell)$ by 
$$
 u_\ast(\xi',\xi'')=u_\tau(\xi')\,v_\ast(\xi'')\qquad \forall  (\xi',\xi'')\in  \bR^\ell
$$
and denote by $u$ its restriction to $\Sigma_\cD$. 
By Proposition~\ref{nucleo}
the function $f=\cG^{-1}  u$ belongs to $  \cS(G)^{{\rm Int}(K)}$ and
 for every $N\in \bN$ there exist $M=M(N)$, $M'=M'(N)$ and $C_N$ such that 
$$
\|f \|_{(N)} \leq C_N\, \| u\|_{(M)} \leq C_N\, (1+|\xi'_\tau|)^{N'} \|v\|_{(M)} \ .
$$
Finally, 
\[
\cG f(\xi'_\sigma,\xi'')=u(\xi'_\sigma,\xi'')=u_\tau(\xi'_\sigma)\,v(\xi'') =0\qquad \sigma\neq \tau
\]
which implies that $f$ is of type $\tau$ and
\[
v(\xi'')= u(\xi'_\tau,\xi'')=\cG f(\xi'_\tau,\xi'')=\cG_\tau f(\xi'')\qquad \forall \xi''\in \Sigma^\tau_\cD\ .
\]
Uniqueness of $v$ is obvious, because $L^1(G)^{{\rm Int}(K)}_\tau$ is semisimple.
\end{proof}

\bigskip
\subsection{${\rm End}(\spaziorp_\tau)$-valued Schwartz functions}

\quad 
For the purposes of this paper, we will introduce ${\rm End}(\spaziorp_\tau)$-valued functions only in the case of the semidirect product $G=K\ltimes_\delta H$, with $K$ compact and acting on the nilpotent group $H$ by $\delta$.

We denote by $\spaziorp_\tau$ the representation space of $\tau$, by $d_\tau$ its dimension and by ${\rm End}(\spaziorp_\tau)$ the algebra of linear endomorphisms of $\spaziorp_\tau$. The space of integrable 
 ${\rm End}(\spaziorp_\tau)$-valued functions on $H$ is an algebra with convolution defined by
$$
F_1*F_2(h)=\int_H F_2(z\inv h)F_1(z)\,dz
$$
and norm $\|F\|_1=\int_H\|F(h)\|_{\rm op}\, dh$.

We say that an ${\rm End}(\spaziorp_\tau)$-valued function~$F$ on $H$ is $K$-equivariant
if it satisfies 
\be\label{equivariance}
F(\delta(k)h)=\tau(k)F(h)\tau(k\inv)\ ,\qquad\forall\,k\in K\ ,
\ee
and denote by $\LmatrH$, and similarly for other function spaces, the subspace of integrable $K$-equivariant functions.

The two maps
$$
\begin{array}{rcrcrcl}A_{\tau}&:&f(k,h)&\longmapsto &F(h)&=&\int_Kf(k,h)\tau(k)\,dk\\ 
A_{\tau}\inv&:&F(h)&\longmapsto &f(k,h)&=&d_\tau\,\tr\big(\tau(k\inv)F(h)\big)
\end{array}
$$
establish a one-to-one correspondence between $K$-central functions $f$ on $G$ of $K$-type $\tau$ and $K$-equivariant ${\rm End}(\spaziorp_\tau)$-valued functions on $H$.

In particular, $A_{\tau}$ is an isomorphism of algebras from $L^1(G)^{{\rm Int}(K)}_{\tau}$ onto 
$\LmatrH$ and $\sqrt{d_\tau}A_{\tau}$ is unitary from $L^2(G)^{{\rm Int}(K)}_{\tau}$ onto 
$\LduematrH$, where
$$
\|F\|_2^2=\int_{H}\big\|F(h)\big\|^2_{{\rm HS}}\,dh
\qquad \forall F\in \LduematrH \ 
$$
and $\|\cdot\|_{{\rm HS}}$ is the Hilbert-Schmidt norm.

Via the map $A_\tau$, we can then redefine the ingredients of the spherical analysis of $K$-central functions of type $\tau$ on $G$ in terms of the above ${\rm End}(\spaziorp_\tau)$--valued model. 

\medskip
\centerline{
\begin{tikzpicture}[node distance=6.5cm, auto]
\node (A) {$L^1(G)^{{\rm Int}(K)}_\tau$};
\node (B) [right of=A] {$C(\Sigma_\tau)$};
\node (C) [node distance=3cm, below of=A] {$\LmatrH$};
\draw[->] (A) to node [swap] {$\cG_\tau$} (B);
\draw[->] (A) to node [swap] {$A_\tau$} (C);
\draw[->] (C) to node [swap] {$\cG_\tau^\sharp={\cG_{\tau}\circ { A_\tau^{-1}}}$} (B);
\end{tikzpicture}}
\medskip

This will be explained in detail in the next section in the case of the euclidean motion group,
where we explicitly determine the ${\rm End}(\spaziorp_\tau)$--valued spherical functions, their eigenvalues and we deduce, via $A_\tau$, the $\tau$-spherical ones.
We denote by $\big(\bD(H)\otimes {\rm End}(\spaziorp_\tau)\big)^K$
the algebra of ``${\rm End}(\spaziorp_\tau)$--valued'' differential operators on $H$ which commute with translations and with the action of $K$ on smooth ${\rm End}(\spaziorp_\tau)$--valued functions $F$ given by
$$
k:F\longmapsto 
\tau(k)F(\delta(k\inv) h)\tau(k\inv)\ .
$$
Conjugation by $A_{\tau}$ is a homomorphism of $\bD(G)^{{\rm Int}(K)}$   onto $\big(\bD(H)\otimes {\rm End}(\spaziorp_\tau)\big)^{K}$ and its kernel consists of the operators which vanish on functions of $K$-type $\tau$.

The following lemma allows us to disregard the operators in $\cD'$,  the center of  $\bD(K)$, in the evaluation 
of the Schwartz norms of functions in $\SmatrH$.

\begin{lemma} \label{operatoriDscalari}  Let $\tau\in \widehat K$  and $D_j$ be in $\cD'$.
Then $A_\tau D_j A_\tau^{-1}$ is the scalar multiplication operator by $\xi_j$ 
so that for every $F$ in $C^\infty\cap\LduematrH$
\[
\|A_\tau D A_\tau^{-1} F(h)\|_{\rm HS} \leq |\xi'_\tau|\, \|F(h)\|_{\rm HS}\qquad  \forall h\in H,\ \forall D\in \cD'.
\]
\end{lemma}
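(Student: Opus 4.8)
The statement asserts that for $D_j \in \cD'$ (a generator of the center of $\bD(K)$), the operator $A_\tau D_j A_\tau^{-1}$ acts on $\LmatrH$ as multiplication by the scalar $\xi_j$, where $\xi'_\tau = (\xi_1,\dots,\xi_{\ell'})$ records the eigenvalues of the bounded spherical functions of type $\tau$ under $\cD'$. The key point is that $D_j$, being a central element of $\bD(K)$ (viewed inside $\bD(G)^{{\rm Int}(K)}$ via the decomposition $G = K\ltimes_\delta H$), acts on the $\tau$-isotypic component by a scalar, namely the value of the infinitesimal character of $\tau$ at $D_j$. The first step is therefore to make this precise: by Schur's lemma, any element of the center of $U(\mathfrak{k})$ acts on the irreducible $K$-module $\spaziorp_\tau$ as a scalar $c_j(\tau)$, and this scalar is exactly the eigenvalue $\xi_j$ of the $\tau$-spherical functions under $D_j$ — this is essentially built into the definition of $\iota'_\cD$ given earlier in the excerpt (the coordinates of $\xi'_\tau$ are the eigenvalues of $d_\tau\overline{\chi_\tau}$ under $D_1,\dots,D_{\ell'}$).

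\textbf{Main computation.} Next I would trace through the conjugation $A_\tau D_j A_\tau^{-1}$ explicitly. Recall that for $f$ of $K$-type $\tau$ we have $F(h) = \int_K f(k,h)\,\tau(k)\,dk$. Applying $D_j \in \bD(K)$ differentiates in the $K$-variable only; since $D_j$ is left-invariant on $K$, integrating against $\tau(k)$ transfers the action of $D_j$ onto $\tau$ via the corresponding element of $U(\mathfrak{k})$ acting in the representation $\tau$. Because $D_j$ lies in the \emph{center} of $\bD(K)$, this element of $U(\mathfrak{k})$ is central, hence acts on $\spaziorp_\tau$ as the scalar $c_j(\tau) = \xi_j$. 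Therefore $A_\tau(D_j f)(h) = \xi_j\, F(h)$, i.e. $A_\tau D_j A_\tau^{-1} F = \xi_j F$, which is the first assertion. (One should be slightly careful about which side the action falls on and about complex conjugation conventions in \eqref{tipi}, but these only affect whether the scalar is $c_j(\tau)$ or $\overline{c_j(\tau)}$, and for symmetric $D_j$ the eigenvalue is real, matching $\Sigma_\cD \subset \bR^\ell$.)

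\textbf{Conclusion of the estimate.} Once the scalar action is established, the norm inequality is immediate. For any $D = \prod_{j} D_j^{\al_j}$ built from operators in $\cD'$, say corresponding to a multiindex $\al'$ supported on the first $\ell'$ coordinates, we get $A_\tau D A_\tau^{-1} F(h) = (\xi'_\tau)^{\al'} F(h)$ pointwise, so
\[
\|A_\tau D A_\tau^{-1} F(h)\|_{\rm HS} = |(\xi'_\tau)^{\al'}|\,\|F(h)\|_{\rm HS} \le |\xi'_\tau|^{|\al'|}\,\|F(h)\|_{\rm HS},
\]
and in particular $\le |\xi'_\tau|\,\|F(h)\|_{\rm HS}$ when $|\al'| = 1$, as stated. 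Since the statement as written only concerns a single $D \in \cD'$, the bound $\|A_\tau D A_\tau^{-1} F(h)\|_{\rm HS} \le |\xi'_\tau|\,\|F(h)\|_{\rm HS}$ follows directly from $|\xi_j| \le |\xi'_\tau|$.

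\textbf{Expected obstacle.} There is essentially no analytic obstacle here; the lemma is a bookkeeping statement. The only point requiring care is the compatibility of three normalizations: (a) the identification of $D_j \in \cD'$ with a central element of $U(\mathfrak{k})$ and its scalar on $\spaziorp_\tau$; (b) the sign/conjugation conventions in the $K$-type projection \eqref{tipi} and in the definition of $A_\tau$; and (c) the convention in the embedding $\iota'_\cD$ so that the resulting scalar is literally $\xi_j$ rather than its conjugate or negative. For symmetric generators and real eigenvalues this is automatic, so the proof reduces to a short unwinding of definitions followed by the trivial pointwise estimate above.
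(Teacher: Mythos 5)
Your proposal is correct, and the underlying idea is the same one the paper uses: since $D_j$ lies in the center of $\bD(K)$, it acts as the scalar $\xi_j$ on the $\tau$-isotypic component, so conjugation by $A_\tau$ produces the scalar multiplication operator. The paper's proof realizes this somewhat differently from yours: rather than unwinding the $A_\tau$ integral and transferring $D_j$ to $d\tau$ of a central element of $U(\mathfrak{k})$ (which amounts to re-deriving, for this special case, the change-of-variables step of Lemma~\ref{lem:simmetrizzaz-vettoriale}, proved later in the paper), the paper works directly at the level of $f=A_\tau^{-1}F$ on $G$: by \eqref{tipi}, $f=f*_K(d_\tau\overline{\chi_\tau})$, and the bi-invariance of the central operator $D_j$ lets one move it across the $K$-convolution to give $D_jf=f*_K(d_\tau D_j\overline{\chi_\tau})=\xi_jf$, since $\xi_j$ is by definition the $D_j$-eigenvalue of $\overline{\chi_\tau}$; applying $A_\tau$ to both sides finishes. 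This avoids the (short but nontrivial) change of variables you allude to but leave implicit in the phrase ``transfers the action of $D_j$ onto $\tau$,'' and it also automatically settles your normalization concern (a), (b), (c) because the scalar $\xi_j$ enters exactly as it is defined. Both routes are short and correct; yours is slightly more direct in that it stays in the $\mathrm{End}(\spaziorp_\tau)$-valued picture, while the paper's has the advantage of relying on nothing beyond \eqref{tipi} and the definition of $\xi_j$.
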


\begin{proof}
Let $\xi'_\tau=(\xi_1,\ldots,\xi_{\ell'})$, where $\xi_j$ is the eigenvalue   of $\overline\chi_\tau$ under $D_j\in \cD'$.  
Given $F$ in $C^\infty\cap\LduematrH$, the function  $f =A_\tau\inv F$
  is in $C^\infty\cap L^2(G)^{{\rm Int}(K)}_\tau$, so that by~\eqref{tipi}
  $$
D_j f=D_j (f*_K(d_\tau\overline{\chi_\tau})) =f*_K(d_\tau D_j\overline{\chi_\tau})=\xi_j\,f*_K(d_\tau\overline{\chi_\tau})=\xi_j f.
$$
Therefore   applying the isomorphism $A_\tau$ we obtain
\[
A_\tau D_j A_\tau\inv F=A_\tau D_j f=\xi_j\, A_\tau f=\xi_j\, F
\]
and the thesis follows.
\end{proof}

The following proposition establishes that $A_\tau$ is a topological isomorphisms from $\cS(G)^{{\rm Int}(K)}_\tau$ onto $\SmatrH$.

As $N$-order Schwartz norm of a function $F$ in $\SmatrH$ 
we take
\[
\|
F\|_{(N)}=\max_{  |\beta|\leq N}\|(1+|\cdot|)^N\,(D^\tau)^\beta F\|_{L^2}
\] 
where for every multiindex $\beta=(\beta_1,\dots,\beta_{\ell''})$ 
$$
(D^\tau)^\beta= A_\tau\, D_{\ell'+1}^{\beta_{1}}\cdots  D_{\ell}^{\beta_{\ell''}}\, A_\tau\inv.
$$

\begin{proposition}\label{norme-f-Af} Let $\tau\in \widehat K$ and $N$ in $\bN$. The following estimates hold 
\[
 \sqrt{d_\tau}\,\|A_\tau f\| _{(N)}\leq   \|f\|_{(N)}\leq (1+|\xi'_\tau|)^N\, \sqrt{d_\tau}\, \|A_\tau f\|_{(N)} \qquad \forall f\in \cS(G)^{{\rm Int}(K)}_{\tau } \ .
\]
\end{proposition}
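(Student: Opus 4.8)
The plan is to deduce the two estimates from three ingredients. The first, already recorded in the text, is that $\sqrt{d_\tau}\,A_\tau$ is a unitary of $L^2(G)^{{\rm Int}(K)}_\tau$ onto $\LduematrH$. The second is that $A_\tau$ commutes with multiplication by the weight $(1+|\cdot|)^N$: with $U$ chosen adapted to the product, e.g., $U=K\times U_H$ with $U_H$ a $K$-invariant compact symmetric generating neighborhood of $e$ in $H$, the length $|\cdot|$ on $G=K\ltimes_\delta H$ coincides, off the null set $K\times\{e\}$, with a length function of the $H$-variable alone; in particular it is (a.e.) bi-$K$-invariant, so it factors through the $K$-average defining $A_\tau$ and transports to the length used in the norm of $\SmatrH$. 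The third is that conjugation by $A_\tau$ sends $D_{\ell'+1}^{\beta_1}\cdots D_\ell^{\beta_{\ell''}}$ to $(D^\tau)^\beta$ by definition and, by Lemma~\ref{operatoriDscalari}, makes each $D_j\in\cD'$ act on functions of $K$-type $\tau$ as the scalar $\xi_j$. These mesh because both $\bD(G)^{{\rm Int}(K)}$ and multiplication by a bi-$K$-invariant function preserve the $K$-type $\tau$ — for the operators this is $D_j\big(f*_K(d_\tau\overline{\chi_\tau})\big)=(D_jf)*_K(d_\tau\overline{\chi_\tau})$, from left-invariance — so every function produced below stays in $L^2(G)^{{\rm Int}(K)}_\tau$, where the first and third ingredients keep applying.

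For the lower bound I would fix a $\cD''$-multi-index $\beta$ with $|\beta|\le N$ and abbreviate $D^\beta=D_{\ell'+1}^{\beta_1}\cdots D_\ell^{\beta_{\ell''}}$. The second and third ingredients give $(1+|\cdot|)^N(D^\tau)^\beta A_\tau f=A_\tau\big((1+|\cdot|)^N D^\beta f\big)$, and since $(1+|\cdot|)^N D^\beta f\in L^2(G)^{{\rm Int}(K)}_\tau$ the unitarity of $\sqrt{d_\tau}A_\tau$ gives
\[
\sqrt{d_\tau}\,\big\|(1+|\cdot|)^N(D^\tau)^\beta A_\tau f\big\|_{L^2}=\big\|(1+|\cdot|)^N D^\beta f\big\|_{L^2}\ .
\]
The right-hand side is exactly the term of $\|f\|_{(N)}$ indexed by $\alpha=(0,\dots,0,\beta_1,\dots,\beta_{\ell''})$, so taking the maximum over $|\beta|\le N$ yields $\sqrt{d_\tau}\,\|A_\tau f\|_{(N)}\le\|f\|_{(N)}$.

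For the upper bound I would take an arbitrary $\alpha$ with $|\alpha|\le N$, split it into its $\cD'$-part $\alpha'$ and its $\cD''$-part $\alpha''$, and write $D^\alpha f=D^{\alpha'}(D^{\alpha''}f)$ with $D^{\alpha'}=D_1^{\alpha_1}\cdots D_{\ell'}^{\alpha_{\ell'}}$. Because $D^{\alpha''}f$ is still of $K$-type $\tau$, applying the third ingredient once per factor of $D^{\alpha'}$ gives $D^\alpha f=(\xi'_\tau)^{\alpha'}\,D^{\alpha''}f$, with $|(\xi'_\tau)^{\alpha'}|\le|\xi'_\tau|^{|\alpha'|}\le(1+|\xi'_\tau|)^N$. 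Feeding this into the norm identity of the previous paragraph for the index $\alpha''$ (which has $|\alpha''|\le N$) bounds $\big\|(1+|\cdot|)^N D^\alpha f\big\|_{L^2}$ by $(1+|\xi'_\tau|)^N\sqrt{d_\tau}\,\|A_\tau f\|_{(N)}$; maximizing over $|\alpha|\le N$ then gives $\|f\|_{(N)}\le(1+|\xi'_\tau|)^N\sqrt{d_\tau}\,\|A_\tau f\|_{(N)}$.

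I do not anticipate a genuine obstacle here. The argument is just the compatibility of three structures — the isometry $\sqrt{d_\tau}A_\tau$, the transport of the weight $(1+|\cdot|)^N$, and the scalar action of $\cD'$ on the $K$-type $\tau$ — held together by the fact that every intermediate function stays of $K$-type $\tau$ in $L^2$. The one point deserving care is the second ingredient: verifying that $(1+|\cdot|)^N$ genuinely factors through the $K$-average defining $A_\tau$, which is why the defining neighborhood $U$ is taken adapted to the semidirect-product structure.
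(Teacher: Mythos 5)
Your proof is correct and follows essentially the same route as the paper: both bounds come from combining the unitarity of $\sqrt{d_\tau}\,A_\tau$, the fact that $A_\tau$ intertwines the two weights $(1+|\cdot|)^N$, and Lemma~\ref{operatoriDscalari} to turn the $\cD'$-factors into scalars bounded by $(1+|\xi'_\tau|)^N$. The paper's proof is terser and leaves the second ingredient implicit; your observation that, with $U=K\times U_H$ and $U_H$ a $K$-invariant symmetric neighborhood, the length $|\cdot|$ on $G$ agrees a.e.\ with a length on $H$ and hence commutes with the $K$-averaging in $A_\tau$ is exactly the justification the paper omits.
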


\begin{proof} Let $f\in \cS(G)^{{\rm Int}(K)}_{\tau }$. 
Since $\sqrt{d_\tau}A_\tau$ is an isometry on $L^2$-spaces,
 we have
\beas
\|A_\tau f\|_{(N)}&=\max_{  |\beta|\leq N}\|(1+|\cdot |)^N\,(D^\tau)^\beta A_\tau f\|_{L^2}
\\
&=\max_{ |\beta|\leq N}\|A_\tau (1+|\cdot|)^N\,(D'')^\beta f\|_{L^2}
\\
&=\frac{1}{\sqrt{d_\tau}} \max_{  |\beta|\leq M}\|(1+|\cdot|)^N\,(D'')^\beta f\|_{L^2}
\\
&\leq \frac{1}{\sqrt{d_\tau}} \|f\|_{(N)}\ ,
\eeas
where $(D'')^\beta=(D_{\ell'+1})^{\beta_{1}}\cdots  (D_{\ell})^{\beta_{\ell''}}$. 
Conversely, using Lemma~\ref{operatoriDscalari} with $F=A_\tau f$
\begin{align*}
\|f\|_{(N)}&=\max_{  |\beta|\leq N}\|(1+|\cdot|)^N\,D^\beta A_\tau\inv F\|_{L^2}
\\
&=\max_{  |\beta|\leq N}\|A_\tau\inv (1+|\cdot|)^N\,A_\tau D^\beta A_\tau\inv F\|_{L^2}
\\
&\leq {\sqrt{d_\tau}}\, (1+|\xi'_\tau|)^N\,\max_{  |\beta|\leq N}\|(1+|\cdot|)^N\,(D^\tau)^\beta F\|_{L^2}
\\
&= {\sqrt{d_\tau}}\, (1+|\xi'_\tau|)^N\, \|F\|_{(N)}\ .
\qedhere
\end{align*}
\end{proof}
  
  \begin{corollary}\label{isoAtau}
  The map $A_\tau$ is a topological isomorphism from $\cS(G)^{{\rm Int}(K)}_\tau\!$ onto $\SmatrH$.
  \end{corollary}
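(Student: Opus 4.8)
The statement is, as its name indicates, little more than a repackaging of Proposition~\ref{norme-f-Af}, so the plan is short. Recall that $A_\tau$ is already known to be an algebra isomorphism of $L^1(G)^{{\rm Int}(K)}_\tau$ onto $\LmatrH$, with explicit inverse $A_\tau^{-1}F(k,h)=d_\tau\,\tr\big(\tau(k^{-1})F(h)\big)$. To promote this to a topological isomorphism of the Schwartz spaces $\cS(G)^{{\rm Int}(K)}_\tau$ and $\SmatrH$, I would split the task into two parts: first, verify that $A_\tau$ and $A_\tau^{-1}$ carry the Schwartz class onto the Schwartz class, i.e. that $A_\tau$ restricts to a bijection between $\cS(G)^{{\rm Int}(K)}_\tau$ and $\SmatrH$; second, read off bicontinuity from Proposition~\ref{norme-f-Af}, which is immediate since for each $N$ that proposition gives, with $\tau$ fixed, $\sqrt{d_\tau}\,\|A_\tau f\|_{(N)}\le\|f\|_{(N)}\le(1+|\xi'_\tau|)^N\,\sqrt{d_\tau}\,\|A_\tau f\|_{(N)}$, so that both $A_\tau$ and $A_\tau^{-1}$ (the latter applied to $f=A_\tau^{-1}F$ in the right-hand inequality, once the first part is available) are continuous for the two Fréchet topologies.

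For the bijection of Schwartz classes I would argue directly from the two integral formulas defining $A_\tau$ and $A_\tau^{-1}$. If $f\in\cS(G)^{{\rm Int}(K)}_\tau$, then $A_\tau f(h)=\int_K f(k,h)\,\tau(k)\,dk$: since $K$ is compact, $f$ is jointly smooth and rapidly decaying on $G\cong K\times H$, and integrating against the smooth ${\rm End}(\spaziorp_\tau)$-valued function $k\mapsto\tau(k)$ (with $\|\tau(k)\|_{\rm op}=1$) preserves smoothness and rapid decay in $H$, because decay with respect to the word-length of $G$ is equivalent to decay in $|h|_H$; hence $A_\tau f\in\cS\big(H,{\rm End}(\spaziorp_\tau)\big)$, and it is $K$-equivariant by construction, so $A_\tau f\in\SmatrH$. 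Conversely, let $F\in\SmatrH$. As $H$ has polynomial growth, Schwartz functions on $H$ are integrable, so $F\in\LmatrH$ and $f:=A_\tau^{-1}F\in L^1(G)^{{\rm Int}(K)}_\tau$ is well defined, of type $\tau$, with $A_\tau f=F$; and since $f(k,h)=d_\tau\,\tr\big(\tau(k^{-1})F(h)\big)$ is built from the smooth representation $\tau$ on the compact factor $K$ and from the Schwartz function $F$ on $H$, it is smooth and rapidly decaying on $G$, hence $f\in\cS(G)^{{\rm Int}(K)}_\tau$. Together with the bicontinuity above, this shows $A_\tau$ is a topological isomorphism.

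I do not expect a genuine obstacle here---this is precisely why the result appears as a corollary. The only points needing verification beyond Proposition~\ref{norme-f-Af} are the preservation of the Schwartz class under $A_\tau^{\pm1}$, which is routine once one uses the explicit formulas together with the compactness of $K$ and the smoothness of $\tau$, and, implicitly, the fact that the $N$-norms $\|F\|_{(N)}$ introduced on $\SmatrH$ via the operators $(D^\tau)^\beta$ define the usual Schwartz topology of $\SmatrH$---a standard consequence of the fact that, on $K$-equivariant functions, the $K$-invariant differential operators control all derivatives. The substantive work of this section is therefore entirely concentrated in Proposition~\ref{norme-f-Af}; its constants depend on $\tau$, which is harmless for the present corollary but will be essential in the later sections in order to track rapid decay in $\tau$.
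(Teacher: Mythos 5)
Your proof is correct and follows the approach the paper implicitly takes: the corollary is stated immediately after Proposition~\ref{norme-f-Af} with no proof supplied, so the authors regard it as the repackaging you describe. Your two-step plan (bijection of Schwartz classes via the explicit integral formulas for $A_\tau$ and $A_\tau^{-1}$ together with compactness of $K$ and polynomial growth of $H$; then bicontinuity, with $\tau$ fixed, from the two-sided estimate in Proposition~\ref{norme-f-Af}) is exactly what is needed, and you are right to flag the implicit point that the seminorms built from $(D^\tau)^\beta$ define the Schwartz topology on $\SmatrH$---the paper itself introduces these as ``the'' Schwartz norms on $\SmatrH$ and leaves that identification tacit.
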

  
\section{$\tau$-spherical analysis on the euclidean motion group $\grpSO_n\ltimes\bR^n$}
\bigskip

In this section we introduce the euclidean motion group $G=\grpSO_n\ltimes_\delta\bR^n$ for general $n$. We recall some basic facts about the algebra $\big(\bD(\bR^n)\otimes {\rm End}(\spaziorp_\tau)\big)^{K}$ and  
the Gelfand spectrum $\Sigma^\tau$ of $L^1\big(\bR^n,{\rm End}(\spaziorp_\tau)\big)^{K}$.

\subsection{The euclidean motion group} The euclidean motion group is the semidirect product $G=\grpSO_n\ltimes_\delta\bR^n$ where the action $\delta$ 
of $K=\grpSO_n$ on $\bR^n$
is the natural one, i.e., $\delta(k)y=k\, y$.
We write its elements as pairs $(k,y)\in \grpSO_n\times \bR^n$ with product law
$$
(k,y)(k',y')=(kk',y+k\,y')\ 
$$ 
so that the action of $K$ on $G$ by inner automorphisms is given by 
$$
k\,\,:\,\, (k',y')\longrightarrow (k\,k'\, k^{-1}, k\, y')\ .
$$
It is well known that the algebra $L^1(G)^{{\rm Int}(K)}$ is commutative, i.e.,
the pair $(G,K)$ 
is a strong Gelfand pair~\cite{RS,Vin,Y}.

\subsection{Invariant differential operators}\label{sect:invdiffop}
Differential operators in $\bD(G)^{\rm{Int}(K)}$   are obtained from ${\rm Ad}(K)$--invariant polynomials 
 on the Lie algebra $\fg$ of $G$ via the symmetrization procedure described in~\cite[formula (2.4)]{RS}.

Indeed, let $p$ be in $\cP(\fg)^{{\rm Ad}(K)}$ and note that $\fg$ can be identified with $\fk\oplus \bR^n$, where $\fk=\mathfrak{so}_n$, so that $p$ satisfies
 $$
p(X,y)=p(Ad(k)(X,y))=p(kXk\inv,ky) \qquad \forall k\in K,\quad (X,y)\in \fk\oplus\bR^n\ .
$$
Then,
using the symmetrization operator $\la$ on the universal enveloping algebra $\fU(\fk)$ \cite[p.~180]{Vara}
 and Fourier transform in $y$, we construct
the corresponding operator $D_p\in \bD(G)^{\rm{Int}(K)}$. 
Explicitly, for the choice of an orthonormal basis $\{X_1,\ldots,X_{n(n-1)/2}\}$ of $\mathfrak{so}_n$
we write $X=\sum_\ell x_\ell \, X_\ell$ and if
\be\label{e:forma-polinomio}
p(X,y)=\sum_{\al,\beta}c_{\al,\beta}\,x^\al y^\beta \ ,
\ee
we define 
\be\label{D_p}
D_p= \sum_{\al,\beta}c_{\al,\beta}\la((ix)^\al) \,(-i\de_y)^\beta\ .
\ee
}

The  coefficient $i$ in both factors has been introduced in order to map real polynomials into symmetric operators.

For the reader's convenience we recall that 
\[
(\lambda((ix)^\al)f)(g)=i^{|\al|}\, \partial_t^\alpha f(g\exp(t_1X_1+\cdots+t_mX_m))|_{t=(t_1,\ldots,t_m)=0}
\]
with $m=n(n-1)/2$, and that we choose the following normalization of the Fourier transform of an integrable function $f$ on $\bR^n$
\[
\mathcal F f(\eta)=\widehat f (\eta)=\int_{\bR^n}f(y)\, e^{-iy\cdot \eta}\,dy.
\]

Explicit
families of generating polynomials of $\cP(\fg)^{{\rm Ad}(K)}$ can be found in   [FRY1, Thm. 7.5(1)].
It is convenient for us to choose a set of generators 
$\{p_1, \dots p_{\ell'}, q_1,\ldots, q_{\ell''}\}$, where  the  polynomials $p_j$    depend only on  the variables in $\fk$,  the polynomial $q_1=|y|^2$   depends only on the $\bR^n$ variables and the polynomials $q_j$ with $j>1$ may depend on both groups of variables.

Calling $D_{p_j}$, $D_{q_j}$ the symmetrizations  of the above polynomials,
the operator $D_{q_1}$ is (minus) the laplacian on $\bR^n$, i.e., 
\[
D_{q_1}=\Delta=-\sum_{j=1}^n \partial_j^2
\]
 and
the operators $D_{p_j}$ form the set $\cD'$ in Section~\ref{generale} so that their eigenvalues on each spherical function
  depend only on its type.

\subsection{$K$-type subalgebras and ${\rm End}(\spaziorp_\tau)$-valued spherical analysis}
We now focus on the commutative triple $(G,K,\tau)$ with $\tau\in \widehat K$
and understand the structure of ${\rm End}(\spaziorp_\tau)$--valued functions $F$ on $\bR^n$ 
satisfying the $K$-equivariance condition \eqref{equivariance}, where $\delta(k)y=k\,y$.

This condition implies that such a function $F$ is uniquely determined by its values for $y=r\base$ where $\base=(1,0,\dots,0)$ and $r\ge0$, and that
$F(r\base)$ commutes with $\tau(k)$ for all $k$ in the stabilizer $K_\base$ of $\base$, i.e., $k=\begin{bmatrix}1&0\\0&k'\end{bmatrix}$ with $k'\in \grpSO_{n-1}$. 

Since $(\grpSO_n,\grpSO_{n-1})$ is also a strong Gelfand pair, the restriction of $\tau$ to $K_\base$ decomposes without multiplicities,
\be\label{restriction}
\tau_{|_{SO_{n-1}}}=\sigma_0+\cdots+\sigma_\finetau\ ,\qquad \spaziorp_\tau=\spaziorpU_{\sigma_0}\oplus\cdots\oplus \spaziorpU_{\sigma_\finetau}\ ,
\ee
where $\spaziorpU_{\sigma_j}$ is the representation space of $\sigma_j$ and the $\sigma_j$ are mutually inequivalent . Then, by Schur's lemma,
\be\label{diagonal}
 F(ro)=\sum_{s=1}^\finetau b_{s,\tau}(F,r)P_{s,\tau} \ ,\qquad b_{s,\tau}(F,r)\in\bC\ ,
 \ee
where $P_{s,\tau}$
 is the orthogonal projection of $\spaziorp_\tau$ onto $\spaziorpU_{\sigma_s}$, so that 
 $$
 b_{s,\tau}(F,r)=\frac1{d_{\sigma_s}}\tr\big(P_{s,\tau}     F(ro) \big),
 $$
 where $d_{\sigma_s}=\mathrm{dim}\,\spaziorpU_{\sigma_s}$.

Spherical functions have been determined in \cite[Sect.~7,~11]{RS} and we recall them in the following proposition.
The Fourier transform of ${\rm End}(\spaziorp_\tau)$-valued functions is
defined componentwise.

\begin{proposition}\label{sferichematr}
The Gelfand spectrum $\Sigma^\tau$ of $L^1\big(\bR^n,{\rm End}(\spaziorp_\tau)\big)^{K}$ consists of the multiplicative functionals 
$$
F\longmapsto b_{s,\tau}(\widehat F,\ro)
=\int_{\bR^n}\frac1{d_\tau}\tr\big(F(y)\Phi^\tau_{\ro,s}(-y)\big)\,dy\ ,
$$
where $\Phi^\tau_{\ro,s}$ is the ${\rm End}(\spaziorp_\tau)$-valued spherical function
\be\label{Phi}
\Phi^\tau_{\ro,s}(y)=\frac{d_\tau}{d_{\sigma_s}} \int_{K} e^{i\ro\base\cdot(ky)}\tau (k\inv)P_{s,\tau}\,\tau (k)\,dk\ 
\ee 
with $d_\tau= \mathrm{dim}\,\spaziorp_\tau$ and $d_{\sigma_s}=\mathrm{dim}\,\spaziorpU_{\sigma_s}$, $s=0,\ldots,\finetau$.
\end{proposition}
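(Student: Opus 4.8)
The plan is to transport the statement to the Fourier-transform side, where the convolution algebra $\cA_\tau:=L^1\big(\bR^n,{\rm End}(\spaziorp_\tau)\big)^K$ becomes an algebra of matrix-valued functions that is simultaneously block-diagonalised by the fixed multiplicity-free decomposition \eqref{restriction}. For $F\in\cA_\tau$, taking the Fourier transform entrywise, $\widehat F$ is continuous and vanishes at infinity, and the $K$-equivariance \eqref{equivariance} (with $\delta(k)y=ky$) turns into $\widehat F(k\eta)=\tau(k)\widehat F(\eta)\tau(k^{-1})$; with the convolution convention of the text one has $\widehat{F_1*F_2}(\eta)=\widehat{F_2}(\eta)\widehat{F_1}(\eta)$, so $\mathcal F$ identifies $\cA_\tau$ with a subalgebra of $C_0\big(\bR^n,{\rm End}(\spaziorp_\tau)\big)$. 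Restricting to the ray, $\widehat F(\ro\base)$ commutes with $\tau(k)$ for every $k$ in the stabiliser $K_\base\cong\grpSO_{n-1}$, so by \eqref{restriction} and Schur's lemma $\widehat F(\ro\base)=\sum_{s=0}^{\finetau}b_{s,\tau}(\widehat F,\ro)\,P_{s,\tau}$, exactly as in \eqref{diagonal}, with $b_{s,\tau}(\widehat F,\ro)=\tfrac1{d_{\sigma_s}}\tr\big(P_{s,\tau}\widehat F(\ro\base)\big)$.

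Since the $P_{s,\tau}$ are mutually orthogonal idempotents with sum the identity, the identity $\widehat{F_1*F_2}(\ro\base)=\widehat{F_2}(\ro\base)\widehat{F_1}(\ro\base)$ yields $b_{s,\tau}(\widehat{F_1*F_2},\ro)=b_{s,\tau}(\widehat{F_1},\ro)\,b_{s,\tau}(\widehat{F_2},\ro)$, so each $F\mapsto b_{s,\tau}(\widehat F,\ro)$, $s=0,\dots,\finetau$, $\ro\ge0$, is a multiplicative functional on $\cA_\tau$. To prove that these exhaust the Gelfand spectrum, I would observe that $F\mapsto\big(\ro\mapsto(b_{0,\tau},\dots,b_{\finetau,\tau})(\widehat F,\ro)\big)$ embeds $\cA_\tau$ into the algebra of $\bC^{\finetau+1}$-valued functions on $[0,\infty)$, and then use the regularity of the scalar Fourier transform on $\bR^n$ to show that this image is dense, in the $C_0$-sense, in the algebra attached to the quotient of $\{0,\dots,\finetau\}\times[0,\infty)$ obtained by collapsing the points $(s,0)$; it follows that every character of $\cA_\tau$ is evaluation at a point of that quotient. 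Alternatively, one simply invokes the classification of bounded $\tau$-spherical functions in \cite[Sect.~7,~11]{RS}.

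Finally I would make the functionals explicit. Using $\widehat F(\ro\base)=\int_K\tau(k^{-1})\widehat F(k\ro\base)\tau(k)\,dk$ — the integrand being independent of $k$ by equivariance — expanding $\widehat F(k\ro\base)=\int_{\bR^n}F(y)e^{-i\ro(k\base)\cdot y}\,dy$, rewriting $(k\base)\cdot y=\base\cdot(k^{-1}y)$, cycling the trace and replacing $k$ by $k^{-1}$, one is led to $b_{s,\tau}(\widehat F,\ro)=\tfrac1{d_\tau}\int_{\bR^n}\tr\big(F(y)\Phi^\tau_{\ro,s}(-y)\big)\,dy$ with $\Phi^\tau_{\ro,s}$ exactly as in \eqref{Phi}; these are bounded, $\|\Phi^\tau_{\ro,s}(y)\|_{\rm op}\le d_\tau/d_{\sigma_s}$ for all $y$, being a $K$-average of unitary conjugates of $P_{s,\tau}$ times a unimodular phase, which confirms that they are the bounded $\tau$-spherical functions realising the points of $\Sigma^\tau$. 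The \emph{main obstacle} is the exhaustion step of the second paragraph: producing the bounded spherical functions is a short computation with Schur's lemma and the Fourier transform, but checking that $\cA_\tau$ has no further characters is exactly where one must appeal either to the regularity of the group Fourier transform or to the spherical-function analysis of \cite{RS}.
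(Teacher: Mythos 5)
Your proposal is correct and follows essentially the same route as the paper: the paper's proof consists exactly of your third-paragraph computation (Schur's lemma on the ray $\ro\base$, insertion of a $K$-average, and the equivariance/trace manipulation producing \eqref{Phi}), while multiplicativity and the exhaustion of the spectrum are delegated to \cite[Sect.~7,~11]{RS}, which is precisely the fallback you also invoke. Note only that your sketched alternative for the exhaustion step — density of the Fourier image in a $C_0$-algebra over the quotient of $\{0,\dots,\finetau\}\times[0,\infty)$ — would not suffice as stated, because characters of $L^1\big(\bR^n,{\rm End}(\spaziorp_\tau)\big)^{K}$ are only required to be continuous for the $L^1$ norm, so sup-norm density does not pin down the maximal ideal space; the appeal to the classification of bounded spherical functions in \cite{RS} is the correct closing step, exactly as in the paper.
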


\begin{proof}
For the reader's convenience we explicit here  the link between the two formulas in the statement and we refer to~\cite{RS} for the other details.
By~\eqref{equivariance}, we have
\begin{align*}
b_{s,\tau}(\widehat F,\ro)
&=\frac1{d_{\sigma_s}}\tr\big(P_{s,\tau}  \widehat  F(\ro o) \big)
\\
&=\frac1{d_{\sigma_s}}\int_{\bR^n}e^{-i \ro\base\cdot y}\,\tr \big(P_{s,\tau} F(y)\big)\,dy\  
\\
&=\frac1{d_{\sigma_s}}\int_K\int_{\bR^n}e^{-i \ro\base\cdot(ky)}\,\tr \big(P_{s,\tau} F(ky)\big)\,dy\,dk\ 
\\
&=\frac1{d_{\sigma_s}}\int_K\int_{\bR^n}e^{-i \ro\base\cdot(ky)}\,\tr \big(
\tau (k\inv)P_{s,\tau}\,\tau (k) F(y)\big)\,dy\,dk\ 
\\
&
=\int_{\bR^n}\frac1{d_\tau}\tr\big(F(y)\Phi^\tau_{\ro,s}(-y)\big)\,dy\ .
\qedhere
\end{align*}
\end{proof}

\medskip
Via the map $A_\tau$ we infer that the multiplicative functionals  of $L^1(G)^{{\rm Int}(K)}_\tau$ 
are given by  
$$
f\longmapsto \int_{K\ltimes \bR^n}f(k,y)\ph^\tau_{\ro,s}\big((k,y)\inv\big)\,dk\,dy\ ,
$$
where the functions
$$
\ph^\tau_{\ro,s}(k,y)=\frac1{d^2_\tau}A_\tau\inv\Phi^\tau_{\ro,s}(k,y)=
\frac1{d_\tau}\,\tr\big(\tau(k)\inv \Phi^\tau_{\ro,s}(y)\big)\ ,
$$
are the $\tau$-spherical functions for the Gelfand pair $(K\ltimes_{\rm Int} G,K)$. Constants are arranged 
in such a way that $\Phi^\tau_{\ro,s}(0)$ is the identity operator and $\ph^\tau_{\ro,s}(e,0)=1$.

\subsection{Differential operators in $\big(\bD(\bR^n)\otimes {\rm End}(\spaziorp_\tau)\big)^{K}$}

\quad
Suppose that $p$ is in $\cP(\fg)^{{\rm Ad}(K)}$.
In Section~\ref{sect:invdiffop} we developed a rule
that generates the differential operator $D_p$ in $\bD(G)^{\rm{Int}(K)}$ (see formulas~\eqref{e:forma-polinomio} and~\eqref{D_p}).

We can then obtain the differential operator $D_p^\tau$ in $\big(\bD(\bR^n)\otimes {\rm End}(\spaziorp_\tau)\big)^{K}$
simply conjugating by $A_\tau$.

The purpose of the following lemma is to relate conjugation by $A_\tau$ to the symmetrization procedure, 
specializing~\cite[Corollary~2.3]{RS} to our case.

\begin{lemma}\label{lem:simmetrizzaz-vettoriale} Suppose that $p$ is as in \eqref{e:forma-polinomio}. Then for every $F$ in $\Smatr$ 
$$
D^\tau_pF(y)=\sum_{\al\beta}c_{\al\beta} \,(-i\partial_y)^\beta F(y)\,  d\tau(\lambda((-ix)^\al))
$$
so that
\[
\widehat{D^\tau_pF}(\eta)=\widehat{F}(\eta)\,\sum_{\al\beta}c_{\al\beta} \,\eta^\beta   d\tau(\lambda((-ix)^\al))
\]
\end{lemma}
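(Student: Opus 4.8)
\textbf{Proof plan for Lemma~\ref{lem:simmetrizzaz-vettoriale}.}
The plan is to deduce the formula by computing $D^\tau_p F = A_\tau D_p A_\tau^{-1} F$ directly from the defining formula~\eqref{D_p} for $D_p$ together with the explicit form~\eqref{tipi}/\eqref{equivariance} of the correspondence $A_\tau$. First I would recall that, by~\eqref{D_p}, $D_p = \sum_{\al,\beta} c_{\al,\beta}\,\la((ix)^\al)\,(-i\de_y)^\beta$, where $\la((ix)^\al)$ acts by right-invariant differentiation along the $\grpSO_n$-directions and $(-i\de_y)^\beta$ is a constant-coefficient operator in the $\bR^n$-variable. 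The two factors act on different sets of variables, so on a function $f(k,y)$ of $K$-type $\tau$ they commute and can be treated separately. Writing $f = A_\tau^{-1}F$, i.e.\ $f(k,y) = d_\tau\,\tr\big(\tau(k^{-1})F(y)\big)$, the euclidean derivatives pass straight through the trace: $(-i\de_y)^\beta f(k,y) = d_\tau\,\tr\big(\tau(k^{-1})\,(-i\de_y)^\beta F(y)\big)$, which corresponds under $A_\tau$ to the operator $F\mapsto (-i\de_y)^\beta F$. It remains to identify what $A_\tau \la((ix)^\al) A_\tau^{-1}$ does on the $K$-variable.

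For that step I would use the key fact that $\la((ix)^\al)$ is a bi-$K$-invariant-type differential operator built from the symmetrization of a polynomial on $\fk$, and that differentiating $\tau(k^{-1})$ along one-parameter subgroups produces the differential $d\tau$ of $\tau$. Concretely, for a single basis vector $X_\ell$ one has $\partial_t\,\tau\big((k\exp(tX_\ell))^{-1}\big)\big|_{t=0} = -\,d\tau(X_\ell)\,\tau(k^{-1})$, wait---more carefully, $\tau((k\exp tX_\ell)^{-1}) = \tau(\exp(-tX_\ell))\tau(k^{-1})$, whose $t$-derivative at $0$ is $-d\tau(X_\ell)\tau(k^{-1})$. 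Iterating and symmetrizing, the action of $\la((ix)^\al)$ in the $k$-variable on $\tr(\tau(k^{-1})F(y))$ produces $\tr\big(\,d\tau(\la((-ix)^\al))\,\tau(k^{-1})F(y)\big) = \tr\big(\tau(k^{-1})F(y)\,d\tau(\la((-ix)^\al))\big)$, where the sign flip $(ix)^\al \to (-ix)^\al$ comes from the inversion $k\mapsto k^{-1}$ and the cyclicity of the trace moves the endomorphism factor to the right of $F(y)$. This is exactly the content of \cite[Corollary~2.3]{RS}, which I would invoke rather than reprove the symmetrization bookkeeping in full; the lemma is simply its specialization to $H=\bR^n$ with the euclidean derivatives written out via Fourier transform. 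Assembling the two pieces and summing over $\al,\beta$ gives
\[
D^\tau_p F(y) = \sum_{\al,\beta} c_{\al,\beta}\,(-i\de_y)^\beta F(y)\, d\tau\big(\la((-ix)^\al)\big),
\]
as claimed.

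Finally, taking the Fourier transform in $y$ turns $(-i\de_y)^\beta$ into multiplication by $\eta^\beta$ and leaves the right-multiplication by the fixed endomorphism $d\tau(\la((-ix)^\al))$ untouched, yielding
\[
\widehat{D^\tau_p F}(\eta) = \widehat{F}(\eta)\,\sum_{\al,\beta} c_{\al,\beta}\,\eta^\beta\, d\tau\big(\la((-ix)^\al)\big).
\]
The main obstacle, such as it is, is bookkeeping rather than conceptual: one must be careful about the order of the matrix factors (they end up on the \emph{right} of $\widehat F(\eta)$ because $A_\tau^{-1}$ involves $\tau(k^{-1})$ and convolution/trace cyclicity reverses the order) and about the sign $(ix)^\al \mapsto (-ix)^\al$ coming from the inversion in the definition of $A_\tau^{-1}$. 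Since these are precisely the points settled in \cite[Corollary~2.3]{RS}, the cleanest exposition is to state the specialization and check only the euclidean-variable part and the matrix-ordering conventions explicitly, which is what the displayed proof does.
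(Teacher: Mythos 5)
Your proof is correct and takes essentially the same approach as the paper: a direct computation of $A_\tau D_p A_\tau^{-1}$, separating the $y$-derivatives (which commute with the trace/integral) from the $\fk$-direction derivatives, with the sign $(ix)^\al\mapsto(-ix)^\al$ and the right-multiplication by $d\tau(\la((-ix)^\al))$ arising from the inversion in $k$. The only superficial difference is bookkeeping: the paper applies $A_\tau$ to $D_p f_\tau$ and performs the change of variables $k'=k\exp(tX)$ inside the integral $\int_K f_\tau(\cdot,y)\tau(\cdot)\,dk$, whereas you work through the explicit inverse formula $f(k,y)=d_\tau\tr\big(\tau(k^{-1})F(y)\big)$ and use cyclicity of the trace to move the endomorphism factor to the right; both are the same calculation in different guises, and both defer the symmetrization bookkeeping to \cite[Cor.~2.3]{RS}, which the paper itself cites just before the lemma.
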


\begin{proof} Let $f_\tau=A_\tau^{-1}F$. Then with $m=n(n-1)/2$
\begin{align*}
D^\tau_pF(y)&=A_\tau D_pf_\tau(y)
\\
&=\int_K (D_pf_\tau)(k,y)\, \tau(k)\, dk
\\
&=\sum_{\al\beta}c_{\al\beta} \, i^{|\al|}\,\int_K (\lambda(x^\al)(-i\partial_y)^\beta f_\tau)(k,y)\, \tau(k)\, dk
\\
&=\sum_{\al\beta}c_{\al\beta} \, i^{|\al|}\,\int_K (-i\partial_y)^\beta \, \partial_t^\alpha
f_\tau(k\exp(t_1X_1+\cdots+t_mX_m),y)|_{t=(t_1,\ldots,t_m)=0}\, \tau(k)\, dk
\\
&=\sum_{\al\beta}c_{\al\beta} \, i^{|\al|}\,(-i\partial_y)^\beta \, \partial_t^\alpha\!\left.
\int_K f_\tau(k\exp(t_1X_1+\cdots+t_mX_m),y)\, \tau(k)\, dk
\right|_{t=(t_1,\ldots,t_m)=0}
\\
&=\sum_{\al\beta}c_{\al\beta} \, i^{|\al|}(-i\partial_y)^\beta \left. \partial_t^\alpha\!
\int_K f_\tau(k',y)\, \tau(k')\tau(\exp(-(t_1X_1+\cdots+t_mX_m))\, dk'
\right|_{t=(t_1,\ldots,t_m)=0}
\\
&=\sum_{\al\beta}c_{\al\beta} \, i^{|\al|}(-i\partial_y)^\beta\! \int_K f_\tau(k',y)\,\tau(k')\, dk'\, 
\partial_t^\alpha\tau(\exp(-(t_1X_1+\cdots+t_mX_m))|_{t=(t_1,\ldots,t_m)=0}
\\
&=\sum_{\al\beta}c_{\al\beta} \,(-i\partial_y)^\beta F(y)\, d\tau(\lambda((-ix)^\al)) \ .
\qedhere
\end{align*}
\end{proof}

\section{Equivariant polynomials and the Gelfand spectrum $\Sigma^\tau_\cD$ when $n=3,4$}

From now on we will consider the cases where $n=3,4$.  
In this section we will compute, for any fixed $\tau$ in $\hat K$, the eigenvalues $\xi'_\tau$. 
Next we will consider the representation $\tilde\tau$ on 
$\mathrm{End}(\spaziorp_\tau)$ given by
\begin{equation}\label{e:tildetau}
\tilde\tau(k)A=\tau(k)A\tau(k\inv) \qquad \forall k\in K
\end{equation}
and decompose $\mathrm{End}(\spaziorp_\tau)$ into  irreducible invariant subspaces. The lower dimensional nontrivial subspace contains 
an element which turns out to be important in our analysis, because it coincides with the Fourier transform
of one of the generators $D^\tau_q$ at the basepoint. We shall  denote this element by $\Btau$.
The main result is Lemma~\ref{diagonalWnj} where we determine which polynomials in $\Btau$ 
lie in the various irreducible invariant subspaces, and Proposition~\ref{spettro}, where we determine the Gelfand spectrum $\Sigma^\tau_\cD$.

\subsection{The case where $n=3$}\label{sub:3}
We denote by $\tau_\emme$, with $\emme\in(1/2)\bN$ the irreducible representation of $SU_2$ of dimension $2\emme+1$. For $\emme\in\bN$, $\tau_\emme$ projects to a representation of $SO_3\equiv SU_2/\bZ_2$ and $\widehat{SO_3}=\{\tau_\emme:\emme\in\bN\}$.

The restriction of $\tau_\emme$ to $K_\base\equiv SO_2$ in \eqref{restriction} decomposes as
$$
{\tau_\emme}_{|_{SO_2}}=\sum_{s=0}^{2\emme}\sigma_s\ ,
$$
where $\sigma_s\left(\begin{bmatrix}\cos t&-\sin t\\\sin t&\cos t\end{bmatrix}\right)$ is the character $e^{i(s-\mu)t}$.
So $\spaziorp_{\tau_\mu}=\spaziorpU_{\sigma_0}\oplus\cdots\oplus \spaziorpU_{\sigma_{2\emme}}$
and the spaces $\spaziorpU_{\sigma_s}\sim \bC$ are all one dimensional.

We choose the basis of $\fs\fo_3$  formed by the vectors
\[
X_1=\begin{bmatrix}0&0&0\\0&0&-1\\0&1&0\end{bmatrix}
\qquad
X_2=\begin{bmatrix}0&0&1\\0&0&0\\-1&0&0\end{bmatrix}
\qquad
X_3=\begin{bmatrix}0&-1&0\\1&0&0\\0&0&0\end{bmatrix}
\]
and we identify
\[
X=\begin{bmatrix}0&-x_3&x_2\\x_3&0&-x_1\\-x_2&x_1&0\end{bmatrix}=\sum_{j=1}^3 x_j \, X_j\in\fs\fo_3\ 
\]
with $x=(x_1,x_2,x_3)\in\bR^3$. The following family of 
 invariant polynomials  of $\cP(\fg)^{{\rm Ad}(K)}$ is well known
$$
p_1=|x|^2\ ,\qquad q_1=|y|^2\ ,\qquad q_2=x\cdot y\ .
$$

Then the symmetrization map of Section~\ref{sect:invdiffop} produces  
$D_{p_1}=-\sum_{j=1}^3X_j^2$, a constant multiple of the Casimir operator on $SO_3$, and 
\[
\xi'_{\tau_\emme}=\emme(\emme+1).
\] 
According to Lemma~\ref{lem:simmetrizzaz-vettoriale}, we obtain  the following generating system of $\big(\bD(\bR^3)\otimes {\rm End}(\spaziorp_\tau)\big)^{K}$
$$
D_{q_1}^{\tau_\emme}=\Delta I\ ,\qquad \DD_{\tau_\emme}=D_{q_2}^{\tau_\emme}=-\sum_{j=1}^3\de_{y_j}\,d\tau_\emme(X_j)\ .
$$

Identifying ${\rm End}(\spaziorp_{\tau_\emme})$ with  $ \spaziorp'_{\tau_\emme}\otimes \spaziorp_{\tau_\emme}$, 
the representation
 $\tilde\tau_\emme$  defined in~\eqref{e:tildetau}
 is equivalent to the tensor product $\tau'_{\emme}\otimes\tau_{\emme}$, where $\tau'_{\emme}\sim\tau_{\emme}$ is the  contragredient representation of $\tau_{\emme}$.

Therefore $\tilde\tau_\emme$  decomposes as
$$
\tilde\tau_\emme \sim\tau'_{\emme}\otimes\tau_{\emme}\,\sim\,\tau_{2\emme}\oplus \tau_{2\emme-1}\oplus\cdots\oplus\tau_1\oplus\tau_0\ ,
$$
and accordingly 
\be\label{Pnn}
{\rm End}(\spaziorp_{\tau_\emme}) 
=\sum_{\ell=0}^{2\emme}\spaziW^{\ell}_{\tau_\emme}\ ,\qquad \spaziW^{\ell}_{\tau_\emme}\sim\tau_{\ell},
\ee
where $\mathrm{dim}\,\spaziW^\ell_{\tau_\emme}=2\ell+1$.

\medskip
Let 
\begin{equation}\label{Btauemme}
B_{\tau_\emme}=\widehat{\DD_{\tau_\emme}}(\base)=-id\tau_\emme(\baseuno )=\sum_{s=0}^{2\emme}(-\emme+s)P_{s,\tau}\ ,
\end{equation}
and consider the $\tilde\tau_{\emme}$-invariant subspace $\spaziW$ generated by $d\tau_\emme(\baseuno )$. Then
$$
\spaziW=\spaz_\bC\big\{\tau_\emme(k)d\tau_\emme(\baseuno )\tau_\emme(k)\inv\, :\,k\in \grpSO_3\big\}=d\tau_\emme(\mathfrak{so}_3) \ ,
$$
which is a 3-dimensional invariant subspace. So it must coincide with the component $\spaziW_{\tau_\emme}^1$ in~\eqref{Pnn}.

For every $\ell=0,\ldots ,2\emme$, let $\diagonali^\ell_{\tau_\emme}$ be the subspace of $K_\base$-invariant 
elements in $\spaziW^\ell_{\tau_\emme}$. 
Since the representation $\tilde\tau_{\emme}$ restricted to $\spaziW^\ell_{\tau_\emme}$ is equivalent to $\tau_{\ell}$, it contains the null weight with multiplicity one, and so $\diagonali^\ell_{\tau_\emme}$ is one-dimensional.  In particular, $B_{\tau_\emme}$ spans $\diagonali^1_{\tau_\emme}$.

\medskip
\subsection{The case where $n=4$}
When $n=4$, it is convenient to use the identifications
$$
SO_4\cong (SU_2\times SU_2)/\bZ_2\cong ({\rm Sp}_1\times {\rm Sp}_1)/\bZ_2\ ,\qquad \bR^4\cong\bH\ ,
$$
with one copy of ${\rm Sp}_1=\{u\in\bH:|u|=1\}$ acting by left multiplication and the other by right multiplication on $\bH$,
\be\label{action}
(u,v)\cdot y=uyv\inv\ .
\ee

According to the previous identifications, the unitary dual $\widehat{SO_4}$ consists of the exterior tensor products $\tau_{\enne,\emme}=\tau_\enne\boxtimes\tau_\emme$, with $\enne,\emme\in(1/2)\bN$ and $\enne+\emme\in\bN$.

The Lie algebra $\mathfrak{so}_4$ can be identified with $\fs\fu_2\times\fs\fu_2\cong \IM \bH\times\IM\bH$. For 
$\alpha, \beta\in \IM\bH$ we shall write
$X_{\alpha, \beta}$ for the corresponding element in $\mathfrak{so}_4$.
Differentiating \eqref{action} we obtain
\[
X_{\alpha, \beta}y=\al y-y\beta\ .
\]
Let
$$
\begin{bmatrix}i&0\\0&-i\end{bmatrix}
\ ,\qquad \begin{bmatrix}0&1\\-1&0\end{bmatrix}
\ ,\qquad \begin{bmatrix}0&i\\i&0\end{bmatrix} \ ,
$$
 be the standard basis of $\fs\fu_2$. Denote these elements by $U_1,U_2,U_3$ if they belong to the first copy of $\fs\fu_2$, and by $V_1,V_2,V_3$ if they belong to the second. 
 The adjoint action of $SO_4$ on $\mathfrak{so}_4\times\bR^4$
is given by
$$
\big({\rm Ad}(u,v)\big)(\al,\beta,y)=(u\al u\inv,v\beta v\inv,uyv\inv)\ ,
$$
and we choose the following family of generating  polynomials of $\cP(\fg)^{{\rm Ad}(K)}$ 
$$
p_1=|\al|^2\ ,\qquad p_2=|\beta|^2\ ,\qquad q_1=|y|^2\ ,\qquad q_2=\RE(\al y\,\overline{y\beta})=\lan \al y,y\beta\ran\ .
$$
Then $\la(p_i)=C_i$, a constant multiple of Casimir operator on the $i$-th copy of $SU_2$, so
$$
\xi'_{\tau_{\enne,\emme}}=\big(2\enne(2\enne+2),2\emme(2\emme+2)\big)\ 
$$
and the ${\rm End}(\spaziorp_{\tau_{\enne,\emme}})$-valued differential operators are
\[
D^{\tau_{\enne,\emme}}_{q_1}=\Delta I\qquad\qquad
\DD_{\tau_{\enne,\emme}}=D^{\tau_{\enne,\emme}}_{q_2}\ .
\]
One could work out an explicit formula for $\DD_{\tau_{\enne,\emme}}$, but for our purposes
it will suffice to know that for $\base=1\in\bH$
\[
\widehat{\DD_{\tau_{\enne,\emme}}}(\base)=-\sum_{j=1}^3d\tau_\enne(U_j)\otimes d\tau_\emme(V_j)\ ,
\]
which follows immediately from Lemma~\ref{lem:simmetrizzaz-vettoriale}.

The stabilizer $K_\base$ of $\base$
 is ${\rm diag}(SU_2\times SU_2)/\bZ_2\cong SO_3$. Since $(\tau_\enne\boxtimes\tau_\emme)_{|_{{\rm diag}(SU_2\times SU_2)}}=\tau_\enne\otimes\tau_\emme$, the analogue of formula~\eqref{restriction} becomes
$$
{\tau_{\enne,\emme}}_{|_{K_\base}}=\tau_{\emme+\enne}\oplus \tau_{\emme+\enne-1}\oplus\cdots\oplus  \tau_{|\emme-\enne|}\ .
$$
Therefore for $j=0,\ldots,2(\emme\wedge\enne)(=2\min\{\emme,\enne\})$,
we let $\sigma_j\sim \tau_{\emme+\enne-j}$ and $\spaziorpU_{\sigma_j}$ for its representation space.

Since every $\grpSU$-factor acts on the corresponding factor space, for the action on ${\rm End}(\spaziorp_{\tau_{\enne,\emme}})$ we have the decomposition
$$
{\tilde{\tau}}_{\enne,\emme}\sim\tau_{\enne,\emme}'\otimes\tau_{\enne,\emme}\sim \sum_{j=0}^{2\enne}\sum_{k=0}^{2\emme} \tau_{j}\boxtimes\tau_{k}
$$
corresponding to
$$
{\rm End}(\spaziorp_{\tau_{\enne,\emme}})=\sum_{j=0}^{2\enne}\sum_{k=0}^{2\emme} \cW^{j,k}_{\tau_{\enne,\emme}}\ ,\qquad \cW^{j,k}_{\tau_{\enne,\emme}}=\cW^j_{\tau_\enne}\otimes\cW^k_{\tau_\emme}\sim \tau_{j}\boxtimes\tau_{k}\ .
$$

We remark that, restricting the representation to $K_\base$, $\cW^{j,k}_{\tau_{\enne,\emme}}$ 
contains a fixed element if and only if $j=k$.

For every $\ell=0,1,\ldots ,2(\emme\wedge \enne)$, we denote by $\diagonali^\ell_{\tau_{\enne,\emme}}$  the subspace of $K_\base$-fixed matrices in $\spaziW^{\ell,\ell}_{\tau_{\enne,\emme}}$. The subspace $\diagonali^\ell_{\tau_{\enne,\emme}}$ is one-dimensional and consists of operators which are constant multiples of the identity operator on each $\spaziorpU_{\sigma_s}$.

Let
\[
B_{\tau_{\enne,\emme}}=\widehat{\DD_{\tau_{\enne,\emme}}}(\base)=-\sum_{j=1}^3d\tau_\enne(U_j)\otimes d\tau_\emme(V_j)\ .
\]
As noted in the previous subsection, $d\tau_\enne(U_j)$ is in $\spaziW_{\tau_\enne}^1$. Therefore 
$B_{\tau_{\enne,\emme}}$ is the only  nontrivial  
element in $\diagonali^1_{\tau_{\enne,\emme}}$.

Since $K_\base$--invariant elements
 act as constants on $\spaziorpU_{\sigma_s}$,
we may  define  coefficients $\auttau{s,{\tau_{\enne,\emme}}}$ so that
$$
B_{\tau_{\enne,\emme}}=\sum_{s=0}^{2(\emme\wedge\enne)}\auttau{s,\tau_{\enne,\emme}}P_{s,\tau_{\enne,\emme}}\ .
$$

\begin{lemma}\label{autovSO4}
 $\auttau{s,\tau_{\enne,\emme}}=4(\emme-s)(\enne-s)-2s(s+1)$.
\end{lemma}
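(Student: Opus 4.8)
The quantity to compute is the scalar by which $B_{\tau_{\enne,\emme}} = -\sum_{j=1}^3 d\tau_\enne(U_j)\otimes d\tau_\emme(V_j)$ acts on the isotypic component $\spaziorpU_{\sigma_s}\sim\tau_{\emme+\enne-s}$ inside $\spaziorp_{\tau_\enne}\otimes\spaziorp_{\tau_\emme}$ restricted to $K_\base={\rm diag}(SU_2\times SU_2)$. The key observation is that $\sum_j U_j\otimes V_j$ is, up to normalization, the polarized Casimir coupling operator: on the diagonal $SU_2$, the generator $W_j = d\tau_\enne(U_j)\otimes I + I\otimes d\tau_\emme(V_j)$ has Casimir $\sum_j W_j^2$, while $\sum_j d\tau_\enne(U_j)^2$ and $\sum_j d\tau_\emme(V_j)^2$ are the individual Casimirs. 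Expanding,
\[
\sum_{j=1}^3 W_j^2 = \sum_j d\tau_\enne(U_j)^2\otimes I + I\otimes\sum_j d\tau_\emme(V_j)^2 + 2\sum_j d\tau_\enne(U_j)\otimes d\tau_\emme(V_j),
\]
so that $B_{\tau_{\enne,\emme}} = -\sum_j d\tau_\enne(U_j)\otimes d\tau_\emme(V_j) = \tfrac12\big(\Omega_\enne\otimes I + I\otimes\Omega_\emme - \Omega_{\rm diag}\big)$ where $\Omega$ denotes minus the appropriate Casimir (with the normalization fixed by the basis $U_j,V_j$).

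\textbf{Key steps.} First I would pin down the Casimir eigenvalues in the chosen normalization. With $\{U_1,U_2,U_3\}$ the stated basis of $\fs\fu_2$, one checks the structure constants (e.g. $[U_1,U_2]=2U_3$ and cyclically, since these are $2\times 2$ skew-Hermitian matrices of the Pauli type), so $-\sum_j d\tau_\emme(U_j)^2$ acts on $\spaziorp_{\tau_\emme}$ as a scalar; evaluating on a highest weight vector gives the eigenvalue $4\emme(\emme+1)$. This is consistent with the value $\xi'_{\tau_{\enne,\emme}} = \big(2\enne(2\enne+2),2\emme(2\emme+2)\big)$ recorded above for $D_{p_1},D_{p_2}$, since $2\enne(2\enne+2)=4\enne(\enne+1)$. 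Second, on the summand $\spaziorpU_{\sigma_s}\sim\tau_{\emme+\enne-s}$ the diagonal Casimir $-\sum_j W_j^2$ acts by $4(\emme+\enne-s)(\emme+\enne-s+1)$. Third, substitute into
\[
\auttau{s,\tau_{\enne,\emme}} = \tfrac12\Big(4\emme(\emme+1) + 4\enne(\enne+1) - 4(\emme+\enne-s)(\emme+\enne-s+1)\Big)
\]
and simplify. Writing $t = \emme+\enne-s$, the bracket is $2\big(\emme(\emme+1)+\enne(\enne+1)-t(t+1)\big)$; expanding $t(t+1) = (\emme+\enne-s)(\emme+\enne-s+1)$ and collecting terms should yield exactly $4(\emme-s)(\enne-s) - 2s(s+1)$. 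I would carry out this polynomial identity directly and, as a sanity check, verify it at $s=0$ (giving $4\emme\enne$, the coupling of two highest weights, which matches the top component) and at $s = 2(\emme\wedge\enne)$.

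\textbf{Main obstacle.} The only real subtlety is bookkeeping of normalizations: the basis $U_j$ is not the standard orthonormal basis for the Killing form, so the factor relating $\sum_j d\tau_\emme(U_j)^2$ to the abstract Casimir must be tracked carefully (this is why the eigenvalue is $4\emme(\emme+1)$ rather than $\emme(\emme+1)$), and the same factor must be used consistently for the diagonal embedding — which is legitimate precisely because the diagonal embedding $\fs\fu_2\hookrightarrow\fs\fu_2\times\fs\fu_2$, $W_j = (U_j,V_j)$, preserves the structure constants, so $[W_i,W_j]=2\epsilon_{ijk}W_k$ just as for the $U_j$. Once this is fixed, everything else is the elementary Clebsch–Gordan computation above. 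An alternative, purely computational route — using the explicit weight basis of $\spaziorp_{\tau_\enne}\otimes\spaziorp_{\tau_\emme}$ and computing the action of $\sum_j U_j\otimes V_j$ on the unique $K_\base$-highest-weight vector of each $\tau_{\emme+\enne-s}$-summand via raising/lowering operators — would also work but is messier; I would prefer the Casimir argument for brevity.
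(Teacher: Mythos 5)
Your Casimir argument is a genuinely different and, in principle, cleaner route than the paper's. The paper works in the explicit polynomial model $\spaziorp_{\tau_\nu}\otimes\spaziorp_{\tau_\mu}=\cP^{2\nu,2\mu}(\bC^2)$, writes $d\tau_\nu(U_j)$ and $d\tau_\mu(V_j)$ out as first--order differential operators in $z,\bar z$, identifies $\spaziorpU_{\sigma_s}=|z|^{2s}\cH^{2\nu-s,2\mu-s}$, and applies $B_{\tau_{\nu,\mu}}$ directly to the concrete polynomial $|z|^{2s}z_1^{2\nu-s}\bar z_2^{2\mu-s}$. Your plan would replace that coordinate computation by the polarization identity for the diagonal Casimir, and it correctly pins down the nonstandard normalization ($[U_i,U_j]=2\epsilon_{ijk}U_k$, hence Casimir eigenvalue $4\mu(\mu+1)$, consistent with $\xi'_{\tau_{\nu,\mu}}=(4\nu(\nu+1),4\mu(\mu+1))$). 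That part is sound.

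However, as written the derivation contains a sign error that makes the final formula come out as the negative of the claim. With your convention $\Omega=-\sum_jX_j^2$, the displayed polarization identity
\begin{equation*}
\sum_j W_j^2=\sum_j d\tau_\nu(U_j)^2\otimes I+I\otimes\sum_j d\tau_\mu(V_j)^2+2\sum_j d\tau_\nu(U_j)\otimes d\tau_\mu(V_j)
\end{equation*}
solves for $B_{\tau_{\nu,\mu}}=-\sum_jd\tau_\nu(U_j)\otimes d\tau_\mu(V_j)$ as
$B_{\tau_{\nu,\mu}}=\tfrac12\bigl(\Omega_{\rm diag}-\Omega_\nu\otimes I-I\otimes\Omega_\mu\bigr)$,
not $\tfrac12\bigl(\Omega_\nu+\Omega_\mu-\Omega_{\rm diag}\bigr)$ as you wrote. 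Accordingly the number you should substitute is
$\tfrac12\bigl(4(\mu+\nu-s)(\mu+\nu-s+1)-4\mu(\mu+1)-4\nu(\nu+1)\bigr)$, with the opposite sign of your displayed expression; expanding \emph{your} expression gives $-4(\mu-s)(\nu-s)+2s(s+1)$, and your own sanity check at $s=0$ would in fact flag this, since it yields $-4\mu\nu$ rather than the $+4\mu\nu$ you asserted. Once the sign is fixed, the elementary expansion does simplify to $4(\mu-s)(\nu-s)-2s(s+1)$ and the proof is complete, and arguably preferable to the paper's for readers comfortable with Clebsch--Gordan; the paper's choice has the merit of being self-contained and avoiding any discussion of normalization conventions.
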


\begin{proof}
In the polynomial model  $\spaziorp_{\tau_\enne}\otimes \spaziorp_{\tau_\emme}=\cP^{2\enne,2\emme}(\bC^2)$, we obtain
$$
\spaziorpU_{\sigma_s}=|z|^{2s}\,\,\cH^{2\enne-s,2\emme-s}\ 
$$
where $\cH^{\alpha,\beta}$ denotes the space of harmonic polynomials of bidegree $(\alpha,\beta)$.

Then $d\tau_\enne(U_j)$ and $d\tau_\emme(V_j)$ act respectively  on the holomorphic and the antiholomorphic variables.
Therefore we shall more briefly write
$Z_j=d\tau_\enne(U_j)$ and  $\bar Z_j=d\tau_\emme(V_j)$ so that
\beas
&Z_1=-iz_1\de_{z_1}+iz_2\de_{z_2}\ ,\qquad &Z_2=-z_2\de_{z_1}+z_1\de_{z_2}\ ,\qquad &Z_3=-iz_2\de_{z_1}-iz_1\de_{z_2}\\
&\bar Z_1=i\bar z_1\de_{\bar z_1}-i\bar z_2\de_{\bar z_2}\ ,\qquad &\bar Z_2=-\bar z_2\de_{\bar z_1}+\bar z_1\de_{\bar z_2}\ ,\qquad &\bar Z_3=i\bar z_2\de_{\bar z_1}+i\bar z_1\de_{\bar z_2}\ ,
\eeas
and
\beas
&Z_2+iZ_3=2z_1\de_{z_2}\ ,\qquad &Z_2-iZ_3=-2z_2\de_{z_1}\\
&\bar Z_2+i\bar Z_3=-2\bar z_2\de_{\bar z_1}\ ,\qquad &\bar Z_2-i\bar Z_3=2\bar z_1\de_{\bar z_2}\ .
\eeas

According to the above notation
\beas
B_{\tau_{\enne,\emme}}&=-\sum_{j=1}^3Z_j\bar Z_j=-Z_1\bar Z_1-\half(Z_2+iZ_3)(\bar Z_2-i\bar Z_3)-\half(Z_2-iZ_3)(\bar Z_2+i\bar Z_3)\\
&=-(z_1\de_{z_1}-z_2\de_{z_2})(\bar z_1\de_{\bar z_1}-\bar z_2\de_{\bar z_2})-2z_1\bar z_1\de_{z_2}\de_{\bar z_2}-2z_2\bar z_2\de_{z_1}\de_{\bar z_1}\ .
\eeas

Consider the polynomial in $\spaziorpU_{\sigma_s}$ given by 
$$
P(z,\bar z)=|z|^{2s}z_1^{2\enne-s}\bar z_2^{2\emme-s}=\sum_{k=0}^s\binom{s}{k}z_1^{2\enne-s+k}\bar z_1^k z_2^{s-k}\bar z_2^{2\emme-k}\ .
$$
Explicit computations show that
\[
B_{\tau_{\enne,\emme}}P=\big(4(\emme-s)(\enne-s)-2s(s+1)\big)P\ .\qedhere
\]
\end{proof}

\subsection{Polynomials in $\Btau$} 
In order to treat  the cases $n=3,4$  together as far as possible, it is convenient to use the following notation.
We recall that
\[
\finetau=\begin{cases}2\emme & n=3\quad \tau=\tau_{\emme\phantom{,\enne}}
\\ 2\,(\emme\wedge \enne) &n=4 \quad \tau=\tau_{\enne,\emme}\ 
\end{cases}
\]
and for $j=0,\ldots ,\finetau$ we put
\begin{equation*}\label{Pnn2}
\cW^j_\tau=
\begin{cases}
\cW^{j}_{\tau_{\emme}} &n=3\quad \tau=\tau_{\emme\phantom{,\enne}}
\\
\cW^{j,j}_{{\tau_{\enne,\emme}}} &n=4 \quad \tau=\tau_{\enne,\emme}\ .
\end{cases}
\end{equation*}

\begin{lemma}\label{diagonalWnj}
For every $\ell=0,\ldots ,a_\tau$, there is a monic polynomial $q_{\tau}^\ell$ of degree $\ell$ such that $\diagonali^\ell_{\tau}$ consists of the scalar multiples of  $q_{\tau}^\ell(\Btau)$.

Moreover, for a polynomial $p$ of degree at most $\finetau$, we have $p(\Btau)\in \sum_{j\le \ell}\spaziW_{\tau}^j$ if and only if $\deg(p) \le \ell$.\\
Finally, when $n=3$, $q_{\tau}^\ell$ is of the same parity as $\ell$.
\end{lemma}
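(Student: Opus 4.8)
The plan is to argue on the level of the irreducible decomposition $\mathrm{End}(\spaziorp_\tau)=\sum_{j=0}^{\finetau}\spaziW^j_\tau$ and to control the ``diagonal'' (i.e. $K_\base$-fixed) one-dimensional pieces $\diagonali^\ell_\tau\subset\spaziW^\ell_\tau$ using the key fact, already established in the two subsections above, that $\Btau$ spans $\diagonali^1_\tau$ and is a $K_\base$-fixed element, hence $\Btau^\ell$ is a $K_\base$-fixed element of $\bigoplus_{j}\spaziW^j_\tau$. First I would observe that $\Btau$, being $K_\base$-invariant, acts as a scalar $\aut{s}$ on each $\spaziorpU_{\sigma_s}$, $s=0,\ldots,\finetau$, and that \emph{these scalars are pairwise distinct}: for $n=3$ they are $-\emme+s$ by \eqref{Btauemme}, visibly distinct; for $n=4$ they are $4(\emme-s)(\enne-s)-2s(s+1)$ by Lemma~\ref{autovSO4}, and I would check (a short monotonicity/discriminant argument on $s\mapsto 4(\emme-s)(\enne-s)-2s(s+1)$ for $0\le s\le\finetau$) that these $\finetau+1$ values are distinct. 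Consequently $\Btau$, viewed as an element of the commutative algebra $\bigoplus_{s}\bC\,P_{s,\tau}\cong\bC^{\finetau+1}$ of $K_\base$-fixed matrices, is a \emph{cyclic vector}: $\{I,\Btau,\ldots,\Btau^{\finetau}\}$ is a basis of that algebra (Vandermonde in the $\aut{s}$).

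Next I would filter by the decomposition. Set $\spaziW^{\le\ell}_\tau=\sum_{j\le\ell}\spaziW^j_\tau$. The space of $K_\base$-fixed elements of $\spaziW^{\le\ell}_\tau$ is $\bigoplus_{j\le\ell}\diagonali^j_\tau$, which has dimension $\ell+1$ (each $\diagonali^j_\tau$ is one-dimensional, as recalled). Since $\Btau\in\diagonali^1_\tau\subset\spaziW^{\le1}_\tau$ and each $\spaziW^{\le\ell}_\tau$ is a subalgebra-stable... — more precisely, I claim $\Btau^j\in\spaziW^{\le j}_\tau$. This requires knowing that multiplication of matrices respects the filtration in the relevant sense: $\spaziW^{\le a}_\tau\cdot\spaziW^{\le b}_\tau\subseteq\spaziW^{\le a+b}_\tau$. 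That inclusion follows from Clebsch–Gordan: for $n=3$, $\spaziW^a_\tau\sim\tau_a$, $\spaziW^b_\tau\sim\tau_b$, and the $K$-equivariant product map $\mathrm{End}\otimes\mathrm{End}\to\mathrm{End}$ sends $\tau_a\otimes\tau_b$ into $\bigoplus_{|a-b|\le c\le a+b}\tau_c\subseteq\spaziW^{\le a+b}_\tau$; the $n=4$ case is the bigraded version of the same. Granting this, $\{I,\Btau,\ldots,\Btau^\ell\}$ are $\ell+1$ elements of the $(\ell+1)$-dimensional space $\bigoplus_{j\le\ell}\diagonali^j_\tau$, and they are linearly independent for $\ell\le\finetau$ by the cyclicity established above; hence they form a basis, and more to the point $\Btau^\ell$ has nonzero component in $\diagonali^\ell_\tau$. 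Therefore there is a unique monic polynomial $q^\ell_\tau$ of degree $\ell$ with $q^\ell_\tau(\Btau)\in\diagonali^\ell_\tau$ (subtract off the lower-degree components, which lie in $\bigoplus_{j<\ell}\diagonali^j_\tau=\mathrm{span}\{I,\Btau,\ldots,\Btau^{\ell-1}\}$ inductively), and $q^\ell_\tau(\Btau)$ spans $\diagonali^\ell_\tau$ since the latter is one-dimensional. The ``if and only if'' for a general $p$ with $\deg p\le\finetau$ is then immediate: writing $p=\sum_{j} c_j q^j_\tau$ in the basis $\{q^j_\tau\}$, one has $p(\Btau)=\sum_j c_j q^j_\tau(\Btau)$ with $q^j_\tau(\Btau)\in\diagonali^j_\tau\subseteq\spaziW^j_\tau$, and the components in distinct $\spaziW^j_\tau$ are independent, so $p(\Btau)\in\spaziW^{\le\ell}_\tau\iff c_j=0$ for $j>\ell\iff\deg p\le\ell$.

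For the final parity assertion when $n=3$: the eigenvalues of $\Btau$ are $\aut{s}=s-\emme$, which are symmetric about $0$ under $s\mapsto 2\emme-s$ (equivalently the spectrum of $\Btau$ is invariant under $t\mapsto-t$). This symmetry is the reflection of an outer symmetry: the element $w$ of $SO_3$ (or its $SU_2$ lift) that conjugates $d\tau_\emme(X_1)$ to $-d\tau_\emme(X_1)$ acts on $\diagonali^\ell_\tau$ by $\pm1$, and in fact by $(-1)^\ell$ because $\diagonali^\ell_\tau\subset\spaziW^\ell_\tau\sim\tau_\ell$ and the zero-weight vector of $\tau_\ell$ has Weyl-reflection sign $(-1)^\ell$. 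Since $\Btau\mapsto-\Btau$ under this symmetry, $q^\ell_\tau(-\Btau)$ also lands in $\diagonali^\ell_\tau$ with the same leading coefficient up to sign $(-1)^\ell$; comparing with the uniqueness of the monic generator forces $q^\ell_\tau(-t)=(-1)^\ell q^\ell_\tau(t)$, i.e. $q^\ell_\tau$ has the parity of $\ell$. I expect the main obstacle to be making the filtration property $\spaziW^{\le a}_\tau\cdot\spaziW^{\le b}_\tau\subseteq\spaziW^{\le a+b}_\tau$ and the distinctness of the eigenvalues $\aut{s}$ for $n=4$ genuinely airtight — the Clebsch–Gordan bookkeeping is routine but needs to be phrased correctly for the biregular ($\tau'\otimes\tau$) action, and the $n=4$ eigenvalue distinctness needs the small case-free computation on the quadratic $s\mapsto 4(\emme-s)(\enne-s)-2s(s+1)$ over the integer range $0\le s\le2(\emme\wedge\enne)$.
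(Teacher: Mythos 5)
Your proof is correct and reaches the lemma by essentially the same underlying mechanism as the paper's (Clebsch--Gordan to control the filtration by the $\spaziW^j_\tau$, one-dimensionality of each $\diagonali^j_\tau$, and a dimension count), but it is packaged differently in two places worth comparing. First, where the paper runs a step-by-step induction, at each stage invoking $\tau_{\ell-1}\otimes\tau_1\sim\tau_\ell\oplus\tau_{\ell-1}\oplus\tau_{\ell-2}$ and then simply asserting that $\Btau\,q^{\ell-1}_\tau(\Btau)$ has a \emph{nontrivial} component in $\spaziW^\ell_\tau$, you instead establish the whole filtration $\spaziW^{\le a}_\tau\cdot\spaziW^{\le b}_\tau\subseteq\spaziW^{\le a+b}_\tau$ and deduce nontriviality from the Vandermonde/cyclicity of $\Btau$. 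Your version makes explicit a point the paper leaves tacit: the nontriviality claim does need the $\finetau+1$ eigenvalues $\aut{s}$ to be pairwise distinct, which for $n=4$ requires the short computation on the quadratic $s\mapsto 4(\emme-s)(\enne-s)-2s(s+1)$ you sketch (the paper only states ``$\aut{s}\ne\aut{s'}$'' later as a matter of fact, without verification). Second, for the parity claim at $n=3$ the paper uses a shorter, different device: the $\spaziW^j_\tau$ are mutually orthogonal for the Hilbert--Schmidt inner product and $\langle\Btau^\ell,\Btau^{\ell'}\rangle=c\sum_{j=-\emme}^{\emme}j^{\ell+\ell'}=0$ when $\ell+\ell'$ is odd, so the Gram--Schmidt step producing $q^\ell_\tau$ never mixes parities. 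Your Weyl-element argument is also valid, and in fact you do not even need the sign $(-1)^\ell$ of the reflection on the zero-weight vector: once a representative $w$ with $\Btau\mapsto-\Btau$ preserves $\spaziW^\ell_\tau$, it follows that $q^\ell_\tau(-\Btau)\in\diagonali^\ell_\tau$, and comparing leading coefficients with the uniqueness of the monic generator already forces $q^\ell_\tau(-t)=(-1)^\ell q^\ell_\tau(t)$; the paper's orthogonality computation is just a quicker way to the same conclusion without the representation-theoretic input.
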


\begin{proof} We start giving the proof in the case where $n=3$, so $\tau=\tau_\mu$ and $\Btau=B_{\tau_\emme}$, given in~\eqref{Btauemme}.

We proceed by induction
proving all statements together. They are trivial for $\ell=0$ and have been proved in subsection~\ref{sub:3} for $\ell=1$.

Let
 $\ell\geq 2$ and consider the polynomial $t\mapsto p(t)=t\, q_\tau^{\ell-1}(t)$. Then $p$ 
  is of degree $\ell$ and $p(\Btau)$ is a $K_\base$--invariant element.
By the inductive hypothesis, $q_\tau^{\ell-1}(\Btau)$ is in $\spaziW_{\tau_\emme}^{\ell-1}$
and $\Btau$ is in $\spaziW_{\tau_\emme}^{1}$; moreover we have  $\tau_\emme'\otimes\tau_\emme|_{\spaziW_{\tau_\emme}^{\ell-1}}\sim \tau_{\ell-1}$ and the decomposition
\[
\tau_{\ell-1}\otimes \tau_1\sim \tau_{\ell}\oplus \tau_{\ell-1} \oplus \tau_{\ell-2}
\]
in irreducible summands with multiplicity one for  $\tau_{\ell}$.
Therefore $p(\Btau)$ is  in 
$\spaziW_{\tau_\emme}^\ell\oplus\cdots\oplus \spaziW_{\tau_\emme}^0$ with a nontrivial component in $\spaziW_{\tau_\emme}^\ell$.
We call $q_{\tau}^\ell(\Btau)$ this component. Hence $(p-q_{\tau}^\ell)(\Btau)$ is in $\sum_{j\in L} \spaziW_{\tau_\emme}^j$, which, by the inductive hypothesis, is a polynomial in $\Btau$ of degree at most $\ell-1$. Then $q_{\tau}^\ell$ has degree $\ell$ and its leading term is the same as $p$, so it is monic. 

As for the parity statement,
 note that the inner product is proportional to the  Hilbert--Schmidt one so that
\[
\langle (\Btau)^\ell,(\Btau)^{\ell'}\rangle=c\, \sum_{j=-\emme}^\emme j^{\ell+\ell'}=0
\]
when $\ell+\ell'$ is odd.

In the case where $n=4$, the proof is almost identical to the previous one; the only difference lies in  
the fact that in the decomposition
\[
\begin{aligned}
(\tau_{\ell-1}\boxtimes\tau_{\ell-1})\otimes (\tau_{1}\boxtimes\tau_{1})
&\sim (\tau_{\ell}\oplus \tau_{\ell-1} \oplus \tau_{\ell-2})\boxtimes( \tau_{\ell}\oplus \tau_{\ell-1} \oplus \tau_{\ell-2})
\\
&\sim \sum_{j=\ell-2}^{\ell}\sum_{k=\ell-2}^{\ell}\tau_{j}\boxtimes\tau_{k}
\end{aligned}
\]
in irreducible summands with multiplicity one for  $\tau_{\ell}\boxtimes\tau_{\ell}$, $K_\base$--invariant elements
arise only when $j=k$.
\end{proof}

\subsection{Equivariant homogeneous polynomials}
\quad
Denote by $Q_\tau$ the $K$--equivariant polynomial arising from the symmetrization of the polynomial $q_2$ and Lemma~\ref{lem:simmetrizzaz-vettoriale}. Explicity, 
\be\label{DtauQ}
\widehat{\DD_\tau F}=\widehat{F}\,Q_\tau \qquad \forall F\in \Smatr.
\ee

This polynomial is also completely determined by  the following properties:
\begin{itemize}
\item[i)]  $Q_\tau (\base)=\Btau$;
\item[ii)] $Q_\tau$ is $K$-equivariant;
\item[iii)] $Q_\tau$ is of least degree.
\end{itemize}
Note that $Q_\tau$ is homogeneous of degree $\undue$ where
$$
\undue=
\begin{cases}
1 \quad \text{ if } n=3
\\
2\quad  \text{ if } n=4
\end{cases}.
$$

Suppose now that $P:\bR^n\to {\rm End}(\spaziorp_\tau)$ is a $K$-equivariant polynomial. 
Then each homogeneous component of $P$ 
 is also equivariant and $P$ can be uniquely decomposed 
as the sum of $\spaziW_{\tau}^\ell$-valued homogeneous polynomials.

\begin{proposition}\label{p:gradopolinomi} Suppose that there exists  a nontrivial $K$--equivariant polynomial,
homogeneous of degree $d$ and $\spaziW_{\tau}^\ell$--valued. Then $d\geq \ell\undue$
and $d-\ell\undue$ is even.
\end{proposition}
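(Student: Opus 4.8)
The plan is to relate the degree $d$ of a $\spaziW_\tau^\ell$-valued equivariant polynomial to the degree of a $K$-central polynomial on $\fg$ and then apply Lemma~\ref{diagonalWnj}. First I would restrict attention to the fiber over the basepoint $\base$. If $P\colon\bR^n\to\spaziW_\tau^\ell$ is $K$-equivariant and homogeneous of degree $d$, then by the equivariance condition \eqref{equivariance} the whole polynomial $P$ is determined by the single value $P(\base)\in\spaziW_\tau^\ell$, and $P(\base)$ must be $K_\base$-invariant, hence (by definition of $\diagonali^\ell_\tau$) a scalar multiple of $q_\tau^\ell(\Btau)$. If $P$ is nontrivial this scalar is nonzero, so $P(\base)$ is a nonzero multiple of $q_\tau^\ell(\Btau)$.

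Next I would transport the question to differential operators. Recall that $\Btau=\widehat{\DD_\tau}(\base)$ and, more generally, by Lemma~\ref{lem:simmetrizzaz-vettoriale} the Fourier transform at $\base$ of $D^\tau_p$ (for $p\in\cP(\fg)^{{\rm Ad}(K)}$) is the value at $\base$ of the associated equivariant polynomial $Q_p$, which is built from $p$ by symmetrization in the $\fk$-variables and replacement of the $\bR^n$-variables by the Fourier covariables. The homogeneity degree in the $y$-variables of $Q_p$ equals the degree of $p$ in the $\bR^n$-variables. Since $\DD_\tau$ comes from $q_2$, which is homogeneous of degree $\undue$ in $y$ (degree $1$ when $n=3$, degree $2$ when $n=4$) and degree $1$ in the $\fk$-variables, the polynomial $Q_\tau$ is homogeneous of degree $\undue$; and since $\Btau$ generates $\diagonali^1_\tau$, powers $\Btau^j$ are obtained from the operators $\DD_\tau^j$, whose associated equivariant polynomials $Q_\tau^{\,j}$ (componentwise product) are homogeneous of degree $j\undue$. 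Therefore $q_\tau^\ell(\Btau)$, being a polynomial of degree exactly $\ell$ in $\Btau$, is the value at $\base$ of an equivariant polynomial which is a sum of homogeneous pieces of degrees $0,\undue,2\undue,\dots,\ell\undue$, the top piece being nonzero.

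Now I would argue about minimality and parity. Among all $K$-equivariant polynomials $R$ with $R(\base)$ a nonzero multiple of $q_\tau^\ell(\Btau)$, there is one of least degree; by the above it has degree at most $\ell\undue$, and in fact I claim any such $R$ must have degree at least $\ell$ worth of $\Btau$-content, forcing $d\ge\ell\undue$ for the homogeneous one $P$. More precisely: given our homogeneous $P$ of degree $d$ with $P(\base)=c\,q_\tau^\ell(\Btau)$, $c\neq0$, I would expand $P$ along the $K$-orbit. The $\spaziW_\tau^\ell$-valued homogeneous polynomials of degree $d$ form a $K$-module isomorphic to (a multiple of) the space of $K$-equivariant maps, and the relevant numerical constraint is that a $\spaziW_\tau^\ell\cong\tau_\ell$ (or $\tau_\ell\boxtimes\tau_\ell$) summand can occur in the decomposition of $\cP^d(\bR^n)\otimes\spaziW_\tau^\ell$ as a $K$-module only when $d\ge\ell\undue$ and $d\equiv\ell\undue\pmod 2$ — this is exactly the branching/highest-weight bookkeeping already used in the proof of Lemma~\ref{diagonalWnj}, where the decomposition of $\tau_{\ell-1}\otimes\tau_1$ (resp.\ $(\tau_{\ell-1}\boxtimes\tau_{\ell-1})\otimes(\tau_1\boxtimes\tau_1)$) controls which powers of $\Btau$ reach $\spaziW_\tau^\ell$. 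For $n=3$ the parity clause of Lemma~\ref{diagonalWnj} (that $q_\tau^\ell$ has the parity of $\ell$) already supplies the $d\equiv\ell\pmod 2$ half directly; for $n=4$ one tracks the bidegree and uses $\undue=2$, so $d-2\ell$ even is automatic from $d\ge 2\ell$ being the only constraint after checking that odd intermediate degrees carry no $K_\base$-fixed vectors, again as in Lemma~\ref{diagonalWnj}.

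I expect the main obstacle to be making the ``only when $d\ge\ell\undue$ and $d\equiv\ell\undue\pmod 2$'' claim precise and clean, i.e.\ identifying exactly which $SO_n$-representations occur in the spaces of homogeneous polynomials $\cP^d(\bR^n)$ and matching the highest weight of $\spaziW_\tau^\ell$ (weight $\ell$ on the torus of $K_\base$, doubled in the $n=4$ bookkeeping) against the weights available at degree $d$. The cleanest route is probably to avoid a full representation-theoretic decomposition and instead argue directly: lift $P$ to the operator side, observe $P$ agrees with (a component of) $Q_\tau^{\,j}$ for the relevant $j$ with $j\undue=d$, and invoke Lemma~\ref{diagonalWnj} to see that reaching $\spaziW_\tau^\ell$ requires $j\ge\ell$, hence $d=j\undue\ge\ell\undue$, with the parity then read off from the homogeneity of $Q_\tau^{\,j}$ (all degrees $\equiv j\undue\equiv\ell\undue$ modulo $2$ once $\undue$ is factored in). This reduces everything already proved and keeps the argument within the toolbox of the excerpt.
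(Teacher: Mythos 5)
Your opening step coincides with the paper's: from equivariance and the $\spaziW_\tau^\ell$-valued assumption you get $P(\base)\in\diagonali_\tau^\ell$, hence $P(\base)=c\,q_\tau^\ell(\Btau)$ with $c\neq0$. After that you diverge, and there are two distinct problems with how you try to finish.

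First, the paper's proof never touches differential operators or the decomposition of $\cP^d(\bR^n)\otimes\spaziW_\tau^\ell$. It instead plays the homogeneity of $P$ against the equivariance directly in polar coordinates: for $y\neq0$ one has
\[
P(y)=|y|^d\,\tau(k_{y/|y|})\,P(\base)\,\tau(k_{y/|y|})^{-1}=c\,|y|^d\,q_\tau^\ell\big(Q_\tau(y/|y|)\big),
\]
and since $Q_\tau$ is homogeneous of degree $\undue$ and $q_\tau^\ell$ is monic of degree exactly $\ell$, expanding $q_\tau^\ell$ turns this into a sum whose top term is $c\,|y|^{d-\ell\undue}\,Q_\tau(y)^\ell$. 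Demanding that the whole expression be a polynomial forces $d-\ell\undue\geq0$ and $d-\ell\undue$ even. That single identity is the heart of the argument, and it is entirely absent from your write-up even though you observe earlier that $P$ is determined by $P(\base)$ and equivariance.

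Second, the fallback you call the ``cleanest route'' --- lift $P$ and observe that it agrees with a component of $Q_\tau^{\,j}$ for the $j$ with $j\undue=d$ --- is not correct as stated. A homogeneous equivariant $\spaziW_\tau^\ell$-valued polynomial of degree $d$ need not be of that form: any product of a lower-degree such polynomial with a power of $|y|^2$ is again one, and these are not components of pure $Q_\tau$-powers. (The trivial case $\ell=0$ already shows this: $|y|^{2m}I$ is not $Q_\tau^{\,0}=I$ for $m>0$.) Your first alternative --- decomposing $\cP^d(\bR^n)$ into $K$-types via spherical harmonics and matching highest weights against $\spaziW_\tau^\ell\cong\tau_\ell$ (resp.\ $\tau_\ell\boxtimes\tau_\ell$) --- \emph{would} give both the bound $d\geq\ell\undue$ and the parity, so that route is viable, but you explicitly flag it as ``the main obstacle'' and leave it as a sketch. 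As written, then, the proposal has a genuine gap: neither branch is carried through, and the one you present as cleanest rests on a false reduction. The fix is to write the polar-coordinate formula above, which collapses the whole question to the divisibility of $|y|^{d-\ell\undue}$ and makes both conclusions immediate.
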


\begin{proof} Suppose that $P:\bR^n\to {\rm End}(\spaziorp_\tau)$ is a nontrivial $K$--equivariant polynomial
homogeneous of degree $d$ and $\spaziW_{\tau}^\ell$-valued.
Since  $P(\base)$ is in $\diagonali_\tau^\ell$, by Lemma~\ref{diagonalWnj}
\[
P(\base)= c\,\, q_\tau^\ell(\Btau)
\]
for some constant $c$. Setting $y'=y/|y|$ for $y\ne 0$, 
$$
P(y)=c\, |y|^{d}\tau(k_{y'})q_\tau^\ell(\Btau)\tau(k_{y'}\inv)=c\, |y|^{d} q_\tau^\ell(Q_{\tau}(y/|y|)).
$$
Since in the last equation
there are no negative powers, we deduce that $d\geq \ell\undue$.
Finally, since $P$ is a polynomial,
$d-\ell\undue$ is even. 
\end{proof}

\bigskip

\subsection{${\rm End}(\spaziorp_\tau)$-valued spherical functions as derivatives of scalar spherical functions}
\quad
\medskip 
  
For $\ro\ge0$ we denote by $\varphi_\ro$ the  spherical function of $(G,K)$ as  an ordinary Gelfand pair with eigenvalue $\ro^2$ relative to $\Delta$.
 In formulae,
$$
\varphi_\ro(y)=\varphi_1(\ro y)\quad\text{ where }\quad \varphi_1(y)=\int_{S^{n-1}}e^{i\, y\cdot \eta}\, d\sigma (\eta)\ \ \  \Big(=|y|^{-\frac{n}{2}+1}\,J_{\frac{n}{2}-1}\big(|y|\big)\Big)
$$
and $\sigma$ is the normalized surface measure of the unit sphere $S^{n-1}$ in $\bR^n$.

Coherently with Lemma~\ref{autovSO4}, we call $\aut{s}$ the eigenvalue of $\Btau$ on the subspace $\spaziorpU_{\sigma_s}$.
In formulae,
\[
\aut{s}=
\begin{cases}
-\emme+s & n=3,\quad \ \tau=\tau_{\emme\phantom{,\enne}}
\\
4(\emme-s)(\enne-s)-2s(s+1) & n=4,\quad \ \tau=\tau_{\enne,\emme}
\end{cases}
\qquad s=0,\ldots,\finetau,
\]
so that $B_\tau=\sum_{s=0}^{\finetau} \aut{s}P_{s,\tau}$. Note that, with respect to the notation in~\eqref{diagonal} $\aut{s}$ is a shorthand notation
\begin{equation}\label{e:shorthand}
\aut{s}=b_{s,\tau}(Q_\tau,1).
\end{equation}
Since $\aut{s}\neq \aut{s'}$ when $s\neq s'$, 
the orthogonal projection $P_{s,\tau}$ of $\spaziorp_\tau$ onto $\spaziorpU_{\sigma_s}$ can be written as a polynomial
in $\Btau$ of degree $\finetau$. Indeed,
\[
P_{s,\tau}=p_{s,\tau}(\Btau)\qquad \text{ where }\qquad p_{s,\tau}(t)=\prod_{j\neq s,\, j=0}^{\finetau} \frac{t-\aut{j}}{\aut{s}-\aut{j}}\ .
\]

In the following proposition we write the ${\rm End}(\spaziorp_\tau)$-valued spherical functions as derivatives of scalar spherical functions and we observe that 
the distributional Fourier transform of   $\Phi^\tau_{\ro,s}$  is given in terms of the orthogonal projection $P_{s,\tau}$. As a consequence we easily compute   the embedded spectrum $\Sigma^\tau_\cD$ showing that it consists of 
half parabolas when $n=3$ and rays through the origin when $n=4$.
\smallskip

Given an $\textup{End}(\spaziorp_\tau)$-valued polynomial $P$, we denote by   $P(\partial)$  the differential operator defined by the rule
\[
\widehat{P(\partial)F}(\eta)=\widehat F(\eta)\,P(\eta)\qquad \eta\in \bR^n, \quad \forall F\in \Smatr.
\]

In particular note that
$$
\DD_\tau=Q_\tau(\partial).
$$

 We recall that $\undue=1$ when $n=3$,
and $\undue=2$ when $n=4$. 
\begin{proposition}\label{spettro}
The spectrum $\Sigma^\tau_\cD$ is given by
\[
\Sigma^\tau_\cD=\big\{\big(\ro^2,\aut{s}\,\ro^\undue\big)\,:\,\ro\ge0\,,\ s=0,\dots,\finetau \big\}.
\]
Moreover, the ${\rm End}(\spaziorp_\tau)$-valued spherical function $\Phi^\tau_{\ro,s}$ in~\eqref{Phi} corresponds to the pair of eigenvalues $\big(\ro^2,\aut{s}\,\ro^\undue\big)$ and satisfies
\begin{equation}\label{Phixi}
\Phi^\tau_{\ro,s}=(2\pi)^{n}\,\frac{d_\tau}{d_{\sigma_s}}\,
(p_{s,\tau}\circ Q_{\tau})(\partial)\, \varphi_\ro
\qquad \forall \ro>0,   
\end{equation}
\end{proposition}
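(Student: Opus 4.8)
The plan is to establish the three assertions in turn: the spectrum formula, the identification of $\Phi^\tau_{\ro,s}$ with the stated pair of eigenvalues, and the derivative formula \eqref{Phixi}. I would start from the spherical function $\Phi^\tau_{\ro,s}$ given in \eqref{Phi} and compute its eigenvalues under the two generators $D^\tau_{q_1}=\Delta I$ and $\DD_\tau=Q_\tau(\partial)$. Since $\Phi^\tau_{\ro,s}(y)$ is built by averaging $e^{i\ro\base\cdot(ky)}$, it is a (matrix-valued) superposition of exponentials with frequencies on the sphere of radius $\ro$; hence $\Delta\Phi^\tau_{\ro,s}=\ro^2\Phi^\tau_{\ro,s}$ immediately, giving the first coordinate $\ro^2$. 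For the second coordinate, observe that $\widehat{\Phi^\tau_{\ro,s}}$ is a distribution supported on the sphere $|\eta|=\ro$; by Lemma~\ref{lem:simmetrizzaz-vettoriale} the action of $\DD_\tau$ on the Fourier side is multiplication by $Q_\tau(\eta)$, and by the homogeneity of $Q_\tau$ (degree $\undue$, established above) together with its $K$-equivariance we have $Q_\tau(\eta)=|\eta|^\undue\,\tau(k_{\eta'})\,Q_\tau(\base)\,\tau(k_{\eta'})\inv=|\eta|^\undue\,\tau(k_{\eta'})\,\Btau\,\tau(k_{\eta'})\inv$ for $\eta\neq0$. The key point is then that, on the support $|\eta|=\ro$, the relevant component of $\widehat{\Phi^\tau_{\ro,s}}$ transforms under $K$ so as to diagonalize $\Btau$ on the block $\spaziorpU_{\sigma_s}$, where it acts by the scalar $\aut{s}$; thus $\DD_\tau\Phi^\tau_{\ro,s}=\aut{s}\,\ro^\undue\,\Phi^\tau_{\ro,s}$, giving the second coordinate and the eigenvalue pair.

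\textbf{The derivative formula.} For \eqref{Phixi}, I would compute $\widehat{\varphi_\ro}$ explicitly: $\varphi_\ro$ is the spherical average of $e^{i\ro\,y\cdot\eta}$, so its distributional Fourier transform is a constant multiple of the normalized surface measure on the sphere $|\eta|=\ro$, with the constant $(2\pi)^n$ dictated by our normalization of $\cF$ and of $\sigma$. Applying the operator $(p_{s,\tau}\circ Q_\tau)(\partial)$ multiplies $\widehat{\varphi_\ro}$ by $(p_{s,\tau}\circ Q_\tau)(\eta)$. On the sphere $|\eta|=\ro$ we have $Q_\tau(\eta)=\ro^\undue\,\tau(k_{\eta'})\,\Btau\,\tau(k_{\eta'})\inv$, hence $p_{s,\tau}(Q_\tau(\eta))=\tau(k_{\eta'})\,p_{s,\tau}(\ro^\undue\Btau)\,\tau(k_{\eta'})\inv$. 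Here I would need the polynomial $p_{s,\tau}$ to be rescaled correctly: since $p_{s,\tau}$ is the Lagrange interpolation polynomial with $p_{s,\tau}(\aut{j})=\delta_{sj}$, one must track whether the factor $\ro^\undue$ inside $Q_\tau$ forces a corresponding dilation of the interpolation nodes. The cleanest route is to note that on the sphere of radius $\ro$, $Q_\tau(\eta)$ has spectrum $\{\aut{j}\ro^\undue\}$, and $p_{s,\tau}$ should be understood as the polynomial whose value at $\aut{j}\ro^\undue$ is $\delta_{sj}$ — but because $\Phi^\tau_{\ro,s}$ is already normalized via the $\ro$-independent $\Btau$, one uses instead that $p_{s,\tau}(Q_\tau(\base))=P_{s,\tau}$ and the scaling $\varphi_\ro(y)=\varphi_1(\ro y)$ to reduce to the case $\ro=1$. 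Carrying this out, the integral $\int_K e^{i\ro\base\cdot(ky)}\,\tau(k\inv)P_{s,\tau}\tau(k)\,dk$ reappears, matching \eqref{Phi} up to the stated constant $(2\pi)^n d_\tau/d_{\sigma_s}$.

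\textbf{The spectrum.} Once $\Phi^\tau_{\ro,s}$ is shown to be $\tau$-spherical with eigenvalue pair $(\ro^2,\aut{s}\ro^\undue)$, the inclusion $\Sigma^\tau_\cD\supseteq\{(\ro^2,\aut{s}\ro^\undue):\ro\ge0,\ s=0,\dots,\finetau\}$ is immediate. For the reverse inclusion I would invoke Proposition~\ref{sferichematr}, which says that \emph{every} element of $\Sigma^\tau$ is of the form $\Phi^\tau_{\ro,s}$ for some $\ro\ge0$ and some $s\in\{0,\dots,\finetau\}$; applying the embedding $\iota''_\cD$ then yields exactly the claimed set. I should also check the boundary case $\ro=0$ separately: there $\Phi^\tau_{0,s}$ is a constant matrix (the value at the basepoint being the identity by the normalization noted after Proposition~\ref{sferichematr}), consistent with the eigenvalue pair $(0,0)$.

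\textbf{Main obstacle.} The routine parts are the Fourier computations and the homogeneity/equivariance bookkeeping. The delicate point I anticipate is getting the constant in \eqref{Phixi} exactly right and correctly handling the interplay between the $\ro$-dependent scaling of $Q_\tau$ on the sphere $|\eta|=\ro$ and the fixed interpolation nodes $\aut{j}$ defining $p_{s,\tau}$ — i.e.\ making sure that $(p_{s,\tau}\circ Q_\tau)(\partial)$ applied to $\varphi_\ro$ produces $P_{s,\tau}$ in the integrand and not $p_{s,\tau}(\ro^\undue\Btau)$, which would be the wrong (not idempotent) operator. Reducing to $\ro=1$ via the dilation identity $\varphi_\ro(\cdot)=\varphi_1(\ro\,\cdot)$ and the corresponding scaling of the derivative operator is the way I would sidestep this cleanly.
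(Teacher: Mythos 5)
Your plan follows the paper's own proof: rewrite $\tau(k\inv)P_{s,\tau}\tau(k)$ as $(p_{s,\tau}\circ Q_\tau)(k\inv\base)$ using the equivariance of $Q_\tau$ together with $P_{s,\tau}=p_{s,\tau}(\Btau)$, pass from the $K$-integral to an integral over $S^{n-1}$, and read off the two eigenvalues on the Fourier side, scaling from $\ro=1$ to general $\ro$. The $\ro$-scaling caveat you raise about \eqref{Phixi} is well spotted --- since $p_{s,\tau}\circ Q_\tau$ is not homogeneous, the multiplier $(p_{s,\tau}\circ Q_\tau)(\partial)$ acting on $\varphi_\ro$ yields $p_{s,\tau}(\ro^\undue\Btau)$ rather than the projection $P_{s,\tau}$ for $\ro\neq 1$ --- but the paper in fact only invokes \eqref{Phixi} at $\ro=1$ (computing $\widehat\Phi^\tau_{1,s}(\base)=C_{s,\tau}P_{s,\tau}$) and then extends by homogeneity and equivariance, which is precisely the dilation reduction you propose.
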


\begin{proof} The case $\ro=0$ is trivial. When $\ro>0$, by  formula~\eqref{Phi} we have

\als{
\Phi^\tau_{\ro,s}(y)&=\frac{d_\tau}{d_{\sigma_s}} \int_{\grpSO_n} e^{i\ro\,\base\cdot(k y)}\tau (k\inv)P_{s,\tau}\,\tau (k)\,dk\ 
\\
&=\frac{d_\tau}{d_{\sigma_s}} \int_{\grpSO_n} e^{i\ro\, k\inv \base\,\cdot y}\,(p_{s,\tau}\circ Q_{\tau})(k\inv\base)\,dk
\\
&=\frac{d_\tau}{d_{\sigma_s}} \int_{S^{n-1}} e^{i\ro\,\eta\,\cdot y}\,(p_{s,\tau}\circ Q_{\tau})(\eta)\, d\sigma(\eta)
\\
&=
(2\pi)^{n}\,\frac{d_\tau}{d_{\sigma_s}}\,
(p_{s,\tau}\circ Q_{\tau})(\partial)\, \varphi_\ro\ (y),
} 
We now determine the embedded spectrum.
From formula~\eqref{Phi}, 
it is easy to check that $\Delta \Phi^\tau_{\ro,s}=\ro^2\, \Phi^\tau_{\ro,s}$.
As for the second eigenvalue by \eqref{Phixi}
\begin{equation*} 
\widehat \Phi^\tau_{1,s}(\base)=C_{s,\tau}\, P_{s,\tau}
=C_{s,\tau}\, \, p_{s,\tau}(\Btau). 
\end{equation*}
where $C_{s,\tau}=\frac{d_\tau}{d_{\sigma_s}}\,\frac{(2\pi)^n}{|S^{n-1}|}$. Hence by 
\eqref{diagonal} and
 \eqref{e:shorthand} 
we have
$$
\widehat{\DD_\tau\Phi^\tau_{1,s}}(\base)=\widehat\Phi^\tau_{1,s}(\base)\, Q_{\tau}(\base)
= C_{s,\tau}
\,
 P_{s,\tau}\, B_{\tau}
=\aut{s}\,\widehat\Phi^\tau_{1,s}(\base).
$$
By  homogeneity and equivariance we have
\[
\DD_\tau\Phi^\tau_{\ro,s}=\ro^\undue \aut{s}\,\Phi^\tau_{\ro,s}\qquad \forall \ro>0, \quad s=0,\ldots \finetau .
\qedhere
\]
\end{proof}
\bigskip

Since we regard the $\tau$-spherical transform as defined on the embedded spectrum,  and according to Lemma~\ref{Phi}, we shall write 
\begin{equation}\label{cGtau}
\cG_\tau^\sharp F(\ro^2,\aut{s}\ro^\undue)
=\frac1{d_\tau}\,\int_{\bR^n}\tr\big(F(y)\Phi^\tau_{\ro ,s}(-y)\big)\,dy  = b_{s,\tau}\big(\widehat F,\ro\big)\ .
\end{equation}
}

The next corollary shows that we can  disregard part of the spectrum when dealing with even or odd functions in the case where $n=3$.

\begin{corollary}\label{Phi3}
In the case where $n=3$,
\[
\Phi^{\tau}_{\ro,\finetau-s}(y)=\Phi^{\tau}_{\ro,s}(-y),
\]
so that if $F$ is decomposed into its even part $F_+$ and its odd part $F_-$, then
\[
\cG_\tau^\sharp F_\pm(\ro^2, \aut{s}\ro)
=\pm \cG_\tau^\sharp F_\pm(\ro^2,- \aut{s}\ro)
\]
\end{corollary}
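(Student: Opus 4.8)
The plan is to derive the corollary from a single pointwise identity for the ${\rm End}(\spaziorp_\tau)$-valued spherical functions, namely $\Phi^\tau_{\ro,\finetau-s}(y)=\Phi^\tau_{\ro,s}(-y)$, and then to obtain the statement about $F_\pm$ by a change of variables in the integral defining $\cG_\tau^\sharp$.

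For the spherical function identity I would fix an element $k_0\in SO_3$ with $k_0\base=-\base$; such a $k_0$ exists — any rotation by $\pi$ about an axis orthogonal to $\base$, for instance $\exp(\pi X_2)$. The crucial observation is that $\tau(k_0)$ turns $\Btau$ into $-\Btau$. Indeed, by the $K$-equivariance of $Q_\tau$ and its homogeneity of degree $\undue=1$ (here $\undue=1$ since $n=3$),
\[
\tau(k_0\inv)\,\Btau\,\tau(k_0)=\tau(k_0\inv)\,Q_\tau(\base)\,\tau(k_0)=Q_\tau(k_0\inv\base)=Q_\tau(-\base)=-\Btau .
\]
Since the eigenvalues $\aut s=-\emme+s$ of $\Btau$ on $\spaziorpU_{\sigma_s}$ are pairwise distinct and satisfy $-\aut s=\emme-s=\aut{\finetau-s}$ (recall $\finetau=2\emme$), the relation above forces $\tau(k_0\inv)\,P_{s,\tau}\,\tau(k_0)=P_{\finetau-s,\tau}$. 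Substituting this into the integral formula \eqref{Phi} for $\Phi^\tau_{\ro,s}(-y)$, rewriting $-\ro\,\base\cdot(ky)=\ro\,(k_0\base)\cdot(ky)=\ro\,\base\cdot\big((k_0\inv k)y\big)$, replacing the integration variable $k$ by $k_0k$, and using invariance of Haar measure together with $d_{\sigma_s}=d_{\sigma_{\finetau-s}}$ (all weight spaces being one-dimensional here), the integral becomes exactly the one defining $\Phi^\tau_{\ro,\finetau-s}(y)$.

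For the second assertion, Proposition~\ref{spettro} identifies $\Phi^\tau_{\ro,\finetau-s}$ with the eigenvalue pair $(\ro^2,\aut{\finetau-s}\,\ro)=(\ro^2,-\aut s\,\ro)$, so by \eqref{cGtau}
\[
\cG_\tau^\sharp F_\pm(\ro^2,-\aut s\,\ro)=\frac1{d_\tau}\int_{\bR^n}\tr\big(F_\pm(y)\,\Phi^\tau_{\ro,\finetau-s}(-y)\big)\,dy .
\]
Inserting the identity just proved in the form $\Phi^\tau_{\ro,\finetau-s}(-y)=\Phi^\tau_{\ro,s}(y)$ and then substituting $y\mapsto-y$ (Lebesgue measure is invariant and $F_\pm(-y)=\pm F_\pm(y)$) turns the right-hand side into $\pm\,d_\tau^{-1}\int_{\bR^n}\tr\big(F_\pm(y)\,\Phi^\tau_{\ro,s}(-y)\big)\,dy=\pm\,\cG_\tau^\sharp F_\pm(\ro^2,\aut s\,\ro)$, which is the claim.

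The argument is essentially routine. The only point requiring some care is the bookkeeping around $k_0$: checking that $\tau(k_0)$ really inverts $\Btau$, that this produces precisely the relabeling $s\leftrightarrow\finetau-s$ of the projections $P_{s,\tau}$, and that $\tau(k_0)$ ends up on the correct side after the Haar substitution in \eqref{Phi}. I do not expect any genuine obstacle. (An equivalent alternative would start from the formula $\Phi^\tau_{\ro,s}=(2\pi)^n d_\tau d_{\sigma_s}^{-1}\,(p_{s,\tau}\circ Q_\tau)(\partial)\varphi_\ro$ of Proposition~\ref{spettro}, exploiting that $\varphi_\ro$ is even, that $Q_\tau$ is odd when $n=3$, and the polynomial identity $p_{s,\tau}(-t)=p_{\finetau-s,\tau}(t)$.)
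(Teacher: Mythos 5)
Your proof is correct, and for the first identity it takes a genuinely different route from the paper's. You work at the level of the defining $K$-integral \eqref{Phi}: you pick $k_0\in SO_3$ with $k_0\base=-\base$, show via the equivariance and homogeneity of $Q_\tau$ that $\tau(k_0\inv)B_\tau\tau(k_0)=-B_\tau$, deduce from the distinctness of eigenvalues and $\aut{\finetau-s}=-\aut s$ that conjugation by $\tau(k_0)$ sends $P_{s,\tau}$ to $P_{\finetau-s,\tau}$, and then push $k_0$ through by a Haar substitution. The paper instead works from the sphere-integral representation $\Phi^\tau_{\ro,s}(y)=d_\tau\int_{S^2}e^{i\ro\,y\cdot\eta}\,(p_{s,\tau}\circ Q_\tau)(\eta)\,d\sigma(\eta)$ of Proposition~\ref{spettro}, using the polynomial identity $p_{\finetau-s,\tau}(t)=p_{s,\tau}(-t)$, the oddness of $Q_\tau$, and the change of variable $\eta\mapsto-\eta$. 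These are essentially dual arguments: your relation $\tau(k_0\inv)P_{s,\tau}\tau(k_0)=P_{\finetau-s,\tau}$ is the representation-theoretic counterpart of $p_{\finetau-s,\tau}=p_{s,\tau}(-\,\cdot\,)$. Your version makes the underlying $SO_3$-symmetry implementing $y\mapsto -y$ explicit (useful if one wants to understand why no analogous parity trick is available for $n=4$, where $-\base$ is not a single $K_\base$-coset away from $\base$ in the same way and $\undue=2$); the paper's version is shorter because the sphere-integral formula is already in hand. The second half of your argument (identify eigenvalue pairs via Proposition~\ref{spettro}, then substitute $y\mapsto -y$ in \eqref{cGtau}) matches the paper's. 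You also note the paper's alternative in your closing parenthesis, so you clearly see the equivalence. No gaps.
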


\begin{proof}
 Note that when $n=3$
 \[
p_{\finetau-s}(t)=p_s(-t)
\]
and since $Q_{\tau}$ is homogeneous of degree $\undue=1$,
\[
\big(p_{\finetau-s}\circ Q_{\tau}\big)(y)=\big(p_s\circ Q_{\tau}\big)(-y)\ .
\]
Moreover $d_{\sigma_s}=1$, so that
\als{
\Phi^\tau_{\ro,\finetau-s}(y)&
=d_\tau\, \int_{S^2}e^{i\ro\, y\cdot\eta }\,p_{\finetau-s}\circ Q_{\tau}(\eta)\,d\sigma(\eta)
\\
&=d_\tau\, \int_{S^2}e^{i\ro\, y\cdot\eta}\,p_{s}\circ Q_{\tau}(-\eta)\,d\sigma(\eta)
\\
&=d_\tau\, \int_{S^2}e^{-i\ro\, y\cdot\eta}\,p_{s}\circ Q_{\tau}(\eta)\,d\sigma(\eta)
\\
&=\Phi^\tau_{\ro,s}(-y).
} 
Finally, by a simple change of variables
\als{
\cG_\tau^\sharp F_\pm(\ro^2, \aut{s}\ro)
&=\pm \frac1{d_\tau}\,\int_{\bR^n}\tr\big(F_\pm(y)\Phi^{\tau}_{\ro,\finetau-s}(-y)\big)\,dy,
}
and the desired formula follows remembering that, when $\tau=\tau_\emme$,  $\finetau=2\emme$ and $\aut{s}=-\emme+s$, so 
$\aut{\finetau-s}=-\aut{s}$.
\end{proof}

\bigskip
\section{Schwartz correspondence for $(G,K,\tau)$}
\medskip

In this section   we give our first argument to prove that  $ \cG_\tau^\sharp $ maps $\Smatr $ into $\cS(\Sigma^\tau_\cD)$.  The implied norm inequalities will be left implicit because their dependence on $\tau$ is not sufficiently sharp for our purposes. Sharper estimates
 will be obtained in Section~\ref{finale}.

The  analysis of the Banach algebra $L^1(\bR^n,{\rm End}(\spaziorp_\tau))^K$ developed in the previous section
leads to the decomposition of ${\rm End}(\spaziorp_\tau)$--valued equivariant functions into a sum of derivatives of scalar valued functions.
\begin{proposition}\label{sommaF2}
Let $F$ be 
in $\cS(\bR^n,{\rm End}(\spaziorp_\tau))^K$.
Then $F$ can be expressed in a unique way as
\[
 F=\sum_{i=0}^{\finetau}\DD_\tau^ig_i\ ,
\]
with scalar valued functions $g_i\in \cS(\bR^n)^K$.
\end{proposition}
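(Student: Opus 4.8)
The plan is to transfer the statement to the Fourier side and then to restrict to the ray $\{\ro\base:\ro\ge0\}$, along which both $F$ and the powers of $\DD_\tau$ are governed by the single endomorphism $\Btau$. First I would recall that the componentwise Fourier transform $\cF$ is a topological automorphism of $\cS(\bR^n,{\rm End}(\spaziorp_\tau))^K$ and of $\cS(\bR^n)^K$, and that $\widehat{\DD_\tau^ig}=\widehat g\,Q_\tau^i$ for scalar $g$ by \eqref{DtauQ} and $\DD_\tau=Q_\tau(\partial)$; hence the statement is equivalent to the existence of a unique family of radial scalar Schwartz $g_i$ with $\widehat F=\sum_{i=0}^{\finetau}\widehat{g_i}\,Q_\tau^i$. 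Both sides of this identity are $K$-equivariant ${\rm End}(\spaziorp_\tau)$-valued functions (the $\widehat{g_i}$ being $K$-invariant, $Q_\tau$ being $K$-equivariant, and so being its powers), so, as $SO_n\cdot\{\ro\base:\ro\ge0\}=\bR^n$, it suffices to produce the $g_i$ so that, using $Q_\tau(\ro\base)=\ro^\undue\Btau$,
\[
\widehat F(\ro\base)=\sum_{i=0}^{\finetau}\widehat{g_i}(\ro)\,\ro^{i\undue}\,\Btau^i\qquad(\ro\ge0),
\]
where $\widehat{g_i}(\ro)$ is the value of $\widehat{g_i}$ on the sphere of radius $\ro$.

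Next I would analyse $\widehat F$ along the ray. By $K$-equivariance and \eqref{diagonal}, $\widehat F(\ro\base)$ lies in the $K_\base$-fixed subspace $\bigoplus_{\ell=0}^{\finetau}\diagonali^\ell_\tau$ of ${\rm End}(\spaziorp_\tau)$, which by Lemma~\ref{diagonalWnj} has basis $\{q_\tau^\ell(\Btau):\ell=0,\dots,\finetau\}$; since $\Btau$ has the $\finetau+1$ distinct eigenvalues $\aut0,\dots,\aut{\finetau}$, the set $\{I,\Btau,\dots,\Btau^{\finetau}\}$ is another basis. Write $\widehat F(\ro\base)=\sum_{j=0}^{\finetau}\beta_j(\ro)\Btau^j$. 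Each $\beta_j$ is, up to a fixed linear functional, the restriction of the Schwartz function $\widehat F$ to the line $\bR\base$, so $\beta_j\in\cS(\bR)$. Comparing with the displayed identity, the proposition reduces to showing that $\psi_j(\ro):=\beta_j(\ro)\,\ro^{-j\undue}$ extends to an \emph{even} Schwartz function on $\bR$: one then takes $\widehat{g_j}$ to be the radial Schwartz function on $\bR^n$ with profile $\psi_j$ (even Schwartz profiles corresponding exactly to radial Schwartz functions) and sets $g_j:=\cF^{-1}\widehat{g_j}\in\cS(\bR^n)^K$.

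To prove the claim on $\psi_j$ I would combine two facts about $\beta_j$ at $\ro=0$. On one hand, the $\spaziW^\ell_\tau$-component $\Pi_\ell\widehat F$ of $\widehat F$ is a smooth $K$-equivariant $\spaziW^\ell_\tau$-valued function on $\bR^n$, so each of its homogeneous Taylor polynomials at $0$ is $K$-equivariant, homogeneous and $\spaziW^\ell_\tau$-valued, hence vanishes in degrees $<\ell\undue$ by Proposition~\ref{p:gradopolinomi}; therefore $\Pi_\ell\widehat F$, and in particular $\ro\mapsto\Pi_\ell\widehat F(\ro\base)=:\gamma_\ell(\ro)\,q_\tau^\ell(\Btau)$, vanishes at $0$ to order $\ge\ell\undue$, and since $q_\tau^\ell(t)=\sum_{j\le\ell}a_{\ell j}t^j$ with $a_{\ell\ell}=1$ gives $\beta_j=\sum_{\ell\ge j}a_{\ell j}\gamma_\ell$, also $\beta_j$ vanishes at $0$ to order $\ge j\undue$. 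On the other hand, picking $k_0\in SO_n$ with $k_0\base=-\base$ and using $\widehat F(-\ro\base)=\tilde\tau(k_0)\widehat F(\ro\base)$, the facts that $\tilde\tau(k_0)$ is an algebra automorphism of ${\rm End}(\spaziorp_\tau)$ commuting with each $\Pi_\ell$ and sending $\Btau=Q_\tau(\base)$ to $Q_\tau(-\base)=(-1)^\undue\Btau$, and, when $n=3$, the parity $q_\tau^\ell(-t)=(-1)^\ell q_\tau^\ell(t)$ from Lemma~\ref{diagonalWnj} (with $a_{\ell j}=0$ unless $\ell\equiv j\bmod2$), one obtains $\gamma_\ell(-\ro)=(-1)^{\ell\undue}\gamma_\ell(\ro)$, whence $\beta_j(-\ro)=(-1)^{j\undue}\beta_j(\ro)$. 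Together these make $\psi_j$ smooth across $0$, even, and rapidly decreasing, i.e.\ an even Schwartz function on $\bR$.

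Finally I would assemble the argument: with the $g_j$ thus defined the displayed identity holds on the ray by construction, hence $\widehat F=\sum_j\widehat{g_j}Q_\tau^j$ by the equivariance reduction, i.e.\ $F=\sum_j\DD_\tau^jg_j$ with $g_j\in\cS(\bR^n)^K$. For uniqueness, if $\sum_j\DD_\tau^jg_j=0$ with radial scalar Schwartz $g_j$, then $\sum_j\widehat{g_j}(\ro)\ro^{j\undue}\Btau^j=0$ for all $\ro\ge0$, and the linear independence of $\{I,\Btau,\dots,\Btau^{\finetau}\}$ forces $\widehat{g_j}(\ro)\ro^{j\undue}\equiv0$, hence $\widehat{g_j}\equiv0$ and $g_j=0$. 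I expect the main obstacle to be the third step: the normalisation $\beta_j\mapsto\beta_j(\ro)\ro^{-j\undue}$ returns a \emph{radial Schwartz} function on $\bR^n$ only because $\beta_j$ vanishes at $0$ to exactly the right order and with the right parity, and this rests precisely on the equivariant-polynomial degree estimate of Proposition~\ref{p:gradopolinomi} together with the reflection $k_0$ and the parity of the $q_\tau^\ell$; once this is secured, existence and uniqueness are elementary linear algebra along the ray.
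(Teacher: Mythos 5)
Your proof follows essentially the same route as the paper's: restrict to the ray $\{\ro\base\}$, identify the $K_\base$-fixed part of $\widehat F(\ro\base)$ as a polynomial in $\Btau$, and invoke Lemma~\ref{diagonalWnj} together with the degree bound of Proposition~\ref{p:gradopolinomi} to extract radial scalar Schwartz profiles. The only differences are bookkeeping ones: you work directly in the basis $\{\Btau^j\}$ rather than the paper's $\{q_\tau^\ell(\Btau)\}$ (the paper rearranges into powers of $\DD_\tau$ only at the end), you spell out the parity of the $\beta_j$ via the reflection $k_0$ where the paper appeals to Hadamard's division lemma after asserting the parity, and you give an explicit linear-independence argument for uniqueness -- all correct and, if anything, slightly more self-contained.
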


\begin{proof} Every  $F$ in $\Smatr$ can be decomposed  in a unique way
 as $F=\sum_{\ell=0}^{\finetau} F_\ell$, where the component $F_\ell$ is in  $\SmatrW$.
 Therefore we start with a single  $F_\ell$.

Since the Fourier transform commutes with rotations, $\widehat{F_\ell}$ is 
in $\SmatrW$ too, so that, by Lemma~\ref{diagonalWnj}, for every $\ro\neq 0$, $\widehat{F_\ell}(\ro\base)$ is a constant multiple, $f_\ell(\ro)$ say,
of $q_\tau^\ell(\Btau)$.

Suppose that $P_{\ell,d}$ is the Taylor polynomial of $\widehat{F_\ell}$ centred at the origin of order $d$.
 i.e.,
\[
\widehat{F_\ell}(\eta)=P_{\ell,d}(\eta)+o(|\eta|^d) \qquad \eta\to 0.
\]
Then $P_{\ell,d}$ and all of its homogeneous components are $K$--equivariant and $\spaziW_\tau^\ell$--valued.
Therefore 
$
P_{\ell,d}(\base)
$
is a constant multiple of $q_\tau^\ell(\Btau)$. Using Proposition~\ref{p:gradopolinomi} we conclude that $P_{\ell,d}=0$ if $d<\undue\ell$.

We can therefore write 
\[
\widehat{F_\ell}(\ro\base)=f_\ell(\ro)\,q_\tau^\ell(\Btau)=c_\ell\, \ro^{\undue\ell}\, q_\tau^\ell(\Btau)+o(\ro^{\undue\ell}) \qquad \ro\to 0\ .
\]
Clearly the so-obtained function $f_\ell$ is in $\cS(\bR)$ and
of the same parity as $\undue\ell$.

Hence
\[
f_\ell(\ro)=c_\ell\, \ro^{\undue \ell}+o(\ro^{\undue\ell}) \qquad \ro\to 0\ ,
\] 
and by Hadamard's division Lemma~\cite{Jet}
there exists an even smooth function $\tilde f_\ell$ on $\bR$ such that $f_\ell(\ro)=\ro^{\undue\ell}\, \tilde f_\ell(\ro)$ and $\tilde f_\ell(0)=c_\ell$.
With $h_\ell=\tilde f_\ell(|\cdot|)$, we conclude that when $\eta=|\eta|\eta'$
\[
\widehat{F_\ell}(\eta)= \tilde f_\ell (|\eta|)\, |\eta|^{\undue\ell}\,\tau(k_{\eta'})\,q_\tau^\ell(\Btau)\tau(k_{\eta'})^*
= h_\ell(\eta)\,q^\ell_\tau(Q_\tau(\eta)).
\]
Therefore $h_\ell$ is a uniquely determined scalar invariant Schwartz function on $\bR^n$ and 
taking the inverse Fourier transform, for a scalar invariant Schwartz function $\gamma_\ell$ on $\bR^n$,
\[
F_\ell(y)=q^\ell_{\tau} (\DD_\tau)\, \gamma_\ell(y).
\]
Since $q^\ell_{\tau}$ is a polynomial of degree $\ell$, rearranging the terms in the sum 
\[
F=\sum_{\ell=0}^{\finetau} F_\ell=\sum_{\ell=0}^{\finetau} q^\ell_{\tau} (\DD_\tau)\, \gamma_\ell(y)
\]
we obtain the desired decomposition.
\end{proof}
 
\begin{lemma}\label{FourierF}
Let $F$ be in 
$\Smatr$. Then 
there  exist  $\gamma_0,\ldots, \gamma_{a_\tau}$ in $  \cS(\bR)$ such that 
$$
\cG_\tau^\sharp F(\ro^2, \aut{s}\ro^\undue)=
\sum_{i=0}^{\finetau}{\gamma_i}(\ro^2)\, 
(\ro^\undue\, \aut{s})^i
\qquad \forall \ro\geq 0, \quad s=0,1,\ldots,\finetau.
$$
\end{lemma}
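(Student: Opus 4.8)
The plan is to combine the decomposition from Proposition~\ref{sommaF2} with the computation of the spectrum and the eigenvalue action of $\DD_\tau$ from Proposition~\ref{spettro}. First I would apply Proposition~\ref{sommaF2} to write $F=\sum_{i=0}^{\finetau}\DD_\tau^i g_i$ with scalar radial Schwartz functions $g_i\in\cS(\bR^n)^K$. Taking Fourier transforms and using \eqref{DtauQ} repeatedly gives $\widehat F(\eta)=\sum_{i=0}^{\finetau}\widehat{g_i}(\eta)\,Q_\tau(\eta)^i$, since $\widehat{\DD_\tau^i g_i}=\widehat{g_i}\,Q_\tau^i$ (here $\widehat{g_i}$ is scalar so it commutes with everything).

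Next I would evaluate this at $\eta=\ro\base$ and feed it into the definition \eqref{cGtau} of $\cG_\tau^\sharp$. By \eqref{cGtau} we have $\cG_\tau^\sharp F(\ro^2,\aut{s}\ro^\undue)=b_{s,\tau}(\widehat F,\ro)=\frac1{d_{\sigma_s}}\tr\big(P_{s,\tau}\widehat F(\ro\base)\big)$. Substituting the expression for $\widehat F(\ro\base)$ and using $Q_\tau(\ro\base)=\ro^\undue Q_\tau(\base)=\ro^\undue\Btau$ (homogeneity of $Q_\tau$ of degree $\undue$, property (i)), I get
\[
\cG_\tau^\sharp F(\ro^2,\aut{s}\ro^\undue)=\sum_{i=0}^{\finetau}\widehat{g_i}(\ro\base)\,\frac1{d_{\sigma_s}}\tr\big(P_{s,\tau}\,(\ro^\undue\Btau)^i\big).
\]
Since $\Btau=\sum_{s'}\aut{s'}P_{s',\tau}$ and the $P_{s',\tau}$ are orthogonal projections onto the $\spaziorpU_{\sigma_{s'}}$, we have $\tr(P_{s,\tau}\Btau^i)=\aut{s}^i\,d_{\sigma_s}$, so the $i$-th term collapses to $\widehat{g_i}(\ro\base)\,(\ro^\undue\aut{s})^i$. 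Because $g_i$ is radial on $\bR^n$, $\widehat{g_i}(\ro\base)$ is a radial function of $\ro$, hence an even function of $\ro$, so it equals $\gamma_i(\ro^2)$ for some $\gamma_i\in\cS(\bR)$ (Hadamard/Whitney for even functions). This yields exactly the claimed formula.

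There is essentially no serious obstacle here: the statement is a bookkeeping consequence of the already-established structural results. The only point requiring a word of care is the passage from the radial Schwartz function $\widehat{g_i}$ on $\bR^n$ to the one-variable Schwartz function $\gamma_i$ with $\widehat{g_i}(\ro\base)=\gamma_i(\ro^2)$; this is the standard fact that a radial Schwartz function on $\bR^n$ is a Schwartz function of $|\cdot|^2$, which can be cited from the same Whitney-type results (\cite{Jet}) used elsewhere in the paper. One should also note that $\widehat{g_i}(\ro\base)$, being radial, is automatically the restriction to the half-line of an even function, so no sign ambiguity arises. The parity and smoothness bookkeeping is identical to what appears in the proof of Proposition~\ref{sommaF2}, so I would simply refer back to it rather than repeat it.
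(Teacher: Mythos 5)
Your argument is correct and follows the same route as the paper: apply Proposition~\ref{sommaF2}, pass to the Fourier side via \eqref{DtauQ}, use $Q_\tau(\ro\base)=\ro^\undue\Btau$ together with $\Btau^i=\sum_s\aut{s}^iP_{s,\tau}$, and invoke the Schwartz--Mather (Whitney) theorem to write the radial $\widehat{g_i}$ as $\gamma_i(|\cdot|^2)$ with $\gamma_i\in\cS(\bR)$. The only cosmetic difference is that you make the trace computation $\tr(P_{s,\tau}\Btau^i)=\aut{s}^i d_{\sigma_s}$ explicit where the paper reads off the coefficient $b_{s,\tau}$ directly from the projection decomposition of $\widehat F(\ro\base)$.
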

 
 \begin{proof} 

Let $F$ be in 
$\Smatr$ and
$g_i$ be the $K$--invariant scalar functions we have associated to $F$ in Proposition~\ref{sommaF2}.
Then $\widehat{g_i}$ is $K$--invariant, so that by Schwartz--Mather there exists a  function $\gamma_i \in \cS(\bR)$ such that 
\[
\widehat{g_i}(\eta)=\gamma_i(|\eta|^2)
\]
and {by \eqref{DtauQ}}
\[
\widehat F (\ro\base)
=\sum_{i=0}^{\finetau}\widehat{g_i}(\ro\base)\, \widehat{\DD_\tau^i}(\ro\base)
= \sum_{i=0}^{\finetau}{\gamma_i}(\ro^2)\, \ro^{\undue i}\,B_\tau^i
= \sum_{i=0}^{\finetau}{\gamma_i}(\ro^2)\, \sum_{s=0}^{\finetau} (\ro^\undue\, \aut{s})^i\, P_{s,\tau}
\]
Finally, {by \eqref{cGtau},}
 $$
\cG_\tau^\sharp F(\ro^2, \aut{s}\ro^\undue)=
b_{s,\tau}(\widehat F,\ro)= \sum_{i=0}^{\finetau}{\gamma_i}(\ro^2)\, 
(\aut{s}\,\ro^\undue)^i
\qquad \forall \ro\geq 0
 $$
and the thesis follows.
\end{proof}
  
Given $f\in \cS(G)^{{\rm Int}(K)}$, via the map $A_\tau$, the following result 
gives the existence of
  an extension of $\cG_\tau f_\tau$  for each $\tau\in \widehat K$.
  
\begin{corollary}\label{estensioneGn} 
Let $F$ be in 
$\Smatr$ . Then
\begin{enumerate}[\rm(i)]
\item
 There exists $u\in\cS(\bR^2)$ such that 
${u}_{ |_{\Sigma_\cD^\tau}}=\cG^\sharp_\tau F$. 
 
\item The map $\cG^\sharp_\tau $ is an isomorphism from $\Smatr$  onto $\cS(\Sigma_\cD^\tau)$.
\item The map $\cG_\tau $ is an isomorphism from $\Stau$  onto $\cS(\Sigma_\cD^\tau)$.

\end{enumerate}
\end{corollary}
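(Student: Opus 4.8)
The plan is to prove the three assertions in sequence, each one essentially a packaging of the lemmas just established. For part (i), I would start from Lemma~\ref{FourierF}, which gives the explicit formula
\[
\cG_\tau^\sharp F(\ro^2,\aut{s}\ro^\undue)=\sum_{i=0}^{\finetau}\gamma_i(\ro^2)\,(\aut{s}\,\ro^\undue)^i,
\qquad \gamma_i\in\cS(\bR).
\]
The right-hand side is a polynomial expression in the two ``spectral'' variables $\xi_1=\ro^2$ and $\xi_2=\aut{s}\ro^\undue$ once we arrange the powers correctly. Concretely, each term $\gamma_i(\ro^2)(\aut{s}\ro^\undue)^i$ should be rewritten as $\xi_2^i$ times a function of $\xi_1$: when $\undue=2$ (the case $n=4$) we have $(\ro^\undue\aut{s})^i=\xi_2^i$ outright, and $\gamma_i(\ro^2)=\gamma_i(\xi_1)$, so the natural candidate extension is
\[
u(\xi_1,\xi_2)=\sum_{i=0}^{\finetau}\gamma_i(\xi_1)\,\xi_2^i\in\cS(\bR^2);
\]
when $\undue=1$ (the case $n=3$) we have $\xi_2^2=\aut{s}^2\xi_1$ only on the ``diagonal'' relation $\xi_1=\ro^2$, so pairs of consecutive powers must be grouped using $(\ro\aut{s})^i=\xi_2\cdot(\ro\aut{s})^{i-1}$ and $(\ro\aut{s})^{2}=\aut{s}^2\xi_1$... but this is exactly where Corollary~\ref{Phi3} and the even/odd splitting are needed, together with the Whitney-type fact that an even Schwartz function of $\ro$ is a Schwartz function of $\ro^2$. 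The cleanest route for $n=3$ is: split $F=\Fe+\Fo$, handle each half separately using Corollary~\ref{Phi3} so that on each half only the half-parabolas with $\aut{s}\ge0$ (or $\le0$) matter, and on that half $\xi_2\ge0$ and $\xi_2=\aut{s}\ro$ together with $\xi_1=\ro^2$ make $\ro$ a smooth function of $(\xi_1,\xi_2)$ away from the origin, so the polynomial $\sum_i\gamma_i(\xi_1)\xi_2^i/\xi_1^{\lfloor i/2\rfloor}\cdots$ — no: more simply, one observes that $\sum_{i}\gamma_i(\ro^2)(\aut{s}\ro)^i$, viewed as a function of $(\xi_1,\xi_2)$ on the parabola family, extends because each $\gamma_i(\ro^2)\ro^i$ is, by parity of $\undue i$, a smooth even function of $\ro$ hence of the form $\tilde\gamma_i(\ro^2)$ when $i$ is even and $\ro\,\tilde\gamma_i(\ro^2)$ when $i$ is odd, and in the latter case $\ro^i\aut{s}^i$ pairs with one factor $\xi_2=\aut{s}\ro$. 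I would therefore set, for $n=3$, $u(\xi_1,\xi_2)=\sum_{i \text{ even}}\tilde\gamma_i(\xi_1)\,\xi_2^i+\sum_{i\text{ odd}}\tilde\gamma_i(\xi_1)\,\xi_2^{i-1}\cdot\xi_2=\sum_i\tilde\gamma_i(\xi_1)\xi_2^i$ — again a genuine element of $\cS(\bR^2)$ — and check it restricts correctly on each half-parabola, invoking Corollary~\ref{Phi3} to see the two halves glue. This verification, matching the candidate $u$ against $\cG_\tau^\sharp F$ on $\Sigma_\cD^\tau$, is the only place requiring care, and is the main (though modest) obstacle; the algebra is routine once the parity bookkeeping is set up.

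For part (ii), injectivity of $\cG_\tau^\sharp$ is immediate: $\cG_\tau^\sharp=\cG_\tau\circ A_\tau^{-1}$ with $A_\tau$ an isomorphism (Corollary~\ref{isoAtau}) and $\cG_\tau$ injective because $L^1(G)^{{\rm Int}(K)}_\tau$ is semisimple (used already in Corollary~\ref{nucleo-tipo-tau}). Surjectivity onto $\cS(\Sigma_\cD^\tau)$ follows from part (i), since every element of $\cS(\Sigma_\cD^\tau)$ is by definition a restriction of some $v_\ast\in\cS(\bR^2)$ and, conversely, $\cG_\tau^{-1}v$ lands in $\cS(G)^{{\rm Int}(K)}_\tau$ by Corollary~\ref{nucleo-tipo-tau}, hence $A_\tau\cG_\tau^{-1}v\in\Smatr$. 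Continuity of $\cG_\tau^\sharp$ in both directions: one direction is Corollary~\ref{nucleo-tipo-tau} combined with Proposition~\ref{norme-f-Af} (translating the norm estimate on $\cG_\tau^{-1}$ to $(\cG_\tau^\sharp)^{-1}$), and the other follows from the open mapping theorem for Fréchet spaces once we know $\cG_\tau^\sharp$ is a continuous bijection — or, since we are not tracking the $\tau$-dependence here, one can simply invoke that $\Smatr$ and $\cS(\Sigma_\cD^\tau)$ are Fréchet and $\cG_\tau^\sharp$ is a continuous linear bijection. I would phrase this paragraph tersely, flagging that the norm bounds are deliberately not made explicit here and referring forward to Section~\ref{finale}.

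For part (iii), this is just part (ii) transported through $A_\tau$: by Corollary~\ref{isoAtau} the map $A_\tau:\Stau\to\Smatr$ is a topological isomorphism, and $\cG_\tau=\cG_\tau^\sharp\circ A_\tau$ by the very definition of $\cG_\tau^\sharp$ (see the diagram following Corollary~\ref{isoAtau}), so composing an isomorphism with an isomorphism gives that $\cG_\tau:\Stau\to\cS(\Sigma_\cD^\tau)$ is an isomorphism. I expect the whole corollary to take under a page; the one genuinely substantive step is the parity/gluing argument in (i) for $n=3$, everything else being formal consequences of results already in place.
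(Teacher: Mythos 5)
Your overall route through (ii) and (iii) matches the paper (continuity from Corollary~\ref{nucleo-tipo-tau}, transport through $A_\tau$ via Corollary~\ref{isoAtau}, and deferring sharp $\tau$-dependence to Section~\ref{finale}), but there is a genuine gap in your treatment of (i), and a piece of unnecessary complication there as well.

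The gap: you twice assert that
\[
u(\xi_1,\xi_2)=\sum_{i=0}^{\finetau}\gamma_i(\xi_1)\,\xi_2^i
\]
is ``a genuine element of $\cS(\bR^2)$''. It is not. Each $\gamma_i$ is Schwartz on $\bR$ in the single variable $\xi_1$, so unless $\gamma_i\equiv 0$ for all $i\ge 1$, the sum grows polynomially in $\xi_2$ at fixed $\xi_1$ (e.g. take $\gamma_1$ a Gaussian: $u(0,\xi_2)\sim\xi_2$). The point of the paper's proof is precisely to handle this: one multiplies by a smooth cutoff $\eta$ that equals $1$ on the closed convex hull $\cC_\tau$ of $\Sigma_\cD^\tau$, vanishes outside a unit neighborhood $\cC^1_\tau$, and has bounded derivatives of all orders. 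On $\cC^1_\tau$ one has $|\xi_2|\le R\,\xi_1$ once $\xi_1>1$ (because $\aut{s}\ro^\undue$ is dominated by a constant times $\ro^2$ for $\ro$ large, in both cases $\undue=1,2$), so the decay of $\gamma_i$ in $\xi_1$ dominates $\xi_2^i$ on $\supp\eta$ and $\eta\cdot\sum_i\gamma_i(\xi_1)\xi_2^i$ is indeed Schwartz, while still restricting to $\cG_\tau^\sharp F$ on $\Sigma_\cD^\tau$ because $\eta\equiv 1$ there. Without the cutoff your candidate does not lie in $\cS(\bR^2)$ and part (i) is not established.

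The unnecessary detour: the even/odd splitting, Corollary~\ref{Phi3}, and the parity bookkeeping that you describe for $n=3$ are not needed here. On $\Sigma_\cD^\tau$ the defining relations are $\xi_1=\ro^2$ and $\xi_2=\aut{s}\ro^\undue$, so the substitution $\gamma_i(\ro^2)(\aut{s}\ro^\undue)^i=\gamma_i(\xi_1)\,\xi_2^i$ is purely formal and works identically for $\undue=1$ and $\undue=2$; no conversion of $\ro^i$ into a function of $\ro^2$ is ever required, and the two ``halves'' $\aut{s}\gtrless 0$ are already glued by the single expression $\sum_i\gamma_i(\xi_1)\xi_2^i$. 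The parity considerations and Corollary~\ref{Phi3} do enter the paper, but later, in the proof of Proposition~\ref{taylor} for $n=3$, where one solves the linear systems for the Maclaurin coefficients; they play no role in this corollary.
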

 
\begin{proof} Let $\cC_\tau$ be the closed convex hull of  $\Sigma_\cD^\tau$ and 
\[
\cC^1_\tau=\{(\xi_1,\xi_2)\, : \, \mathrm{dist}((\xi_1,\xi_2),\cC_\tau)<1\}\ .
\]
Note that if $(\xi_1,\xi_2)$ is in $\cC^1_\tau$ and  $\xi_1>1$, then $|\xi_2| \le R\, \xi_1$ for some $R>0$.

Let $\eta$ be a smooth function on $\bR^2$ which takes value 1 on
 $\cC_\tau $, vanishes outside  $\cC_\tau^1$ and with bounded derivatives  of any order.  
 Let $F$ be in $\Smatr$ and let $\gamma_0,\ldots, \gamma_n$ in $  \cS(\bR)$ be as in Lemma~\ref{FourierF} such that 
\[
\begin{aligned}
\cG_\tau^\sharp F\big(\ro^2, \aut{s}\,\ro^\undue\big)&=
\sum_{i=0}^{\finetau}(\aut{s}\,\ro^\undue)^i\,{\gamma_i}(\ro^2)\, 
\qquad \forall \ro\geq 0, \quad s=0,1,\ldots,a_\tau.
\end{aligned}
\]

Define the Schwartz  function   $g$   on $\bR^2$ by 
$$
u(\xi_1, \xi_2)=\eta(\xi_1, \xi_2) \, 
\sum_{i=0}^{\finetau}\xi_2^i\,{\gamma_i}\, (\xi_1)
\qquad \forall (\xi_1, \xi_2)\in \bR^2.
$$
Then  ${u}_{ |_{\Sigma_\cD^\tau}}=\cG^\sharp_\tau F$.

By Corollary~\ref{nucleo-tipo-tau}   the map $\cG_\tau^{-1}$ is continuous and (ii) and (iii) follow by Corollary~\ref{isoAtau}.
 \end{proof}

\bigskip
\section{Growth control for large  $\tau$ and conclusion}\label{finale}

In this section we conclude the proof of Theorem~\ref{main}.
As required in Theorem~\ref{rendiconti},
for each $\tau$ and each order $N$, given $f$ in $\cS(G)^{{\rm Int}(K)}$, we produce an extension of $\cG_\tau f_\tau$ with rapidly $\tau$-decaying norms up to order $N$. 
For this purpose we will follow steps (i) and (ii) in the Introduction.

\smallskip
We begin by observing that, as in the case of the euclidean Fourier transform, the derivatives of the spherical transform
$\cG_\tau f_\tau$  are controlled by Schwartz norms of $f_\tau$.
 This is shown in the next  lemma, whose use is twofold.
 On the one hand
 it gives   estimates of the coefficients of the formal power series $s_\tau$ in (i) of the introduction (Proposition \ref{taylor});
 on  the other hand it
  allows us   to  control uniformly in $\tau$ the norms  of the extension of $\cG_\tau f_\tau$ 
 under the assumption that $\cG_\tau f_\tau$ vanishes of infinite order at the origin and   (ii) will follow
 (Proposition~\ref{p:nulla})
 
  \begin{lemma}\label{Gderivatives} Let $\tau$ be in $ \widehat K$. 
There exist  sequences  $\{C_\der\}$ and $\{N_\der\}$, independent of $\tau$ and increasing,  such that for every $\der\in \bN$ 
$$
\left|(d/dt)^\der\cG_{\tau}f_\tau\big(t^{2/\undue},\aut{s}t\big)\right| 
\le C_{\der} \,\|f_\tau\|_{(N_\der)}\ ,
\qquad \forall t\geq 0\quad  \forall f_\tau\in \Stau.
$$

\end{lemma}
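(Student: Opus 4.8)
The plan is to pass to the $\mathrm{End}(\spaziorp_\tau)$-valued picture and reduce the statement to differentiating a Fourier transform along a ray through the origin. It would be tempting to argue from Lemma~\ref{FourierF}, but the functions $\gamma_i$ there, together with the factors $\aut{s}$ and $\finetau$, depend badly on $\tau$; the point is instead to use directly the identity~\eqref{cGtau} together with the cancellation $\operatorname{rank}P_{s,\tau}=\dim\spaziorpU_{\sigma_s}=d_{\sigma_s}$ and, at the end, Proposition~\ref{norme-f-Af}, so that every constant that appears is independent of $\tau$.

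Concretely, set $F=A_\tau f_\tau\in\Smatr$, so that $\cG_\tau f_\tau=\cG^\sharp_\tau F$, and let $\ro=t^{1/\undue}$. By Proposition~\ref{sferichematr} and~\eqref{cGtau},
\[
\cG_\tau f_\tau\big(t^{2/\undue},\aut{s}\,t\big)=\cG^\sharp_\tau F\big(\ro^2,\aut{s}\,\ro^\undue\big)=\frac1{d_{\sigma_s}}\tr\big(P_{s,\tau}\,\widehat F(\ro\base)\big).
\]
First I would differentiate the right-hand side in $t$. When $n=3$ (so $\undue=1$ and $t=\ro$) the map $t\mapsto\widehat F(t\base)$ is the restriction of the smooth function $\widehat F$ to the first coordinate axis, and differentiation under the integral sign gives $\de_t^\der\widehat F(t\base)=\int_{\bR^3}F(y)\,(-iy_1)^\der e^{-ity_1}\,dy$. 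When $n=4$ (so $\undue=2$ and $t=\ro^2$) I would first observe that $F$ is even: the element $(1,-1)\in\grpSU\times\grpSU$ acts on $\bR^4\cong\bH$ by $y\mapsto-y$, and $\tau_{\enne,\emme}(1,-1)=\tau_\enne(1)\otimes\tau_\emme(-1)=(-1)^{2\emme}\,\mathrm{id}$ is a scalar, so the equivariance~\eqref{equivariance} forces $F(-y)=F(y)$; hence $\widehat F(\ro\base)=\int_{\bR^4}F(y)\cos(\ro y_1)\,dy$ is an even function of $\ro$, and $t\mapsto\widehat F(\sqrt t\,\base)$ is smooth on $[0,\infty)$. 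Writing $\theta_0=\cos$ and $\theta_{j+1}(x)=\theta_j'(x)/(2x)$ (each $\theta_j$ being even, so that $\theta_j'(x)/x$ extends smoothly across $0$), an easy induction shows that every $\theta_\der$ is smooth and bounded on $\bR$ and that $\de_t^\der\cos(\sqrt t\,y_1)=y_1^{2\der}\theta_\der(\sqrt t\,y_1)$, whence $\de_t^\der\widehat F(\sqrt t\,\base)=\int_{\bR^4}F(y)\,y_1^{2\der}\theta_\der(\sqrt t\,y_1)\,dy$. In both cases one gets
\[
\de_t^\der\,\cG_\tau f_\tau\big(t^{2/\undue},\aut{s}\,t\big)=\frac1{d_{\sigma_s}}\tr\Big(P_{s,\tau}\int_{\bR^n}F(y)\,\Theta_\der(y_1,t)\,dy\Big),
\]
where $\Theta_\der$ satisfies $|\Theta_\der(y_1,t)|\le C'_\der\,|y_1|^{\undue\der}$ for all $t\ge0$, with $C'_\der$ an absolute constant.

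To finish, since $\operatorname{rank}P_{s,\tau}=d_{\sigma_s}$ the linear functional $d_{\sigma_s}^{-1}\tr(P_{s,\tau}\,\cdot\,)$ has norm $1$ on $\mathrm{End}(\spaziorp_\tau)$ with the operator norm, so the displayed quantity is bounded in absolute value by
\[
C'_\der\int_{\bR^n}\|F(y)\|_{\mathrm{op}}(1+|y|)^{\undue\der}\,dy
\le C'_\der\Big(\int_{\bR^n}(1+|y|)^{-2n-2}\,dy\Big)^{1/2}\big\|(1+|\cdot|)^{\undue\der+n+1}F\big\|_{L^2}
\le C_\der\,\|F\|_{(\undue\der+n+1)},
\]
by the Cauchy--Schwarz inequality and $\|\cdot\|_{\mathrm{op}}\le\|\cdot\|_{\mathrm{HS}}$, with $C_\der$ absolute; and $\|F\|_{(N)}=\|A_\tau f_\tau\|_{(N)}\le\|f_\tau\|_{(N)}$ by Proposition~\ref{norme-f-Af} (as $d_\tau\ge1$). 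Taking $N_\der=\undue\der+n+1$ and then replacing $\{C_\der\}$ and $\{N_\der\}$ by increasing majorants gives the claim, all constants being independent of $\tau$, $s$ and $t$. The one step that is not entirely routine is the change of variables $t=\ro^2$ in the case $n=4$, where one has to check that the $t$-derivatives remain bounded; this is exactly what the evenness of $F$ and the auxiliary functions $\theta_\der$ take care of.
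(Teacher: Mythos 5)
Your proof is correct and follows essentially the same route as the paper's (much terser) argument: pass to $F=A_\tau f_\tau$, use~\eqref{cGtau} to write the $\tau$-spherical transform as $d_{\sigma_s}^{-1}\operatorname{tr}\big(P_{s,\tau}\widehat F(t^{1/\undue}\base)\big)$, handle $n=3$ by direct differentiation under the integral and $n=4$ via evenness of $F$ (so $t\mapsto\widehat F(\sqrt t\,\base)$ is smooth), and finish with Proposition~\ref{norme-f-Af}. You simply make explicit the points the paper leaves implicit, namely the verification that $F$ is even for $n=4$ via the central element $(1,-1)$, the smooth bounded functions $\theta_\der$ controlling the $t$-derivatives, the norm-one bound for $d_{\sigma_s}^{-1}\operatorname{tr}(P_{s,\tau}\,\cdot\,)$, and the Cauchy--Schwarz bookkeeping leading to $N_\der=\undue\der+n+1$.
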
 

\begin{proof} 

 Let $f_\tau\in \Stau$ and $F=A_\tau f_\tau$.
By~\eqref{cGtau}
$$
\cG_\tau f_\tau(t^{2/\undue},\aut{s}t)
=\cG_\tau^\sharp F(t^{2/\undue},\aut{s}t)=
   \frac{1}{d_{\sigma_s}}\,\tr\big(\widehat F(t^{1/\undue}\,\base) P_{s,\tau} \big).
$$
We recall that when $n=3$, then $\undue=1$ and the result is immediate. When $n=4$ we have $\undue=2$, but also
$\widehat F(-y)=\widehat F(y)$, for every $y\in \bR^4$. Therefore the thesis follows easily from Propostition~\ref{norme-f-Af}. 
\end{proof}

 \medskip 
We   introduce the following notation for the derivatives at the origin of a function $\psi \in \cS(\Sigma_\cD^\tau)$
 $$
c_{s,\tau, \der}(\psi)= \left(\frac{d}{dt}\right)^{\der}_{|_{t=0^+}}\psi\big(t^{2/\undue},\aut{s}t \big) 
 \qquad \der\in \bN,\quad s=0,1,\ldots a_\tau \ .
$$
 Note that the  previous lemma gives estimates of $c_{s,\tau, \der}(\cG_\tau f_\tau)$ for each $ f_\tau\in \Stau$.

\subsection{Control in $\tau$ of   derivatives at the origin}
In this section we prove the following statement.

 \begin{proposition}\label{taylor} 
Let $f_\tau$ be in $\Stau$ 
and $u $ a smooth extension of $\cG_\tau f_\tau$  to $\bR^2$.
Then
for every   integer $\der\geq 0 $ 
$$
\left| \binom{\der}{q} \partial_1^{\der-q}\partial_2^q u(0,0) \right| \le 
C_{\der}\, d_\tau^{\der}\,  \|f_\tau\|_{(N_{\der})}\ ,\qquad q=0,1,\ldots \der
$$
where   the  sequences $\{C_{\der}\}$ and $\{N_{\der}\}$ are independent of $\tau$ and    increasing.
\end{proposition}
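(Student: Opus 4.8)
The plan is to read off $\partial_1^{\der-q}\partial_2^q u(0,0)$ from the one‑variable restrictions of $u$ to the finitely many curves whose union is $\Sigma_\cD^\tau$. On the $s$‑th such curve, parametrised by $t\mapsto\big(t^{2/\undue},\aut{s}t\big)$ with $t\ge0$, the function $u$ coincides with $\cG_\tau f_\tau$, so its $\der$‑th one‑sided derivative at $t=0$ is $c_{s,\tau,\der}(\cG_\tau f_\tau)$, and Lemma~\ref{Gderivatives} bounds this by $C_\der\|f_\tau\|_{(N_\der)}$ uniformly in $\tau$ and $s$. On the other hand, expanding $u$ in its Taylor polynomial at the origin and substituting the parametrisation gives, when $n=4$ (so $\undue=2$ and the monomial $\partial_1^a\partial_2^b u(0,0)\,t^{2a/\undue}(\aut{s}t)^b$ carries $t^{a+b}$),
\[
c_{s,\tau,\der}(\cG_\tau f_\tau)=\sum_{q=0}^{\der}\binom{\der}{q}\,\partial_1^{\der-q}\partial_2^q u(0,0)\,\aut{s}^{\,q}\qquad (s=0,\dots,\finetau);
\]
thus the numbers $c_{s,\tau,\der}(\cG_\tau f_\tau)$ are the values at $\aut{0},\dots,\aut{\finetau}$ of the polynomial whose $q$‑th coefficient is precisely $\binom{\der}{q}\partial_1^{\der-q}\partial_2^q u(0,0)$. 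When $n=3$ (so $\undue=1$ and the monomial carries $t^{2a+b}$) the analogous identity taken at order $2\der-q$ involves only the derivatives $\partial_1^a\partial_2^b u$ with $2a+b=2\der-q$, and recovers $\binom{\der}{q}\partial_1^{\der-q}\partial_2^q u(0,0)$ up to a factorial factor $\der!/(2\der-q)!\le1$; moreover the resulting polynomial in $\aut{s}$ is even or odd, so it is convenient to view it as a polynomial in $\aut{s}^2$, with $\aut{s}^2$ ranging in $\{0,1,4,\dots,\emme^2\}$ — equivalently, to first split $f_\tau$ into its even and odd parts and invoke Corollary~\ref{Phi3}.

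Next I would invert this (generalised) Vandermonde system by Lagrange interpolation and estimate the weights, using two properties of the spectral eigenvalues. First, $\max_s|\aut{s}|\le C\,d_\tau$ with $C$ absolute: immediate for $n=3$, where $\aut{s}=-\emme+s\in\{-\emme,\dots,\emme\}$ and $d_\tau=2\emme+1$, and a direct consequence for $n=4$ of the formula $\aut{s}=4(\emme-s)(\enne-s)-2s(s+1)$ of Lemma~\ref{autovSO4} together with $d_\tau=(2\emme+1)(2\enne+1)$. Second, the $\aut{s}$ are pairwise distinct with consecutive gaps bounded below by an absolute constant: for $n=3$ the gaps equal $1$, while for $n=4$, writing $\aut{s}=2s^2-(4\emme+4\enne+2)s+4\emme\enne$, one checks that $s\mapsto\aut{s}$ is strictly decreasing on $\{0,\dots,2(\emme\wedge\enne)\}$ with $\aut{s}-\aut{s+1}=4(\emme+\enne-s)\ge4$. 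By the separation, the denominators $\prod_{j\ne s}|\aut{s}-\aut{j}|$ occurring in the Lagrange basis polynomials are $\ge1$; by the size bound, the coefficients of the numerators $\prod_{j\ne s}(X-\aut{j})$ are, up to binomial factors, $\le(C d_\tau)^{\der}$. Hence each coefficient of the interpolating polynomial is a combination of at most $\der+1$ of the numbers $c_{s,\tau,\der}(\cG_\tau f_\tau)$ with weights $\le C_\der\,d_\tau^{\der}$.

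Putting the two steps together, and using the bound of Lemma~\ref{Gderivatives} on the $c_{s,\tau,\der}$'s together with the factorial factor $\le1$ in the case $n=3$, one gets $|\binom{\der}{q}\partial_1^{\der-q}\partial_2^q u(0,0)|\le C_\der\,d_\tau^{\der}\,\|f_\tau\|_{(N_\der)}$ with new sequences $\{C_\der\}$, $\{N_\der\}$ still independent of $\tau$; passing to running maxima makes them increasing. In the case $n=3$ one in fact obtains a bound with no power of $d_\tau$, since after passing to the variable $\aut{s}^2$ one may interpolate through the $\le\der+1$ smallest nodes, all of size $O(\der^2)$.

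I expect the main obstacle to be the Lagrange‑weight estimate, and within it the lower bound $\prod_{j\ne s}|\aut{s}-\aut{j}|\ge1$: this is precisely where the explicit separation of the spectral eigenvalues enters, which for $n=4$ rests on the short computation with the quadratic $\aut{s}=2s^2-(4\emme+4\enne+2)s+4\emme\enne$ above. A secondary technical point is the parity bookkeeping for $n=3$, needed so that the power of $d_\tau$ does not double. When $\der$ exceeds $\finetau$ there are fewer interpolation nodes than coefficients, and only the genuinely determined part of the jet — a polynomial of degree $\le\finetau$ by Lemma~\ref{FourierF} — is controlled; this is harmless, since the estimate is used for $\tau$ large compared with $\der$, the regime relevant for the rapid decay required in Theorem~\ref{rendiconti}.
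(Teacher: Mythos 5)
Your proof is correct, and for $n=4$ it takes a genuinely different route through the Lagrange‑weight estimate, which is the crux of the argument. The paper's proof first translates the nodes by $\mass_{\tau,\der}=\aut{s^\ast_\der}$ (Lemma~\ref{traslazione}), chosen so that the Vandermonde ratios $|V_{s,q}/V|$ are bounded by $\binom{\der}{q}$ \emph{uniformly}; the factor $d_\tau^{\der}$ then appears only when undoing the translation, via $|\mass_{\tau,\der}|\le d_\tau$. You instead keep the original nodes and use two arithmetic facts you extract directly from Lemma~\ref{autovSO4}: the $\aut{s}$ are pairwise‑distinct integers (since $\aut{s}=(2\emme-2s)(2\enne-2s)-2s(s+1)\in\bZ$), so the Lagrange denominators are $\ge1$; and $\max_s|\aut{s}|\le d_\tau$, so the numerator coefficients are $\le\binom{\der}{q}d_\tau^{\,\der-q}$. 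This gives $|x_q|\le(\der+1)\binom{\der}{q}d_\tau^{\,\der-q}\,C_\der\|f_\tau\|_{(N_\der)}$ directly, matching the claimed bound with $C_\der'=(\der+1)2^{\der}C_\der$. The upshot is that Lemma~\ref{traslazione} can be bypassed entirely once one notices the eigenvalues are integers; the paper's centering device buys a cleaner intermediate Vandermonde estimate but nothing in the final bound. Your treatment of $n=3$ (even/odd splitting via Corollary~\ref{Phi3}, passing to $\aut{s}^2$, absorbing the $\der!/(2\der-q)!\le1$ factorial) is the same as the paper's, and your remark that no power of $d_\tau$ survives there is consistent with the paper. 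One minor point of agreement rather than disagreement: when $\der>\finetau$ the system is underdetermined; you and the paper both resolve this by appealing implicitly to Lemma~\ref{FourierF} to restrict attention to $x_q=0$ for $q>\finetau$, i.e.\ to a particular extension; the proposition's phrasing ``a smooth extension'' leaves the same slack in both treatments, and this is harmless for the downstream use in Lemma~\ref{jet-extension}.
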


The proof of Proposition~\ref{taylor} takes some effort. Indeed, its proof is rather technical and is based 
on relating (unknown) Maclaurin coefficients of any extension~$u$ up to order $\der$ with the (known) derivatives 
of $\cG_\tau f_\tau$ along the curves defining the spectrum $\Sigma^\tau_\cD\subset\bR^2$.
 
In doing so, for each order of derivation $\der$, one  obtains a (possibly overdetermined) linear system of the form
\[
\Lambda_{\tau,m} x= c_{\tau,m},
\]
where the matrix $\Lambda_{\tau,m}$ is expressed in terms of the integers $\aut{s}$ and the vectors    $x$ and $c_{\tau,m}$ are  related to
$ \partial_1^{\der-q}\partial_2^q u(0,0)$ and  $c_{s,\tau, \der}(\cG_\tau f_\tau)$, respectively.
 By Corollary~\ref{estensioneGn}, this system is solvable; our task here is to describe the norm of $x$ in terms of the norms of $c_{\tau,m}$.
 
 Estimates are split
in Section \ref{s:proof644} and \ref{s:proof643}, dealing  with the cases where $n=4$ and $n=3$, respectively.
In each case a special approach to the problem is used.
 
\subsubsection{Proof of Proposition~\ref{taylor} when $n=4$}\label{s:proof644}
 \quad
 In this case, the expression of the integers $\aut{s}$ requires some additional work, starting from the following technical lemma.

\begin{lemma}\label{traslazione} 
Let $n=4$. 
For any positive integer $ \ind\leq a_\tau$ there exists an integer $\mass_{\tau,\ind}$ such that 
if
\begin{equation}
\label{e:lambdaitau}
\la_{i,\tau,\ind}
=\aut{i}-\mass_{\tau,\ind}\qquad i=0,1,\ldots a_\tau,
\end{equation} 
then 
\[
\prod_{\substack{i=0\\ i\ne s}}^{\ind } 
\left|
\frac{\la_{i,\tau,\ind}}{\la_{i,\tau,\ind}-\la_{s,\tau,\ind}}\right|\leq 1
\qquad \forall s =0,\ldots  \ind\ .  
\]
Moreover $|\mass_{\tau,\ind}|\leq d_\tau$. 
\end{lemma}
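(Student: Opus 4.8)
The plan is to reinterpret the product as the value of a Lagrange basis polynomial at the shift $\mass_{\tau,\ind}$, and then to exploit that such a polynomial is automatically $0$ or $1$ at its own interpolation nodes. First I would record that the difference $\la_{i,\tau,\ind}-\la_{s,\tau,\ind}=\aut{i}-\aut{s}$ does not involve $\mass_{\tau,\ind}$ at all, and that the nodes $\aut{0},\dots,\aut{\ind}$ are pairwise distinct: from Lemma~\ref{autovSO4} one gets $\aut{s}-\aut{s+1}=4(\emme+\enne-s)>0$ for $0\le s\le a_\tau-1$ (recall $a_\tau=2(\emme\wedge\enne)\le\emme+\enne$), so that $\aut{0}>\aut{1}>\dots>\aut{a_\tau}$. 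Hence, denoting by $\ell_s$ the $s$-th Lagrange basis polynomial attached to the nodes $\aut{0},\dots,\aut{\ind}$,
\[
\prod_{\substack{i=0\\ i\ne s}}^{\ind}\left|\frac{\la_{i,\tau,\ind}}{\la_{i,\tau,\ind}-\la_{s,\tau,\ind}}\right|
=\prod_{\substack{i=0\\ i\ne s}}^{\ind}\left|\frac{\aut{i}-\mass_{\tau,\ind}}{\aut{i}-\aut{s}}\right|
=\bigl|\ell_s(\mass_{\tau,\ind})\bigr| ,
\]
and the lemma reduces to exhibiting an integer $\mass_{\tau,\ind}$ with $|\mass_{\tau,\ind}|\le d_\tau$ such that $|\ell_s(\mass_{\tau,\ind})|\le1$ for every $s=0,\dots,\ind$.

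The key step is the elementary identity $\ell_s(\aut{j})=\delta_{sj}$: evaluating at any one of the nodes forces every $\ell_s(\mass_{\tau,\ind})$ to lie in $\{0,1\}$. I would then simply set $\mass_{\tau,\ind}:=4\emme\enne$ for all $\ind$, which is an integer (it is $\aut{0}$, cf. Lemma~\ref{autovSO4}). With this choice $\la_{0,\tau,\ind}=0$, so for $s\ne0$ the product contains the vanishing factor indexed by $i=0$ and equals $0$, while for $s=0$ all factors equal $1$; in both cases it is $\le1$. It then remains only to check the size bound, and here $|\mass_{\tau,\ind}|=4\emme\enne<(2\emme+1)(2\enne+1)=d_\tau$, the difference of the two sides being $2(\emme+\enne)+1>0$.

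The only thing that could be an obstacle is the phenomenon familiar from Lagrange interpolation, namely that $|\ell_s|$ can be far larger than $1$ at off-node points; but since the statement asks only for the existence of one good shift subject to the constraint $|\mass_{\tau,\ind}|\le d_\tau$, this difficulty is bypassed by evaluating at a node of controlled size. Thus the sole delicate point is the trivial inequality $4\emme\enne\le(2\emme+1)(2\enne+1)$, which is why one must pick the extreme node $\aut{0}$ rather than a "centered" shift. Note that this choice makes $\la_{0,\tau,\ind}$ vanish; should that be inconvenient for the later use in the proof of Proposition~\ref{taylor}, one can instead search for a different admissible integer $\mass_{\tau,\ind}$ with all $\la_{i,\tau,\ind}\ne0$, but it is not needed for the statement above.
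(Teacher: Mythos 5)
Your proof of the lemma as stated is correct, and the key trick --- choosing the shift $\mass_{\tau,\ind}$ to be one of the nodes $\aut{s}$ so that one $\la_{i,\tau,\ind}$ vanishes and every product is $0$ or $1$ --- is exactly the one the paper uses. The paper does not, however, take $\mass_{\tau,\ind}=\aut{0}$; it takes $\mass_{\tau,\ind}=\aut{s^*_\ind}$, where $s^*_\ind\in\{0,\dots,\ind\}$ is chosen to minimize $\prod_{i\ne s}|\aut{i}-\aut{s}|$. For the statement per se this is immaterial: once $\la_{s^*_\ind,\tau,\ind}=0$, your observation that the $s$-th product is $0$ for $s\ne s^*_\ind$ and $1$ for $s=s^*_\ind$ applies verbatim, and the size bound $|\aut{s}|\le d_\tau$ holds for every node $s$, not just $s=0$ (with $\emme\le\enne$, the extreme values are $\aut{0}=4\emme\enne$ and $\aut{2\emme}=-4\emme\enne-4\emme$, both of modulus $<(2\emme+1)(2\enne+1)$).

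The minimizer does matter, though, because of how the lemma is used in the proof of Proposition~\ref{taylor}. There one needs the \emph{nondegenerate} estimate
\[
\prod_{\substack{i\ne s\\ \la_{i,\tau,\ind}\ne0}}\frac{|\la_{i,\tau,\ind}|}{|\la_{i,\tau,\ind}-\la_{s,\tau,\ind}|}\le1\qquad\forall\,s,
\]
with the vanishing factor removed, and this is genuinely stronger than the stated conclusion. The paper's proof establishes it: minimality of $s^*_\ind$ gives $\prod_{i\ne s^*_\ind}|\la_{i,\tau,\ind}|\le\prod_{i\ne s}|\la_{i,\tau,\ind}-\la_{s,\tau,\ind}|$ for every $s$, and dividing both sides by $|\la_{s,\tau,\ind}-\la_{s^*_\ind,\tau,\ind}|=|\la_{s,\tau,\ind}|$ (when $s\ne s^*_\ind$) yields the display. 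With your choice $\mass_{\tau,\ind}=\aut{0}$ this can fail: take $\emme=\enne=1$, $\ind=2$, so $(\aut{0},\aut{1},\aut{2})=(4,-4,-8)$ and $(\la_{0},\la_{1},\la_{2})=(0,-8,-12)$; for $s=1$ the nondegenerate product is $|\la_{2}|/|\la_{2}-\la_{1}|=12/4=3>1$. (Here $s^*_2=1$, and with $\mass=\aut{1}=-4$ one gets $(\la_{0},\la_{1},\la_{2})=(8,0,-4)$ and nondegenerate products $4/12$, $1$, $8/12$.) Your closing remark --- that one might instead want a shift making all $\la_{i}$ nonzero --- misses the point: the paper's shift also annihilates one $\la_i$, and that is harmless; what matters is taking the node at which the Vandermonde-type product is smallest, not an extreme one.
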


\begin{proof} 
Let $\ind\leq a_\tau$ and   $s^*_{\ind}$   an index in $  \{0,1,\ldots \ind \}$   such that  
\[
\prod_{\substack{i=0\\ i\ne s^*_\ind}}^{\ind } | \aut{i}-\aut{s^\ast_\ind} | 
  \leq \prod_{\substack{i=0\\ i\ne {s}}}^{\ind } |\aut{i}- \aut{s}|\qquad\forall s=0,1,\dots \ind .
\]
Define $\mass_{\tau,\ind}:=\aut{s^\ast_\ind}$, then $|\mass_{\tau,\ind}|$ is an integer, no bigger than $d_\tau$.

Notice that  for $i=s^*_\ind$ we have
$\lambda_{i,\tau,\ind}=\aut{i}-\mass_{\tau,\ind}=\aut{i}-\aut{s^*_\ind} =0$,
and   $\lambda_{i,\tau,\ind}$ are nonzero  integers when $i\not=s^*_\ind$. Then, for every  $s$   in $ \{0,1,\ldots \ind\ \}$,  
\[
\prod_{\substack{i=0\\ i\ne s}}^{ \ind } 
\left|
\la_{i,\tau,\ind}\right|\leq \prod_{\substack{i=0\\ i\ne s^*_d}}^{ \ind } 
\left|
\la_{i,\tau,\ind}\right|=
\prod_{\substack{i=0\\i\ne s^*_\ind}}^\ind | \aut{i}-\aut{s^*_\ind} | 
\leq 
\prod_{\substack{i=0\\ i\ne s}}^{\ind } 
\left|
\aut{i}-\aut{s}\right|
=\prod_{\substack{i=0\\ i\ne s}}^{\ind } 
\left|
\la_{i,\tau,\ind}-\la_{s,\tau,\ind}\right| \ .
\qedhere
\]
\end{proof}

\bigskip
We now continue  the proof of Proposition~\ref{taylor}. 
 
 Fix $\tau\in\widehat K$ and let $f_\tau\in\Stau$ 
and $u $ a smooth extension of 
$\cG_\tau f_\tau$
 on $\bR^2$. 
   
When $\der=0$, the inequality follows from Lemma~\ref{Gderivatives}. Given an order of derivation $\der> 0$,  let $\mass_{\tau,\der}$ be as   in Lemma~\ref{traslazione} when $m\in (0,\finetau]$ and  $\mass_{\tau,\der}=\mass_{\tau,\finetau}$  when  $\der>\finetau$. 
For each $\der> 0$,  define a smooth function $v_\der$ on $ \bR^2$ by 
$$
v_\der(\xi_1,\xi_2) 
=u(\xi_1,\xi_2+\xi_1\mass_{\tau,\der})
\qquad \forall \xi_1,\xi_2 \in \bR^2,
$$
so that, 
when $t\geq 0$ and $s=0,1,\dots,a_\tau$,
$$ 
\cG_\tau f_\tau(t,\aut{s}t)
=u\big(t,\aut{s}t )\big)
=v_\der(t,(\aut{s}-\mass_{\tau,\der})t ).
$$ 
Let    
\[
v_\der\sim \sum_{p,q}\frac{a_{p,q}}{p!q!}\,t^p\xi_2^q\ .
\]
be   the Taylor expansion at the origin 
of  the smooth function  $v_\der$.
Our next goal is to write the Taylor coefficients $a_{p,q}$ of $v_\der$ in terms of derivatives of  $ \cG_\tau f_\tau$ at the origin.
For the sake of brevity, we  write~\eqref{e:lambdaitau} as
$$
\la_s
=\aut{s}-\mass_{\tau,\der}\qquad  s=0,1,\dots, a_\tau\ ,
$$
and we have  
\beas
c_{s,\tau,\der}(\cG_\tau f_\tau)
&=
\left(\frac{d}{dt}\right)^\der_{\big|_{t=0^+}}
v_\der(t,\la_st )
\\ &=
\left(\frac{d}{dt}\right)^\der_{\big|_{t=0^+}}
 \sum_{p+q=\der} \frac{a_{p,q}}{p!q!}t^\der\, \la_s^q 
\\ &=
 \sum_{p+q=\der}\binom{\der}{q} a_{p,q}  \, \la_s^q   .
\eeas
  \bigskip

   Therefore we are led to solve the  $(\finetau +1)\times (\der+1)$ linear system
\bea\label{sistema}
 \sum_{q=0}^{\der}  \, \la_s^q \, x_{q}=c_{s,\tau,\der}(\cG_\tau f_\tau)\qquad s=0,1,\dots, a_\tau
\eea
where 
\be\label{xh}
x_q=\binom{\der}{q} a_{\der-q,q} =\binom{\der}{q}\, \partial_1^{m-q}\partial_2^q v(0,0).
\ee
Let  $\der\leq a_\tau$. By Corollary~\ref{estensioneGn}   the system~\pref{sistema} is solvable, therefore we can  consider the first $\der+1$ equations.

Denote by 
$\Lambda$ the    $(\der+1)\times (\der+1)$    matrix  
   $\Lambda=\big(\Lambda_{s,q}\big)=\big(\la_s^q\big)$. Observe   that $\la_s\not=\la_j$ if $s\not=j$ and that
all  maximal minors of $\Lambda$ are essentially nonvanishing Vandermonde determinants.   
Thus, by Cramer's rule,
\bea\label{cramer}
|x_q|&\le\sum_{s=0}^{\der }  |c_{\tau, s,\der}(\cG_\tau f_\tau)|\,\left| \frac{V_{s,q}}V\right| \ ,\qquad q=0,1,\ldots, \der ,
\eea
where $V$ is the full Vandermonde determinant with nodes  $\la_s $
and $V_{s,q}$ are its cofactors. 
Expressing $V_{s,q}$ in terms of Schur polynomials,  cf. \cite{FH}, by Lemma~\ref{traslazione} we have
$$
\left| \frac{V_{s,q}}V\right| =\bigg|
\sum_{i_1<i_2<\cdots<i_{\der -q}\,,\,i_j\ne s}\frac{\la_{i_1}\la_{i_2}\cdots \la_{i_{\der -q}}}{\displaystyle\prod_{i\ne s}(\la_i-\la_s)}
\bigg|\le 
 \binom{\der}{q}\, 
\displaystyle\prod_{i\ne s, \la_i\ne 0}
\frac{\left|\la_i\right|} {
 \left|\la_i-\la_s \right|
} \leq  \binom{\der}{q}\, 
\ .
$$
Hence by \eqref{xh}, \eqref{cramer} and  Lemma~\ref{Gderivatives}   
$$
|\partial_1^{m-q}\partial_2^q v(0,0)|
 \leq \sum_{s=0}^{\der }  
 |c_{\tau, s,\der}(\cG_\tau f_\tau)|\leq   C_m \,  (\der+1)\,  \| f_\tau\|_{(N_\der)}.
$$
Note that 
$$
\partial_1^{\der-q}\partial_2^q u(0,0)
= 
\sum_{i=0}^{\der-q}   \binom{{\der-q}}{ i}  \mass_{\tau,\der}^i
\,\,
\partial_1^{{\der-q}-i}
\partial_2^{q+i} v_\der(0,0).
$$
Thus,  
\beas
\binom {\der}{ q}|\partial_1^{\der-q}\partial_2^q u(0,0)|  
&\leq 
d_\tau^\der\,  \binom{\der}{ q}\,
\sum_{i=0}^{\der-q}\binom{{\der-q}}{i}  
|\partial_1^{{\der-q}-i}
\partial_2^{q+i} v_\der(0,0)|
\\&
\leq
d_\tau^\der\,  \binom{\der}{ q}\,
\sum_{i=0}^{\der-q}\binom{{\der-q}}{ i}  
\, (\der+1)\,  
 |c_{\tau, s,\der}(\cG_\tau f_\tau)|.
 \\&
\leq
d_\tau^\der\, m^{m/2}\,
2^m
\, (\der+1)\,  C_m \,\| f_\tau\|_{(N_\der)}\ .
\eeas
 The case $\der> a_\tau$  can be reduced to the previous one by looking for a solution $x=(x_0,x_1,\ldots ,x_{q},\ldots x_{\der})$ with $x_{q}=0$ for $q>a_\tau$.

\bigskip

\subsubsection{Proof of Proposition~\ref{taylor} when $n=3$}\label{s:proof643}
\quad

When $n=3$ we can slightly modify the arguments used for the complex motion group on $\bC^2$  \cite{ADR4}. 
Indeed, in this case there is no need to use the technical Lemma~\ref{traslazione}, so the proof is less involved.
However, in order to have a solvable system of equations, one has to deal
 separately with the odd and the even part of  
  \hbox{$F\in \Smatr$}.

Given $f_\tau\in \Stau$, let \hbox{$F=A_\tau f_\tau$}, and  denote by $\Fo$ and $\Fe$ its odd and its even part, respectively.
From Corollary~\ref{Phi3} we have
$$ 
\cG_\tau^\sharp F_\pm(t^2, \aut{s}t)=\pm\,\cG_\tau^\sharp F_\pm(t^2,- \aut{s}t)  \qquad \forall (t^2,\aut{s}t)\in \Sigma_\cD^\tau.
$$
Therefore  we can restrict ourselves to consider $s=\mu, \mu+1,\dots 2\mu$ corresponding to $\aut{s}=-\mu+s=0,1,\dots, \mu$.

 If    $u$ is a smooth extension  of $\cG_\tau f_\tau=\cG_\tau^\sharp F$ to $\bR^2$, 
 straightforward computations show that the smooth  functions $u_-$ and $u_+$, defined by 
$$
u_\pm (\xi_1,\xi_2)=\frac12 (u(\xi_1,\xi_2)\pm u(\xi_1,-\xi_2)) \qquad \forall (\xi_1,\xi_2)\in \bR^2,
$$
 extend $\cG_\tau^\sharp\Fo$ and $\cG_\tau^\sharp\Fe$, respectively.
 
  Let us consider  the odd part first. 
  If 
$$
u_-\sim
\sum_{p,q}\frac{a_{p,q}}{p!q!}\,\,\xi_1^p\,\xi_2^q
$$
is the Taylor expansion at the origin of $u_-$ then $a_{p,2q'}=0$, for every $p,q'\in \bN$, and  the coefficients $a_{p,q}$ must  satisfy the equalities
\bea\label{vincoli}
c_{\mu+j,\tau, \der}(\cG_\tau^\sharp\Fo)
&=\left(\frac{d}{dt}\right)^{\der}_{\big|_{t=0^+}}\cG_\tau^\sharp \Fo\big(t^2, jt \big)
\\
&=\left(\frac{d}{dt}\right)^{\der}_{\big|_{t=0}} \left(\sum_{p,q}\frac{a_{p,q}}{p!q!}\,\,t^{2p+q}\,j^q\right)\qquad \qquad j=0,1,2,\ldots,\mu
\\
&=\sum_{2p+q=\der}\frac{(2p+q)!}{p!q!}\,a_{p,q}\, j^q.
\eea
Note that  
$$
\cG_\tau^\sharp \Fo\big(\xi_1,0\big)=u_-(\xi_1,0)=0\qquad \forall \xi_1\geq 0,
$$
so the equality in \eqref{vincoli}   for $j=0$ is trivial.
Moreover, 
$$
c_{\mu+j,\tau,2\der'}(\cG_\tau\Fo)=\left(\frac{d}{dt}\right)^{2\der'}_{\big|_{t=0^+}}\cG_\tau^\sharp \Fo\big(t^2, jt \big)=0
\qquad \forall t\geq 0, \quad \forall \der'\geq 0,
$$
so  the equalities~\eqref{vincoli}
 are trivial when $\der$ is even.  Hence we  write $\der=2m'+1$, $q=2q'+1$ and $p=m'-q'$ and we obtain an $\mu \times ( m'+1)$ linear system  of the form
$$
c_{j,2\der'+1}=\sum_{q'=0}^{\der'}(j^2)^{q'}\, x_{q'} \qquad j=1,2,\dots \mu
$$
 where 
 $$
 c_{j,2\der'+1}=j^{-1}\,c_{\mu+j,\tau, 2\der'+1}(\cG_\tau\Fo)
 $$
 and
 $$
 x_{q'} =\frac{(2m'+1)!}{ (m'-q')! (2q'+1)!}a_{m'-q',2q'+1}.
 $$ 
We now apply   similar arguments to the even part $\Fe\in \Smatr$. If
$$
u_+\sim
\sum_{p,q}\frac{a_{p,q}}{p!q!}\,\,\xi_1^p\,\xi_2^q
$$
is the Maclaurin expansion of $u_+$ then  
$
a_{p,2q'+1}=0
$
  for every $p, q'\in \bN$
and the coefficients $a_{p,q}$ must  satisfy the equalities
\be\label{vincoliPari}
c_{\mu+j,\tau, \der}(\cG_\tau\Fe)
=\sum_{2p+q=\der}\frac{(2p+q)!}{p!q!}\,a_{p,q}\, j^q \qquad \qquad j=0,1,2,\ldots,\mu\ .
\ee
Note that if $m$ is odd and $m=2p+q$, then $q$ is odd and $a_{p,q}=0$. Moreover 
$$
c_{\mu+j,\tau,2\der'+1}(\cG_\tau\Fo)=\left(\frac{d}{dt}\right)^{2\der'+1}_{\big|_{t=0^+}}\cG_\tau^\sharp \Fo\big(t^2, jt \big)=0
\qquad \forall t\geq 0, \quad \forall \der'\geq 0.
$$
Hence the previous equalities~\pref{vincoliPari}
 are trivial when $\der$ is odd. 
 Thus we consider $m=2m'$, with $m'\in \bN$ and we observe that  the equality in \eqref{vincoliPari} for $j=0$ reduces to 
$$
a_{m',0}=\frac{(m')!}{(2m')!} \,c_{\mu,\tau, \der}(\cG_\tau\Fe).
$$ 
Therefore we are led to solve the $\mu \times ( m'+1)$  linear system 
$$
c_{j,2m'}=\sum_{q'=0}^{m'}(j^2)^{q'}\, x_{q'} \qquad j=1,2,\dots \mu
$$
 where 
 $$
 c_{j,2m'}=c_{\mu+j,\tau,2\der'}(\cG_\tau\Fo)
 $$
 and
 $$
 x_{q'} =\frac{(2m')!}{ (m'-q')! (2q')!}a_{m'-q',2q'}.
 $$
It is now possible to follow the lines of the proof given for $n=4$ in order to get the desired estimate. 
Note that  the coefficients of the matrix $\Lambda$ are $\Lambda _{j,q}=(j^2)^q$ and an easy computation gives 
 $$
\prod_{\substack{j=1 \\ j\not= s}}^{m'+1} \frac{j^2}{|j^2-s^2|}=2\, \frac{\binom{m'+1}{ s}}{\binom{m'+1+s}{ s}}\leq 1
\qquad s=1,2,\ldots m'+1
$$
which allows us to  obtain the required estimates in the Vandermonde determinants.
Thus  the  proof of Proposition~\ref{taylor} is complete.

\subsection{
Conclusion of the Proof of Theorem~\ref{main}
}\label{s:conclusion}
\quad

\medskip
 
In this section we prove condition (S') of Theorem~\ref{rendiconti} adapting the arguments in  \cite{ADR4}.
We include here the proofs for reader's convenience and we also    
 correct some inaccuracies  in the proof of~\cite[Proposition~7.7]{ADR4}.

We use a Borel argument to produce a smooth function with given Taylor's coefficients at the origin and with controlled Schwartz norms up to some fixed order.
  Indeed, we observed in the proof of Proposition~\ref{taylor} that $c_{s,\tau, \der}(\cG_\tau f_\tau)$ are related to Maclaurin's coefficients of any extension $u$ of $\cG_\tau f_\tau$ and in the same proposition we gave estimates for these coefficients. The following 
  lemma is  analogous to Lemma 7.6 in  ~\cite{ADR4} and completes the   proof of item (i) in the Introduction. 

 \begin{lemma}\label{jet-extension}
Let  $f$ be in $\cS(G)^{{\rm Int}(K)}$ and $M$ in $\bN$. Then, for every $\tau\in \widehat K$  there exists $h_{\tau,M}$ in  $C_c^\infty(\bR^2)$ such that
$$
c_{s,\tau,\der}(\cG_\tau f_\tau)=c_{s,\tau,\der}(\left(h_{\tau,M}\right)_{|_{\Sigma_\cD^\tau}})
\qquad s=0,1,\ldots a_\tau \quad  \forall\der\geq 0
$$
and $\|h_{\tau,M}\|_{(M)}$ is rapidly decaying in $\tau$.
\end{lemma}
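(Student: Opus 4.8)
The plan is to obtain $h_{\tau,M}$ by a quantitative Borel construction, with the coefficient bounds of Proposition~\ref{taylor} as the essential input; no new linear systems are needed, since the solvability issue has already been bypassed through Corollary~\ref{estensioneGn}. First, for each $\tau$ I would fix a Schwartz extension $u_\tau\in\cS(\bR^2)$ of $\cG_\tau f_\tau$ (Corollary~\ref{estensioneGn}, via $A_\tau$) and let $s_\tau(\xi_1,\xi_2)=\sum_{p,q}\frac{a^\tau_{p,q}}{p!q!}\,\xi_1^p\xi_2^q$, with $a^\tau_{p,q}=\partial_1^p\partial_2^q u_\tau(0,0)$, be its Taylor expansion at the origin. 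By Proposition~\ref{spettro} each curve $t\mapsto\bigl(t^{2/\undue},\aut{s}t\bigr)$, $t\ge0$, lies in $\Sigma^\tau_\cD$, so $t\mapsto u_\tau\bigl(t^{2/\undue},\aut{s}t\bigr)$ agrees for $t>0$ with $t\mapsto\cG_\tau f_\tau\bigl(t^{2/\undue},\aut{s}t\bigr)$; hence $c_{s,\tau,\der}(u_\tau)=c_{s,\tau,\der}(\cG_\tau f_\tau)$ for all $s,\der$, and, as the computation in the proof of Proposition~\ref{taylor} shows, $c_{s,\tau,\der}(\cG_\tau f_\tau)$ is a fixed linear function of the Taylor coefficients of $u_\tau$ of order $\le\der$, so it depends on $u_\tau$ only through its jet at the origin. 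Therefore it suffices to produce $h_{\tau,M}\in C_c^\infty(\bR^2)$ with the same $\infty$-jet at $0$ as $u_\tau$ and with $\|h_{\tau,M}\|_{(M)}$ rapidly $\tau$-decaying.

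For the size of the coefficients, Proposition~\ref{taylor} gives $\bigl|\binom{\der}{q}a^\tau_{\der-q,q}\bigr|\le C_\der\,d_\tau^\der\,\|f_\tau\|_{(N_\der)}$, with $\{C_\der\},\{N_\der\}$ increasing and independent of $\tau$, and, $f$ being a Schwartz function, $\|f_\tau\|_{(N)}$ is rapidly decaying in $\tau$ for each fixed $N$ (see~\cite{ADR3}). Writing $P^\tau_\der$ for the degree-$\der$ homogeneous part of $s_\tau$, I would record that for $\der\le M$ the coefficients of $P^\tau_\der$ are bounded by $C_M\,d_\tau^M\,\|f_\tau\|_{(N_M)}$, which is rapidly $\tau$-decaying, since $d_\tau$ is dominated by a power of $(1+|\xi'_\tau|)$.

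Now fix $\chi\in C_c^\infty(\bR^2)$ with $\chi\equiv1$ near $0$ and $\supp\chi\subset\{|\xi|<1\}$, and set $h_{\tau,M}(\xi)=\sum_{\der\ge0}\chi(t_{\tau,\der}\,\xi)\,P^\tau_\der(\xi)$ with scales $t_{\tau,\der}\ge1$ chosen thus: $t_{\tau,\der}=1$ for $\der\le M$, while for $\der>M$ take $t_{\tau,\der}$ large enough that $\|\chi(t_{\tau,\der}\,\cdot)\,P^\tau_\der\|_{C^{\der-1}}\le 2^{-\der}\min\bigl(1,\|f_\tau\|_{(N_M)}\bigr)$ (replacing the $\der$-th term by $0$ when $f_\tau=0$). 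This is possible because, for a degree-$\der$ homogeneous polynomial $P$ and $|\beta|\le\der-1$, differentiating $\chi(t\,\cdot)\,P$ costs a factor $t$ each time it lands on $\chi$, while each monomial of $P$ carries $\der-|\beta|\ge1$ spare powers of $|\xi|\le1/t$, so $\|\chi(t\,\cdot)\,P\|_{C^{\der-1}}=O(t^{-1})\to0$. The first bound is the usual Borel requirement, so the series converges in every $C^j$, $h_{\tau,M}\in C_c^\infty(\bR^2)$ is supported in $\{|\xi|<1\}$, and $\partial^\alpha h_{\tau,M}(0)=a^\tau_\alpha$ for all $\alpha$; thus $h_{\tau,M}$ has the same jet at $0$ as $u_\tau$, whence $c_{s,\tau,\der}\bigl((h_{\tau,M})_{|_{\Sigma^\tau_\cD}}\bigr)=c_{s,\tau,\der}(\cG_\tau f_\tau)$ for all $s,\der$. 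Finally, since $(1+|\cdot|)^M$ is bounded on $\{|\xi|<1\}$,
\[
\|h_{\tau,M}\|_{(M)}\le C\sum_{\der=0}^{M}\|\chi\,P^\tau_\der\|_{C^M}+C\sum_{\der>M}\|\chi(t_{\tau,\der}\,\cdot)\,P^\tau_\der\|_{C^M}\le C'\,d_\tau^M\,\|f_\tau\|_{(N_M)}+C'\,\|f_\tau\|_{(N_M)},
\]
which is rapidly decaying in $\tau$, as required.

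I expect the genuine difficulty to lie entirely in Proposition~\ref{taylor} (and ultimately in Corollary~\ref{estensioneGn}), whose estimates are used here as a black box. Within the present lemma the only delicate point is the split of the Borel series at $\der=M$: the low-order terms must be kept rigid ($t_{\tau,\der}=1$) so that their $C^M$-norms are governed directly by the coefficient bounds of Proposition~\ref{taylor}, whereas for $\der>M$ — where we differentiate fewer times than the vanishing order of the $\der$-th term — the cutoff scales may be pushed to infinity to make the tail $O(\|f_\tau\|_{(N_M)})$, hence rapidly $\tau$-decaying, without disturbing the jet. One should also keep track, as above, that the polynomial factor $d_\tau^M$ is absorbed by the rapid decay of $\|f_\tau\|_{(N_M)}$.
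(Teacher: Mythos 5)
Your proof is correct and follows essentially the same quantitative Borel construction as the paper: you center degree-$\der$ homogeneous polynomial pieces (determined by Proposition~\ref{taylor}) inside shrinking cutoffs, keep the first $M$ scales frozen so the low-order $C^M$-norms are controlled by the coefficient bounds, and push the scales for $\der>M$ to make the tail small in $C^{\der-1}$; your $t_{\tau,\der}$ plays exactly the role of the paper's $\eps_\der^{-1}$. The only cosmetic difference is the choice of tail target ($2^{-\der}\min(1,\|f_\tau\|_{(N_M)})$ instead of the paper's $\eps_\der=\tfrac1{d_\tau!}(1+\alpha_\der\|f_\tau\|_{(N_\der)})^{-1}$), but both yield rapid $\tau$-decay.
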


\begin{proof}

Let $f\in \cS(G)^{{\rm Int}(K)}$,  $M\in \bN$. Let 
$\ph\in C^\infty_c(\bR^2)$  supported in  $\{\xi\in \bR^2\,\,:\,\, |\xi|\le1\}$ and  equal to 1 for $|\xi|\le \frac12$.
For any integers $\der,q$ with $0\le q\le \der$ let
$$
\psi_{\der,q}(\xi_1,\xi_2)=\ph(\xi_1,\xi_2)\,\xi_1^{\der-q}\xi_2^q\ .
$$
Fix $\tau\in \widehat K$ and let  $u_\tau$ be a smooth extension of $\cG_\tau f_\tau$ to $\bR^2$. 
Following the standard Borel's Lemma procedure, define
\[
h_\der(\xi)= \frac1{\der!}\sum_{q=0}^\der \binom{\der}{q}  \partial_1^{\der-q}\partial_2^q u_\tau(0)\,\eps_\der^\der\,\psi_{\der,q}(\xi/\eps_\der)
\]
and
\begin{equation}\label{borel}
h_{\tau,M}(\xi)=\sum_{\der\in\bN}h_\der(\xi)\ ,
\end{equation}
where   the coefficients $\eps_\der\in(0,1]$ will be   chosen afterwards so that the series in~\eqref{borel} converges normally in every $C^N$-norm and $\|h_{\tau,M}\|_{(M)}$   decays rapidly in $\tau$.

By Proposition \ref{taylor}, for every $N\in\bN$, 
$$
\|h_\der\|_{C^N}\le \frac{d_\tau^\der}{\der!}\,\eps_\der^{\der-N}\,  C_\der\,\|f_\tau\|_{(N_\der)}\,\sum_{q=0}^\der\|\psi_{\der,q}\|_{C^N}\ ,
$$
where   the  sequences $\{C_{\der}\}$ and $\{N_{\der}\}$ are independent of $\tau$ and    increasing.

With $\al_\der=C_\der\sum_{q=0}^\der\|\psi_{\der,q}\|_{C^{\der-1}}$, we choose
$$
\eps_\der=\eps_{\der,\tau,M}=\begin{cases}1&\text{ if }\der\leq M\\
\frac1{d_\tau!}\frac1{(1+\al_\der\|f_\tau\|_{(N_\der)})}&\text{ if }\der> M\ . \end{cases}
$$

Then
$$
\sum_{\der> M}\|h_\der\|_{C^{\der-1}}
\le \frac1{d_\tau!}\sum_{\der\ge M}  \frac{d_\tau^m}{\der!}\le \frac{e^{d_\tau}}{d_\tau!} .
$$ 

This implies that the series $\sum_{\der\in\bN}\|h_\der \|_{C^k}$ converges for every $k$, so that $h_{\tau,M}$ is smooth.
Moreover, since the sequence $\{N_{\der}\}$ is  increasing,
then, for $N\leq  M$, 
$$
\sum_{\der\leq M}\|h_\der\|_{C^N}
\le \sum_{\der\leq M}\|h_\der\|_{C^{M}}
\le  \sum_{\der\leq M}\frac{d_\tau^\der}{\der!}\,  C_\der\,\|f_\tau\|_{(N_\der)}\,\sum_{q=0}^\der\|\psi_{\der,q}\|_{C^M}
\le s_M\, d_\tau^M\,\|f_\tau\|_{(N_M)} ,
$$
where 
$\displaystyle s_M=\sum_{d\le M}\frac{ C_\der}{\der!}\,  \sum_{q=0}^\der\|\psi_{\der,q}\|_{C^M}$.
 Hence, since $\|f_\tau\|_{(N_M)}$ is rapidly decreasing in $\tau$,
  the function $h_{\tau, M}$ defined in \eqref{borel} with $\eps_\der=\eps_{\der,\tau,M}$  satisfies our requirements.
\end{proof}

The following proposition is     analogous to Proposition 7.7 in  ~\cite{ADR4} and gives  (ii) in the introduction.
\begin{proposition}\label{p:nulla}
Let $f_\tau\in \Stau$ such that 
$$
c_{s,\tau, \der}(\cG_\tau f_\tau)=0\qquad \forall \der\in \bN \quad \forall s=0,1,\ldots a_\tau.
$$
Then $\cG_\tau f_\tau$ can be extended to a Schwartz function~$v_\tau$ satisfying the following norm bounds:
  for every $N\in \bN$ there exist constants $M_N\in \bN$ and $C_{N}>0$,  independent of $\tau$,\ 
such that 
$$
\|v_\tau\|_{(N)}\leq C_{N}\, \|f_\tau\|_{(M_N)}.
$$
\end{proposition}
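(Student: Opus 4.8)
The plan is to exploit the hypothesis twice: first to upgrade ``$\cG_\tau f_\tau$ vanishes to infinite order at the origin along $\Sigma^\tau_\cD$'' into \emph{quantitative}, $\tau$-uniform flatness estimates for the branch profiles, and then to build $v_\tau$ by patching one-variable profiles along the branches of $\Sigma^\tau_\cD$, the patching near the origin being rendered harmless precisely by that flatness. For the first part, set $F=A_\tau f_\tau$ and, for $s=0,\dots,\finetau$, let $\phi_s(\rho)=\cG_\tau^\sharp F(\rho^2,\aut{s}\rho^\undue)=b_{s,\tau}(\widehat F,\rho)$ be the profile of $\cG_\tau^\sharp F$ along the branch $\Gamma_s$; the hypothesis says each $\phi_s$ is flat at $\rho=0$. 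Since the operators in $\cD$ commute and $\cG_\tau\big(P(D_1,\dots,D_\ell)f_\tau\big)=P\cdot\cG_\tau f_\tau$ on $\Sigma^\tau_\cD$, Lemma~\ref{Gderivatives} applies to $P(D)f_\tau$ for every polynomial $P$; choosing a suitable $P$ with $P(\xi)\ge\langle\xi\rangle^{N}$ and combining with Taylor's formula with integral remainder gives, for all $N,L,j\in\bN$, constants $C,M$ independent of $\tau$ with
\[
\Big|\big(\tfrac{d}{d\rho}\big)^{j}\big[\langle(\rho^2,\aut{s}\rho^\undue)\rangle^{N}\phi_s(\rho)\big]\Big|\le C\,\rho^{L}\,\|f_\tau\|_{(M)},\qquad \rho\ge0,\ 0\le s\le\finetau;
\]
the weight is what absorbs the $\tau$-growth of $\aut{s}$. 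Moreover the Plancherel formula for $(G,K,\tau)$, together with $\|\widehat F(\rho\base)\|^2_{\rm HS}=\sum_s d_{\sigma_s}|\phi_s(\rho)|^2$, gives $\sum_s d_{\sigma_s}\int_0^\infty|\phi_s(\rho)|^2\rho^{n-1}\,d\rho\lesssim\|f_\tau\|_{L^2}^2$, and similarly for $P(D)f_\tau$; this controls the $\ell^2$-aggregate of the profiles with \emph{no} factor growing with the number $\finetau+1\sim d_\tau$ of branches, which is what makes the $L^2$ parts of the Schwartz norms behave.

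\emph{Away from the origin.} Fix $\delta>0$. On $\{|\xi|\ge\delta\}$ the branches $\Gamma_s$ are mutually separated by a $\tau$-independent positive distance (depending on $\delta$): the $\aut{s}$ are distinct integers, and when $n=3$ the half-parabolas $\xi_2^2=\aut{s}^2\xi_1$ fan apart. One can therefore choose, uniformly in $\tau$ and $s$, cutoffs $\psi_s$ with pairwise disjoint supports, equal to $1$ near $\Gamma_s\cap\{|\xi|\ge2\delta\}$, with $\|\psi_s\|_{C^k}$ bounded independently of $\tau$. Letting $\Phi_s$ be the trivial two-variable extension of a flat-at-$0$ Schwartz extension of $\phi_s$, one puts $w^{\mathrm{far}}_\tau=\sum_s\psi_s\Phi_s$: disjointness of supports removes the $d_\tau$-factor from the sup bounds and the $\ell^2$-aggregate estimate above removes it from the $L^2$ bounds, so $w^{\mathrm{far}}_\tau$ is Schwartz with $\tau$-uniformly controlled norms, and $w^{\mathrm{far}}_\tau$ restricts to $\cG_\tau^\sharp F$ on $\Sigma^\tau_\cD\cap\{|\xi|\ge 2\delta\}$.

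\emph{Near the origin.} Here the branches are pinched at the origin --- rays through $0$ when $n=4$, mutually tangent half-parabolas when $n=3$ --- so a direct patching fails; instead one separates them in a homogeneous \emph{angular} variable ($w=\xi_2/\xi_1$ for $n=4$; an analogous coordinate for $n=3$, where the parabolic tangency also forces one to use the even/odd decomposition of Corollary~\ref{Phi3} and to work with the spectra of $\Fe$ and $\Fo$ separately). In this variable the branches lie on the parallel hyperplanes $\{w=\aut{s}\}$ and can be localized by fixed-shape cutoffs; such cutoffs are singular at the origin, but multiplied by the profiles --- whose flatness is available to a \emph{fixed} ($N$-dependent, $\tau$-independent) order by the estimate above --- the product extends smoothly, and once the residual powers of $\aut{s}$ are absorbed into the weighted profile bounds the resulting $w^{\mathrm{near}}_\tau$ has all Schwartz norms $\le C_N\|f_\tau\|_{(M_N)}$ uniformly in $\tau$, with an extra factor that shrinks as $\delta\to0$. (Equivalently one may use a Lagrange-interpolation formula through the branches, rewritten after the node shift of Lemma~\ref{traslazione} when $n=4$ so that the interpolation weights stay $\le1$ uniformly in $\tau$.) Finally one sets $v_\tau=\chi\,w^{\mathrm{near}}_\tau+(1-\chi)w^{\mathrm{far}}_\tau$ with a fixed radial cutoff $\chi\equiv1$ near $0$ supported in $\{|\xi|\le2\delta\}$: on $\Sigma^\tau_\cD$ both pieces restrict to $\cG_\tau^\sharp F$ on the regions where they are used, so $v_\tau$ extends $\cG_\tau f_\tau$ and satisfies the required bound after passing back from $F$ to $f_\tau$ via Corollary~\ref{isoAtau}.

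The main obstacle is the near-origin step: obtaining $\tau$-uniformity there despite both the number of branches and the slopes $\aut{s}$ growing with $\tau$. The mechanism --- a \emph{fixed} amount of flatness suffices once the data are organized in the right shifted/homogeneous coordinates, while the $\aut{s}$-growth is paid for by the weighted profile bounds --- is exactly where the cases $n=3$ and $n=4$ diverge and where the argument of \cite[Prop.~7.7]{ADR4} needs to be corrected.
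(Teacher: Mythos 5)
Your core mechanism is the right one and is essentially the paper's: localize to the branches with cutoffs in the homogeneous angular variable $\xi_2/\xi_1^{\undue/2}$, trade powers of $\aut{s}$ and of $\xi_1$ for applications of $\DD_\tau$ and $\Delta$ to $f_\tau$, absorb negative powers of $\xi_1$ by the infinite-order flatness at the origin, and close everything with Lemma~\ref{Gderivatives}. What the paper actually does is simpler in packaging: a single global formula $v_\tau(\xi)=\sum_s \cG_\tau f_\tau(\xi_1^{2/\undue},\aut{s}\xi_1)\,\eta(\xi_2/\xi_1^{\undue/2}-\aut{s})$ for $\xi_1>0$ (and $0$ for $\xi_1\le0$), with no near/far split, no Plancherel argument, and no recourse to Lemma~\ref{traslazione} or to the even/odd decomposition of Corollary~\ref{Phi3} (those are used only in Proposition~\ref{taylor}). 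Your extra structure is not harmful but is not needed.

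There is, however, one genuine error in the far-zone part. You assert that on $\{|\xi|\ge\delta\}$ the branches $\Gamma_s$ are ``mutually separated by a $\tau$-independent positive distance,'' so that disjointly supported cutoffs $\psi_s$ with $\tau$-independent $C^k$ norms exist. This is false for both $n=3$ and $n=4$. For $n=4$ the rays $\xi_2=\aut{s}\xi_1$ with consecutive slopes differing by $1$ have angular gap $\approx(1+\aut{s}^2)^{-1}$, hence orthogonal separation at radius $\delta$ that goes to $0$ as $|\aut{s}|\to\infty$; for $n=3$ the half-parabolas $\xi_2=\aut{s}\sqrt{\xi_1}$ likewise crowd together at fixed $|\xi|$ as $|\aut{s}|$ grows, since $\xi_1\approx(\delta/\aut{s})^2$ there. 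So one cannot set aside the ``angular cutoffs are singular'' difficulty to the near-origin region: the cutoffs must be angular everywhere, their $C^k$ norms blow up like $\xi_1^{-k}$, and on $\{|\xi|\ge\delta\}$ one still has $\xi_1\to0$ along the high-slope branches. The paper's argument handles this uniformly, absorbing the $\xi_1^{-k}$ and the $(1+|\xi|)^N$ weight simultaneously by rewriting $\xi_1^{2\beta/\undue}(\aut{s}\xi_1)^\gamma\,\cG_\tau f_\tau=\cG_\tau(\Delta^\beta\DD_\tau^\gamma f_\tau)$ and then using flatness for the remaining negative powers of $\xi_1$. With that correction your proof would go through, but the near/far dichotomy as you set it up does not.
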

\begin{proof} 
Let   $\eta$ be a bump function in $ C^\infty_c(\bR)$ supported in  $\big(-\frac12, \frac12\big)$ and equal to $1$ in a neighbourhood of the origin     Let $f_\tau\in \Stau$ and define   $v_\tau$ on $\bR^2$ by the rule
$$
v_\tau(\xi_1,\xi_2)=
\begin{cases}
\displaystyle \sum_{s=0}^{\finetau}  \cG_\tau f_\tau(\xi_1^{2/\undue},\aut{s} \xi_1)\, \eta_s \left(\tfrac{\xi_2}{\xi_1^{\undue/2}}\right)&\xi_1>0
\\
0&\xi_1\leq 0.
\end{cases}
$$
 where
 $$\eta_s(x)=\eta\left(x-\aut{s}\right), \qquad \forall x\in \bR\quad s=0,1,\cdots, \finetau \ .$$

It is straightforward to show that $v_\tau$ extends $ \cG_\tau f_\tau$ to $\bR^2$. We now show that $v_\tau$ is smooth and that for every $N\in \bN$ there exist constants $M_N\in \bN$ and $C_{N}>0$,  independent of $\tau$,
such that 
$$
\|v_\tau\|_{(N)}\leq C_{N}\, \|f_\tau\|_{(M_N)}.
$$

One can check by induction that, for appropriate coefficients $a_{r,h,q}$ 
$$
\partial^h_{\xi_1}\left(\xi_1^{-q\undue/2}\, \eta_s^{(q)}  \left(\tfrac{\xi_2}{\xi_1^{\undue/2}}\right)\right)
=\xi_1^{-q\undue/2-h}\,\sum_{r=0}^h a_{r,h,q}\,\left(\tfrac{\xi_2}{\xi_1^{\undue/2}}\right) ^r \eta_s^{(q+r)} \left(\tfrac{\xi_2}{\xi_1^{\undue/2}}\right).
$$
Now   fix $(\xi_1,\xi_2)$ with $\xi_1>0$ close enough to the spectrum so that there exists   $s$  such that 
$
\left|\frac{\xi_2}{\xi_1^{\undue/2}}- \aut{s}\right|<\frac12.
$
Then the sum defining $v_\tau$  reduces to a single term   and
\beas
\partial^p_{\xi_1}\partial^q_{\xi_2}v_\tau(\xi_1,\xi_2)
&=
 \sum_{h=0}^p\binom{p}{h}  
\partial_{\xi_1}^{p-h}\left(\cG_\tau f_\tau(\xi_1^{2/\undue},\aut{s}\xi_1)\right)\,\partial^h_{\xi_1}\left(\xi_1^{-q\undue/2}\, \eta_s^{(q)}  \left(\tfrac{\xi_2}{\xi_1^{\undue/2}}\right)\right).
 \\ &=
\sum_{h=0}^p  \sum_{r=0}^h \binom{p}{h}a_{r,h,q}\,\xi_1^{-q\undue/2-h}\,\left(\tfrac{\xi_2}{\xi_1^{\undue/2}}\right) ^r\,\,
\partial_{\xi_1}^{p-h}\left(\cG_\tau f_\tau(\xi_1^{2/\undue},\aut{s}\xi_1)\right)\,\, \eta_s^{(q+r)} \left(\tfrac{\xi_2}{\xi_1^{\undue/2}}\right) .
\eeas
Therefore we need to estimate 
\be\label{termine}
(1+|\xi_1|+|\xi_2|)^N\,\,
\xi_1^{-q\undue/2-h}\,\left|\tfrac{\xi_2}{\xi_1^{\undue/2}}\right| ^r\,\,
\partial_{\xi_1}^{p-h}\left(\cG_\tau f_\tau(\xi_1^{2/\undue},\aut{s}\xi_1)\right)\ ,
\ee
for every integer $N\geq 0$  when $
\left|\frac{\xi_2}{\xi_1^{\undue/2}}- \aut{s}\right|<\frac12.$

Note that
$$
|\xi_2|= |\xi_2-\xi_1^{\undue/2} \aut{s}+\xi_1^{\undue/2} \aut{s}|\leq|\xi_1^{\undue/2}|  \left( \tfrac12+|\aut{s}|\right)
$$
Thus the  expression in~\eqref{termine} can be controlled by a linear combination, independent of $\tau$, of terms 
of the type
 $$
\xi_1^{-\alpha} \,\partial_{\xi_1}^\ell \Big(
 {\xi_1}^{2\beta/\undue}\,( \xi_1\,\aut{s})^\gamma  \cG_\tau f_\tau(\xi_1^{2/\undue},\aut{s}\xi_1)\,\,
 \Big)=
 \xi_1^{-\alpha} \, \partial_{\xi_1}^\ell \Big(\cG_\tau (\Delta^\beta\,\mathbf D_\tau^\gamma f_\tau)(\xi_1^{2/\undue},  \aut{s}\xi_1)\Big)
$$
where $\alpha,\beta,\gamma,\ell$ are non negative integers.

Since $g(\xi_1)=\cG_\tau (\Delta^\beta\,\mathbf D_\tau^\gamma f)(\xi_1^{2/\undue},  \aut{s}\xi_1)$ vanishes of infinite 
order at the origin,
 for any integer $q\geq 0$ there exists $\theta \in (0,1)$
  such that
 for any $\xi_1> 0$
\[
\xi_1^{-\alpha}\, g^{(\ell)}(\xi_1)=\frac1{\ell!}\,
 g^{(\ell+\alpha)}(\theta \xi_1) .
\]
Therefore, in view of  Lemma~\ref{Gderivatives},  $v_\tau\in C^\infty(\bR^2)$ and the required norm estimates follow.
\end{proof}

Finally we collect all our results in order to produce an extension with norms rapidly decaying in $\tau$, so proving the Schwartz correspondence for $\big(M_n(\bR),SO_n\big)$ according to Theorem~\ref{rendiconti}. Given $f$ in $\cS(G)^{{\rm Int}(K)}$ and $\tau$ in $ \widehat K$, we have $\cG  f_\tau(\xi'_\tau,\xi'') =\cG_{\tau }  f_\tau (\xi'') $, hence condition (S') is equivalent to the following.

\begin{proposition}\label{S'}
Let  $f$ be in $\cS(G)^{{\rm Int}(K)}$ and $N$ in $\bN$. Then, for every $\tau\in \widehat K$, $\cG_{\tau }  f_\tau $
admits a Schwartz extension $u_{\tau,N}$ from $\Sigma_\cD^\tau$ to $\bR^2$ such that $\|u_{\tau,N}\|_{(N)}$ is rapidly decaying in~$\tau$.
\end{proposition}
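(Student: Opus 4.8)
The plan is to decompose $\cG_\tau f_\tau$ as the sum of a compactly supported piece that carries its whole Taylor jet at the origin along the curves making up $\Sigma_\cD^\tau$, plus a remainder vanishing to infinite order at the origin, to which Proposition~\ref{p:nulla} applies. The orders of the Schwartz norms must be fixed in the correct sequence. Given $N\in\bN$, I would first read off from Proposition~\ref{p:nulla} the integer $M_N$, then apply Corollary~\ref{nucleo-tipo-tau} with its index equal to $M_N$ to obtain $P:=M(M_N)$ and $P':=M'(M_N)$, and finally set $M:=\max\{N,M_N,P\}$.

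With this $M$, Lemma~\ref{jet-extension} furnishes, for each $\tau$, a function $h_{\tau,M}\in C^\infty_c(\bR^2)$ with $c_{s,\tau,\der}(\cG_\tau f_\tau)=c_{s,\tau,\der}\big((h_{\tau,M})_{|_{\Sigma_\cD^\tau}}\big)$ for all $s,\der$ and with $\|h_{\tau,M}\|_{(M)}$ rapidly decaying in $\tau$. Since $(h_{\tau,M})_{|_{\Sigma_\cD^\tau}}\in\cS(\Sigma_\cD^\tau)$, Corollary~\ref{nucleo-tipo-tau} gives $\cG_\tau^{-1}\big((h_{\tau,M})_{|_{\Sigma_\cD^\tau}}\big)\in\Stau$; hence $f'_\tau:=f_\tau-\cG_\tau^{-1}\big((h_{\tau,M})_{|_{\Sigma_\cD^\tau}}\big)$ lies in $\Stau$ and $\cG_\tau f'_\tau=\cG_\tau f_\tau-(h_{\tau,M})_{|_{\Sigma_\cD^\tau}}$ satisfies $c_{s,\tau,\der}(\cG_\tau f'_\tau)=0$ for all $s,\der$. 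Proposition~\ref{p:nulla} then yields a Schwartz extension $v_\tau$ of $\cG_\tau f'_\tau$ with $\|v_\tau\|_{(N)}\le C_N\|f'_\tau\|_{(M_N)}$. Setting $u_{\tau,N}:=v_\tau+h_{\tau,M}\in\cS(\bR^2)$, its restriction to $\Sigma_\cD^\tau$ equals $\cG_\tau f'_\tau+(h_{\tau,M})_{|_{\Sigma_\cD^\tau}}=\cG_\tau f_\tau$, so $u_{\tau,N}$ is the desired extension.

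There remains the norm bound. Since $M\ge N$, the triangle inequality gives $\|u_{\tau,N}\|_{(N)}\le\|v_\tau\|_{(N)}+\|h_{\tau,M}\|_{(M)}\le C_N\|f'_\tau\|_{(M_N)}+\|h_{\tau,M}\|_{(M)}$. For $\|f'_\tau\|_{(M_N)}$ I would use the triangle inequality once more together with Corollary~\ref{nucleo-tipo-tau} and $\|(h_{\tau,M})_{|_{\Sigma_\cD^\tau}}\|_{(P)}\le\|h_{\tau,M}\|_{(P)}\le\|h_{\tau,M}\|_{(M)}$, obtaining $\|f'_\tau\|_{(M_N)}\le\|f_\tau\|_{(M_N)}+C_{M_N}(1+|\xi'_\tau|)^{P'}\|h_{\tau,M}\|_{(M)}$. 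Here $\|f_\tau\|_{(M_N)}$ is rapidly decaying in $\tau$ because $f\in\cS(G)^{{\rm Int}(K)}$, $\|h_{\tau,M}\|_{(M)}$ is rapidly decaying by Lemma~\ref{jet-extension}, and $(1+|\xi'_\tau|)^{P'}$ grows only polynomially in $\tau$; since a polynomially growing sequence times a rapidly decaying one is rapidly decaying, and finite sums of rapidly decaying sequences are such, $\|u_{\tau,N}\|_{(N)}$ is rapidly decaying in $\tau$. By Theorem~\ref{rendiconti} this proves property~(S), hence Theorem~\ref{main}.

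I do not expect a genuine obstacle here: the analytic substance has been concentrated in Propositions~\ref{taylor} and~\ref{p:nulla} and in Lemma~\ref{jet-extension}. The single delicate point — and the place where the argument of \cite{ADR4} needs correcting — is the bookkeeping of the chain of orders $N\rightsquigarrow M_N\rightsquigarrow M(M_N)$ and the check that the polynomial factor $(1+|\xi'_\tau|)^{M'(M_N)}$ produced by the continuity of $\cG_\tau^{-1}$, which is not uniform in $\tau$, does not spoil the rapid decay carried by $\|h_{\tau,M}\|_{(M)}$.
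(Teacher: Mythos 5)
Your proposal is correct and follows essentially the same route as the paper: in both, one takes $h_{\tau,M}$ from Lemma~\ref{jet-extension} with $M$ chosen large enough, pulls its restriction back into $\Stau$ via Corollary~\ref{nucleo-tipo-tau}, applies Proposition~\ref{p:nulla} to the difference $f_\tau-\cG_\tau^{-1}\big((h_{\tau,M})_{|\Sigma_\cD^\tau}\big)$, and sums the resulting extension with $h_{\tau,M}$, absorbing the polynomially growing factor $(1+|\xi'_\tau|)^{M'}$ from the continuity of $\cG_\tau^{-1}$ into the rapid decay of $\|h_{\tau,M}\|_{(M)}$. Your explicit sequencing $N\rightsquigarrow M_N\rightsquigarrow P=M(M_N)\rightsquigarrow M=\max\{N,M_N,P\}$ matches the paper's choice of $M>\max\{N,N''\}$ (with $N'=M_N$, $N''=M(N')$), just stated more precisely.
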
 
  
\begin{proof} Let $f$ be in  $\cS(G)^{{\rm Int}(K)}$, $\tau\in \widehat K$ and $N\in \bN$.
For $M$ to be chosen afterwards, let $h_{\tau,M}$ be the function associated to $f_\tau$, according to Lemma~\ref{jet-extension}. By Corollary~\ref{nucleo-tipo-tau} there exists  $g_{\tau,M}\in \Stau$ such that $\cG_\tau  g_{\tau,M}=
\left(h_{\tau,M}\right)_{|_{\Sigma_\cD^\tau}}.
$
Since
$$
c_{s,\tau,m}\left(\cG_\tau(f_\tau- g_{\tau,M})\right)=0\qquad \forall \der\in \bN \quad \forall s=0,1,\ldots a_\tau,
$$
by 
Proposition~\ref{p:nulla},here exists an extension $v_{\tau,M}$ of $\cG_\tau(f_\tau- g_{\tau,M})$ and a positive  integer  $N'$, depending only on $N$, 
 such that 
  $$
  \|v_{\tau,M}\|_{(N)}\leq \, C_N\,   \|f_\tau- g_{\tau,M}\|_{(N')}.
$$
By Corollary~\ref{nucleo-tipo-tau}, there exists $N''$, depending on $N'$, such that
$$
 \|g_{\tau,M}\|_{(N')}\leq C_{\tau,N}\|h_{\tau,M}\|_{(N'')}
$$ 
where the constant $C_{\tau,N}$ has polynomial growth in $\tau$.
Choose $M$ bigger than $N$  and $N''$, then
\[
u_{\tau,N}=v_{\tau,M}+h_{\tau,M}
\]
is a Schwartz extension to $\bR^2$ of $\cG_\tau f_\tau$ whose $N$-Schwartz  norm  decays rapidly in $\tau$. 
Indeed,
\beas
\|u_{\tau,N}\|_{(N)}& 
\leq C_N\,   \|f_\tau\|_{(N')}+C_N\,C_{\tau,N}\| h_{\tau,M} \|_{(N'')}+\| h_{\tau,M} \|_{(N)} 
\\ &
\leq C'_{\tau,N} \left( \|f_\tau\|_{(N')}+\| h_{\tau,M} \|_{(M)}\right)
\eeas
where the constant $C'_{\tau,N}$ has polynomial growth in $\tau$ and both  $\|f_\tau \|_{(N')}$ and $\| h_{\tau,M} \|_{(M)} $ decay rapidly in $\tau$.
\end{proof}


\begin{thebibliography}{99}

\newcommand\auth[1]{{\textrm{#1}}}
\newcommand\papertitle[1]{{\textrm {#1}}}
\newcommand\jourtit[1]{{\textit{\frenchspacing#1}}}

\newcommand\journum[1]{{\textbf{#1}}}
\newcommand\booktit[1]{{\textit{#1}}}


%
		\bibitem{ADR1}
		\auth{F.~Astengo, B.~Di Blasio, F.~Ricci},
		\papertitle{Gelfand transforms of polyradial Schwartz functions on the Heisenberg group},
		\jourtit{J. Funct. Anal.}, \journum{251}\,(2007), 772--791. 

		\bibitem{ADR2}
		\auth{F.~Astengo, B.~Di Blasio, F.~Ricci},
		\papertitle{Gelfand pairs on the Heisenberg group and Schwartz functions},
		\jourtit{J. Funct. Anal.}, \journum{256}\,(2009), 1565--1587.
 
		\bibitem{ADR3}
		\auth{F.~Astengo, B.~Di Blasio, F.~Ricci},
		\papertitle{On the Schwartz correspondence for Gelfand pairs of polynomial growth}, 
				\jourtit{Rend. Lincei Mat. Appl}, {\bf 32} (2021), 79--96.
				
            
		\bibitem{ADR4}
		\auth{F.~Astengo, B.~Di Blasio, F.~Ricci},
		\papertitle{Schwartz correspondence for the complex motion group on $\bC^2$} 

\bibitem{BeJeRa1}
                \auth{C. Benson, J. Jenkins, G. Ratcliff},
                \papertitle{On Gelfand pairs associated with solvable Lie groups},
                \jourtit{Trans. Amer. Math. Soc.}, \journum{321}\,(1990), 85--116.



\bibitem{BeJeRa2} \auth{C.~Benson, J. Jenkins, G.~Ratcliff}, 
		\papertitle{The spherical transform of a Schwartz function on the Heisenberg group},
		\jourtit{J.~Funct. Anal.}, \journum{154}\,(1998), 379--423.


		\bibitem{FR}
		\auth{F.~Ferrari Ruffino},
		\papertitle{The topology of the spectrum for Gelfand pairs on Lie groups},
		\jourtit{Boll. Unione Mat. Ital. B}, \journum{10}\,(2007), 569--579.
     
	\bibitem{FiR}
		\auth{V. Fischer, F. Ricci},
		\papertitle{Gelfand transforms of $SO(3)$-invariant Schwartz functions on the free group $N_{3,2}$},
		\jourtit{Ann. Inst. Four. Gren.}, \journum{59}\,(2009), 2143--2168.
     
		\bibitem{FRY1}
		\auth{V. Fischer, F. Ricci, O. Yakimova},
		\papertitle{Nilpotent Gelfand pairs and spherical transforms of Schwartz functions I: rank-one actions on the centre},
		\jourtit{Math. Z.}, \journum{271}\,(2012), 221--255.
     
	\bibitem{FRY2}
		\auth{V. Fischer, F. Ricci, O. Yakimova},
		\papertitle{Nilpotent Gelfand pairs and Schwartz extensions of spherical transforms via quotient pairs},
		\jourtit{J. Funct. Anal.}, \journum{274}\,(2018), 1076--1128.
   
\bibitem{FH}
\auth{W. Fulton, J. Harris}, 
\booktit{Representation Theory. A First Course}, 
Graduate Texts in Math. 129, Springer 1991.

\bibitem{Ge}
\auth{I.M.~Gelfand}, \papertitle{Spherical functions in symmetric Riemann spaces}, \jourtit{Dokl. Akad. Nauk SSSR} \journum{70} (1950), 5--8.


\bibitem{KaRi} 
\auth{A.~Kaplan, F.~Ricci}, \papertitle{Harmonic analysis on groups of Heisenberg type}, in Lecture Notes in Math., 992, Springer, Berlin, 1983, 416--435.


\bibitem{La1}  
\auth{J.~Lauret}, \papertitle{Homogeneous nilmanifolds attached to representations of compact Lie groups} \jourtit{Manuscripta Math.} \journum{99} (1999), 287--309.

\bibitem{La2}  
\auth{J.~Lauret}, \papertitle{Gelfand pairs attached to representations of compact Lie groups} \jourtit{Transform. Groups} \journum{5} (2000), 307--324.

     \bibitem{Mar1}
        \auth{A. Martini},   
\papertitle{Algebras of differential operators on Lie groups and spectral multipliers}, Ph.D. Thesis, Scuola Norm. Sup. Pisa, 2010, arXiv:1007.1119 .

\bibitem{Jet}
\auth{J.~Nestruev}, 
\booktit{Smooth manifolds and observables},
Graduate Texts in Mathematics, 220. Springer-Verlag, New York, 2003.
\bibitem{Nels-Steins}
\auth {E. Nelson, W. Steinspring}    
\papertitle{Representation of elliptic operators in an enveloping algebra}
\jourtit{Amer. J. Math.},
	\journum{81}\, (1959), 547--560.
	
	\bibitem{Ri}
	\auth{F.~Ricci}, \papertitle{Commutative algebras of invariant functions on groups of Heisenberg type},  \jourtit{J. London Math. Soc.} \journum{32} (1985),  265--271.

		\bibitem{RS}
		\auth{F.~Ricci, A.~Samanta},
		\papertitle{Spherical analysis on homogeneous vector bundles},
		\jourtit{Adv. Math.}, \journum{338}\,(2018), 953--990.
   
\bibitem{Vara}
\auth{V.~S.~Varadarajan},  \booktit{\it Lie groups, Lie algebras, and their representations}, Graduate texts in mathematics. Springer-Verlag, New York, 1984.

\bibitem{Vin}
		\auth{E. Vinberg},
		\papertitle{Commutative homogeneous spaces and co-isotropic symplectic actions},
		\jourtit{Russian Math. Surveys}, \journum{56}\,(2001)\, 1-60.
     


\bibitem{W}
		\auth{J. Wolf},
		\booktit{\it Harmonic Analysis on Commutative Spaces}, Mathematical Surveys and Monographs 142, AMS, 2007.


		\bibitem{Y}
		\auth{O. Yakimova},
		\papertitle{Principal Gelfand pairs},
		\jourtit{Transform. Groups}, \journum{11}\,(2006),305--335 .
		


\end{thebibliography}
\end{document}